\documentclass[10pt, a4paper,reqno]{amsart}
\usepackage{amsthm,euscript,colonequals}
\usepackage{mathtools} 
\usepackage{stmaryrd}
\theoremstyle{}
{\theoremstyle{definition}
\newtheorem{dfn}{Definition}[section]}
\newtheorem{prop}[dfn]{Proposition}

\newtheorem{thm}[dfn]{Theorem}
{\theoremstyle{definition}
\newtheorem{rem}[dfn]{Remark}}
\newtheorem{lem}[dfn]{Lemma}
\newtheorem{cor}[dfn]{Corollary}

{\theoremstyle{definition}
\newtheorem{exa}[dfn]{Example}}

\usepackage[top=30truemm,bottom=30truemm,left=35truemm,right=35truemm]{geometry}
\usepackage{amsmath,amssymb,amscd,bm,graphicx,tikz-cd,upgreek,dsfont,amsfonts,tikz}
\usepackage[all]{xy}
\usepackage[colorlinks]{hyperref}


\tikzset{
        DB/.style={circle,draw=black,circle,fill=white,inner sep=0pt, minimum size=5pt},
        DW/.style={circle,draw=black,fill=black,inner sep=0pt, minimum size=5pt},
         gap/.style={inner sep=0.5pt,fill=white},
}
\tikzset{
W/.style={circle,draw=black,circle,fill=white,inner sep=0pt, minimum size=4pt},
B/.style={circle,draw=black!80!white,circle,fill=black!80!white,inner sep=0pt, outer sep=4pt, minimum size=3pt},
Bs/.style={circle,draw=black!80!white,circle,fill=black!80!white,inner sep=0pt, outer sep=2pt, minimum size=3pt},
BL/.style={circle,draw=blue!60!white,circle,fill=blue!60!white,inner sep=0pt, minimum size=4pt},
R/.style={circle,draw=red!60!white,circle,fill=red!60!white,inner sep=0pt, minimum size=4pt},  
G/.style={circle,draw=green!65!black,circle,fill=green!65!black,inner sep=0pt, minimum size=4pt},     
Rs/.style={circle,draw=red!60!white,circle,fill=red!60!white,inner sep=0pt, minimum size=2pt}, 
BLs/.style={circle,draw=blue!60!white,circle,fill=blue!60!white,inner sep=0pt, minimum size=2pt},
Gs/.style={circle,draw=green!65!black,circle,fill=green!65!black,inner sep=0pt, minimum size=2pt},  }

\usepackage{setspace}
\setlength{\marginparwidth}{1in}
\newcommand{\marginparstretch}{0.6}
\let\oldmarginpar\marginpar
\renewcommand\marginpar[1]{\-\oldmarginpar[\framebox{\setstretch{\marginparstretch}\begin{minipage}{\marginparwidth}{\raggedleft\tiny #1}\end{minipage}}]{\framebox{\setstretch{\marginparstretch}\begin{minipage}{\marginparwidth}{\raggedright\tiny #1}\end{minipage}}}}

\usepackage{enumitem}
\setlist[enumerate]{format=\normalfont}

\usepackage{array,multirow,booktabs,longtable}
\setlength{\heavyrulewidth}{1.2pt}
\setlength{\abovetopsep}{4pt}

\numberwithin{equation}{section}

\newcommand{\ii}{\kern 1pt {\rm i}\kern 1pt }

\newcommand{\Spec}{\operatorname{Spec}}

\newcommand{\Hom}{\operatorname{Hom}}
\newcommand{\Ext}{\operatorname{Ext}}

\newcommand{\RHom}{\operatorname{{\mathbf R}Hom}}
\newcommand{\End}{\operatorname{End}}
\newcommand{\uEnd}{\operatorname{\underline{End}}}

\newcommand{\Auteq}{\operatorname{Auteq}}

\newcommand{\res}{\operatorname{res}}

\newcommand{\coh}{\operatorname{coh}}

\renewcommand{\mod}{\operatorname{mod}}

\newcommand{\CM}{\operatorname{CM}}

\newcommand\Db{\mathop{\rm{D}^b}}
\newcommand\perf{\operatorname{per}}

\newcommand{\Stab}{\operatorname{Stab}}
\newcommand{\cStab}[1]{\mathrm{Stab}_{#1}^{\kern -0.5pt \circ}\kern -0.25pt}
\newcommand{\TitsR}{{\sf Tits}_{\kern 1pt \bR}}
\newcommand{\CTitsR}{{\sf CTits}_{\kern 1pt \bR}}
\newcommand{\CTitsC}{{\sf CTits}_{\kern 1pt \bC}}
\newcommand{\cAut}{\mathrm{Aut}^{\kern 0pt \circ}\kern -0.25pt}

\def\Br{\mathop{\sf PBr}\nolimits}

\newcommand{\cA}{\mathcal{A}}

\newcommand{\cJ}{\mathcal{J}}

\newcommand{\bC}{\mathbb{C}}

\newcommand{\bH}{\mathbb{H}}

\newcommand{\bR}{\mathbb{R}}

\newcommand{\bZ}{\mathbb{Z}}

\newcommand{\Mut}{\operatorname{Mut}}

\newcommand{\Flop}{\operatorname{\sf Flop}}

\newcommand{\scrA}{\EuScript{A}}

\newcommand{\scrC}{\EuScript{C}}

\newcommand{\scrE}{\EuScript{E}}
\newcommand{\scrF}{\EuScript{F}}

\newcommand{\scrH}{\EuScript{H}}

\newcommand{\scrO}{\EuScript{O}}
\newcommand{\scrP}{\EuScript{P}}
\newcommand{\scrQ}{\EuScript{Q}}
\newcommand{\scrS}{\EuScript{S}}
\newcommand{\scrT}{\EuScript{T}}
\newcommand{\scrU}{\EuScript{U}}
\newcommand{\scrV}{\EuScript{V}}
\newcommand{\scrX}{\EuScript{X}}

\newcommand{\dsG}{\mathds{G}}

\newcommand{\simto}{\xrightarrow{\sim}}

\newcommand{\Delt}{\Updelta_{\kern 0.05em 0}}
\newcommand{\WDelt}{W_{\kern -0.1em \Updelta}}
\newcommand{\WDeltaff}{W_{\kern -0.1em \Updelta_{\aff}}}
\newcommand{\Wkern}[1]{W_{\kern -0.1em #1}\kern 0.05em}
\newcommand{\wo}[1]{w_{\kern -0.075em #1}}
\newcommand{\wop}[1]{w^{\phantom J}_{\kern -0.1em #1}}
\newcommand{\GammaJ}{\Upgamma_{\kern -0.05em J}}
\newcommand{\GammaS}{\Upgamma_{\kern -0.05em \cJ}}

\newcommand{\xlup}[1]{{}^{#1}\kern -0.15em x}
\newcommand{\xlupmax}[1]{{}^{#1}\kern -0.25em x}
\newcommand{\iDelta}{\iota_{\kern -0.075em \Updelta}}
\newcommand{\Upphisub}[1]{\Upphi_{\kern -0.1em #1}}
\newcommand{\aff}{\operatorname{\mathsf{aff}}\nolimits}

\newcommand{\con}{\mathrm{con}}
\renewcommand{\AA}{\mathrm{A}}
\newcommand{\BB}{\mathrm{B}}

\newcommand{\CA}{\AA_{\con}}
\newcommand{\CB}{\BB_{\con}}

\def\Id{\mathop{\rm{Id}}\nolimits}

\begin{document}

\title{Stability conditions for contraction algebras}

\author{Jenny August}
\address{Jenny August, Department of Mathematics, Aarhus University, Ny Munkegade 118, 8000 Aarhus C, Denmark.}
\email{jennyaugust@math.au.dk}
\author{Michael Wemyss}
\address{Michael Wemyss, The Mathematics and Statistics Building, University of Glasgow, University Place, Glasgow, G12 8QQ, UK.}
\email{michael.wemyss@glasgow.ac.uk}

\begin{abstract} 

This paper gives a description of the full space of Bridgeland stability conditions on the bounded derived category of a contraction algebra associated to a $3$-fold flop. The main result is that the stability manifold is the universal cover of a naturally associated hyperplane arrangement, which is known to be simplicial, and in special cases is an ADE root system. There are four main corollaries: (1) a  short proof of faithfulness of pure braid group actions in both algebraic and geometric settings, the first that avoid normal forms, (2) a classification of tilting complexes in the derived category of a contraction algebra, (3) contractibility of the stability space $\cStab{}\scrC$ associated to the flop, and (4) a new proof of the $K(\uppi,1)$-theorem in various finite settings, which includes ADE braid groups.
\end{abstract}

\subjclass[2010]{Primary~16E35; Secondary~14J30, 20F36}
\thanks{J.A. was supported by the Carnegie Trust for the Universities of Scotland, and M.W. was supported by EPSRC grants~EP/R009325/1 and EP/R034826/1.}
\maketitle{}

\section{Introduction}

This paper provides a description of the entire stability manifold for a large class of finite dimensional algebras, namely contraction algebras associated to 3-fold flops, in terms of their $g$-vector fans. To establish this purely algebraic result, which takes place in the derived category of a finite dimensional algebra, requires both algebraic techniques through silting-discreteness, and also more remarkably geometric techniques through Fourier--Mukai transforms. The main corollaries are algebraic, topological and geometric.

\subsection{Change of categories} In birational geometry, $3$-fold flops play a vital role in the minimal model programme, providing a method to pass between the different minimal models of a given $3$-fold singularity. Contraction algebras \cite{DW1, DW3} are an uncountable class of symmetric, finite dimensional algebras introduced as a tool to study this geometry. To each 3-fold flopping contraction $f\colon X\to \Spec R$, where $X$ has at worst Gorenstein terminal singularities, there is an associated contraction algebra $\Lambda_\con$, which recovers all known numerical invariants of the flop \cite{DW1, HuaToda, HomMMP}.  A common theme, to which this paper contributes, is that results for contraction algebras imply results for $3$-fold flops.  


To explain this connection, note that associated to $f$ is the subcategory
\[
\scrC\colonequals \{ \scrF\in\Db(\coh X)\mid \mathbf{R
}f_*\scrF=0\}
\]
of $\Db(\coh X)$. This subcategory is small enough in the sense that it has desirable properties such as finite dimensional Hom-spaces, and the 3-Calabi--Yau property whenever $X$ is smooth, but it is large enough that many problems involving $X$, some of which are introduced in \S\ref{sec: intro cors} below, can be solved by replacing $\Db(\coh X)$ with $\scrC$.

The third, and easiest of all, category associated to this situation is the bounded derived category $\Db(\Lambda_\con)$.  This is very different from $\scrC$, but the transfer between them is provided by noncommutative deformation theory, which gives a universal sheaf $\scrE$ \cite{DW3} and a functor
\begin{equation}
-\otimes^{\bf L}_{\Lambda_\con}\!\scrE\colon \Db(\Lambda_\con)\to\Db(\coh X) \label{transfer}
\end{equation}
whose image lies in $\scrC$.  The main idea is that establishing results on $\Db(\Lambda_\con)$ is in fact easier, and this in turn establishes results on $\scrC$.  To do this, we use the functor \eqref{transfer} twice: first to bring Fourier--Mukai transforms into algebra to prove the main result on $\Db(\Lambda_\con)$, then second to transfer the consequences back to $\scrC$, and thus also $X$, in a series of corollaries.

\subsection{Main result}

The change in category from $\scrC$ to $\Db(\Lambda_\con)$ brings one major advantage, namely that $\Lambda_\con$ is silting-discrete \cite[4.12]{August1}. This fact has two happy consequences.  First, we are able to describe the \emph{full} space of stability conditions on $\Db(\Lambda_\con)$, not just a component. Second, it is known that all silting-discrete algebras have contractible stability manifolds \cite{PSZ}, and thus $\Stab\Db(\Lambda_\con)$ is contractible, before we even begin.

Alas, the change in category also brings one major disadvantage.  Whilst contractibility comes for free, moving to finite dimensional algebras means that we lose the technology of Fourier--Mukai transforms, and so controlling standard equivalences becomes significantly more difficult. 
The age-old algebraic problem of knowing whether a given autoequivalence which is the identity on simples is globally the identity functor rears its head.   Happily, in our setting we tame this problem by appealing to a commutative diagram that uses \eqref{transfer} to intertwine our algebraic equivalences with the geometric flop functors, where we can use a standard Fourier--Mukai argument (see Theorem \ref{functor is identity}).

The tilting theory of $\Lambda_{\con}$ is controlled by a real simplicial hyperplane arrangement $\scrH$ \cite{August2}, which can be described in many ways. On one hand, $\scrH$ arises as the $g$-vector fan of $\Lambda_{\con}$, and on the other it arises naturally as the ample cone of $f\colon X\to \Spec R$.  Our main result is the following.
\begin{thm}[\ref{contractible alg}]\label{stab main intro}
If $\Lambda_\con$ is the contraction algebra of  $f$, then the natural map
\[
\Stab{}\Db(\Lambda_\con)\to \scrX=\bC^{n}\backslash\scrH_{\bC}
\]
is a regular cover.  Furthermore, $\Stab{}\Db(\Lambda_\con)$ is contractible and so this cover is universal.
\end{thm}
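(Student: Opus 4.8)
The plan is to separate the two assertions, since only one of them carries content. Contractibility requires no new work: because $\Lambda_\con$ is silting-discrete \cite{August1} and every silting-discrete algebra has contractible stability manifold \cite{PSZ}, the space $\Stab\Db(\Lambda_\con)$ is contractible from the outset. Consequently, once I show that the map $\pi\colon\Stab\Db(\Lambda_\con)\to\scrX$ of Lemma~\ref{image of p} is a covering, regularity and universality both follow automatically, since a covering with simply connected (here contractible) total space is the universal cover and every universal cover is regular. Thus the whole task reduces to proving that $\pi$ is a covering map, and I would do this by exhibiting a group $\Br$ acting freely and properly discontinuously on $\Stab\Db(\Lambda_\con)$ whose orbit space is identified with $\scrX$ via $\pi$.

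First I would establish that $\pi$ is a local homeomorphism. This is the local half of Bridgeland's deformation theory: near any $\sigma$ the central charge determines the stability condition, so the complex numbers assigned to the simples of the heart furnish local charts. Combined with silting-discreteness, these charts organise $\Stab\Db(\Lambda_\con)$ into a locally finite tiling by cells, one for each algebraic heart, each heart obtained from the standard one by iterated simple tilting. The key structural input is that, because $\scrH$ is simplicial, the adjacency of the cells across walls — governed by these tilts — matches exactly the face and chamber combinatorics of $\scrH$, and each cell maps homeomorphically under $\pi$ onto an open region of $\scrX=\bC^{n}\backslash\scrH_{\bC}$. Silting-discreteness is precisely what guarantees this tiling is exhaustive, so that no stability conditions are unaccounted for, the regions cover $\scrX$, and $\pi$ has the expected local sheet structure.

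Next I would introduce the deck group. Crossing a wall of the tiling is realised by a mutation/tilt autoequivalence of $\Db(\Lambda_\con)$, and the subgroup $\Br$ generated by the \emph{pure} such autoequivalences — those acting trivially on the Grothendieck group and hence preserving central charges — acts on $\Stab\Db(\Lambda_\con)$ with $\pi\circ g=\pi$ for every $g\in\Br$. The local finiteness of the cell tiling makes this action properly discontinuous, and $\Br$ permutes the cells lying over each point of $\scrX$ transitively, so $\pi$ induces a continuous bijection $\Stab\Db(\Lambda_\con)/\Br\to\scrX$ which the local homeomorphism property upgrades to a homeomorphism. To conclude that $\pi$ is a regular covering with deck group $\Br$, it then remains only to show that $\Br$ acts \emph{freely}.

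Freeness is the main obstacle, and is exactly the point where the loss of Fourier--Mukai technology bites. A nontrivial $g\in\Br$ fixing a generic stability condition must preserve its heart and fix each simple, since the simples there have distinct central charges; that is, $g$ is an autoequivalence which is the identity on all simples. On a finite dimensional algebra there is no a priori reason such a $g$ is the identity functor, and establishing this is precisely Theorem~\ref{functor is identity}. I would prove it by transporting the question through the functor $-\otimes^{\mathbf L}_{\Lambda_\con}\scrE$, whose image is $\scrC\subset\Db(\coh X)$, using the commutative diagram that intertwines the algebraic equivalences with the geometric flop functors, where a standard Fourier--Mukai kernel argument forces the corresponding geometric functor, and hence $g$ itself, to be the identity. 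Granting this, $\Br$ acts freely, so $\pi$ is a regular covering with deck group $\Br$; since $\Stab\Db(\Lambda_\con)$ is contractible the covering is universal, which completes the proof and incidentally identifies $\Br$ with $\fundgp\scrX$.
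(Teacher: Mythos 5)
Your proposal is correct and follows essentially the same route as the paper: contractibility for free from silting-discreteness via \cite{PSZ}, classification of all hearts by silting-discreteness (Proposition~\ref{hearts known}), a deck group $\Br$ acting trivially on K-theory (Proposition~\ref{trivial K}), identification of the orbit space with $\bC^{n}\backslash\scrH_{\bC}$ via the chamber decomposition (Proposition~\ref{chamber decomposition}), and freeness reduced to the Fourier--Mukai rigidity statement of Theorem~\ref{functor is identity}. The only cosmetic difference is that where you invoke local finiteness of the cell tiling and adjacency matching the simplicial combinatorics of $\scrH$, the paper instead uses Bridgeland's metric and \cite[Lemma 6.4]{B07} to produce around each point a ball disjoint from all its nontrivial $\Br$-translates (Corollary~\ref{free and prop discont}), which establishes freeness and proper discontinuity simultaneously without any wall-by-wall analysis.
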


The chambers of $\scrH$ can be labelled by the contraction algebras of flopping contractions reached from $f$ by iterated flops, and each path $\upalpha$ in the skeleton graph of $\scrH$ can be assigned a standard derived equivalence $F_\upalpha$. With this notation, the Galois group for the cover in Theorem~\ref{stab main intro} is the image of a \emph{pure} braid group under the group homomorphism $\uppi_1(\scrX)\to\Auteq\Db(\Lambda_\con)$ sending $\upalpha \mapsto F_\upalpha$.


\subsection{Corollaries}\label{sec: intro cors}
The universality in Theorem~\ref{stab main intro} allows us to both extract new results, and to simplify others in the literature.  The first consequence is the following, which gives a short proof of \cite[1.4]{August2}.  


\begin{cor}[\ref{faith cont alg}]\label{faith cont alg intro}
The homomorphism $\uppi_1(\scrX)\to\Auteq\Db(\Lambda_\con)$ sending $\upalpha\mapsto F_\upalpha$ is injective.
\end{cor}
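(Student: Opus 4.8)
The plan is to deduce injectivity as a purely formal consequence of Theorem~\ref{stab main intro}, using nothing beyond standard covering space theory. The heavy lifting---that $p\colon\Stab\Db(\Lambda_\con)\to\scrX$ is a regular cover whose Galois group is the image of $\uppi_1(\scrX)$ under $\upalpha\mapsto F_\upalpha$, and that the total space is contractible---is already done. The only new content is the observation that a universal cover has trivial monodromy kernel, so all the work really lies upstream.

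First I would record that, since $\Stab\Db(\Lambda_\con)$ is contractible it is in particular simply connected, so $p$ is the \emph{universal} cover of $\scrX$. For any universal cover the natural map from $\uppi_1$ of the base to the group of deck transformations is an isomorphism; in our situation those deck transformations are realised by the autoequivalences $F_\upalpha$ acting on stability conditions, which is exactly the assertion that the Galois group is the image of $\uppi_1(\scrX)$ under $\upalpha\mapsto F_\upalpha$. Thus the composite
\[
\uppi_1(\scrX)\xrightarrow{\ \upalpha\mapsto F_\upalpha\ }\Auteq\Db(\Lambda_\con)\longrightarrow \Aut\big(\Stab\Db(\Lambda_\con)\big),
\]
where the second arrow sends an autoequivalence to its induced action on the stability manifold, is the canonical isomorphism onto the deck group, and in particular is injective.

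Injectivity of the first arrow is then immediate by a one-line diagram chase. If $\upalpha\in\uppi_1(\scrX)$ lies in the kernel of $\upalpha\mapsto F_\upalpha$, then $F_\upalpha\cong\Id$ in $\Auteq\Db(\Lambda_\con)$, so $F_\upalpha$ acts as the identity on $\Stab\Db(\Lambda_\con)$; but then its image under the composite above is the trivial deck transformation, and since that composite is injective we conclude $\upalpha=1$. Hence $\uppi_1(\scrX)\to\Auteq\Db(\Lambda_\con)$ is injective, which is precisely the claim.

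I do not expect a genuine obstacle here, since the essential input is already encoded in Theorem~\ref{stab main intro}. The single point requiring a line of justification is that the $F_\upalpha$-action on $\Stab\Db(\Lambda_\con)$ genuinely covers the identity on $\scrX$---that is, that each $F_\upalpha$ is a deck transformation and not merely an autoequivalence---but this compatibility is exactly how the covering map was set up in Lemma~\ref{image of p} and identified in the proof of the main theorem, so it may simply be cited. It is worth emphasising, as the surrounding text does, that this argument bypasses the $\Ext$-group computations and normal-form arguments of \cite{August2}: faithfulness is here read off directly from the contractibility of the stability manifold.
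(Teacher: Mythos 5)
Your proof is correct and is essentially the paper's argument: the paper phrases it via the short exact sequence $1\to\uppi_1(\Stab\Db(\Lambda_\con))\to\uppi_1(\scrX)\to\Br\to 1$ of \cite[1.40(c)]{Hat} attached to the regular cover, with contractibility of $\Stab\Db(\Lambda_\con)$ killing the first term, which is exactly your identification of $\uppi_1(\scrX)$ with the deck group of the universal cover. The one point you rightly flag---that the monodromy map $\uppi_1(\scrX)\to\Br$ really is $\upalpha\mapsto F_\upalpha$---is handled in the paper in the same way, by citing how the cover and the $\Br$-action were set up in Section~\ref{Section 4}.
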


Combining the topological Corollary~\ref{faith cont alg intro} with the algebraic-geometric Theorem~\ref{functor is identity}  allows us to side-step algebra automorphism issues, and to fully classify one-sided tilting complexes in $\Db(\Lambda_\con)$.  The following is a strengthening of \cite[7.2]{August2}.

\begin{cor}[\ref{cont 1-1}]
There is bijection between morphisms in the Deligne groupoid of $\scrH$ ending at the vertex associated to $\Lambda_{\con}$ and the set of isomorphism classes of basic one-sided tilting complexes of $\Lambda_{\con}$. 
\end{cor}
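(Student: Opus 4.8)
The plan is to realise both sides of the asserted bijection as the set of \emph{geometric chambers} of the universal cover $\Stab\Db(\Lambda_\con)$, and then to check that the two resulting labellings agree. Throughout, write $c_0$ for the chamber of $\scrH$ (equivalently, the vertex of the skeleton graph) associated to $\Lambda_\con$, and for an arbitrary chamber $C$ let $\Lambda_{\con}^{C}$ be its contraction algebra.

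\emph{The Deligne side.} By Theorem~\ref{stab main intro} the map $p\colon\Stab\Db(\Lambda_\con)\to\scrX$ is a regular cover with contractible total space, hence the universal cover, and so $\scrX$ is a $K(\uppi,1)$. This is exactly the statement that the canonical functor from the Deligne groupoid of the simplicial arrangement $\scrH$ to the fundamental groupoid of $\scrX$, with one basepoint chosen in each chamber, is an equivalence. Fixing a lift $\tilde{c}_0$ of the basepoint of $c_0$, unique path lifting matches, for each chamber $C$, the morphisms $C\to c_0$ of the Deligne groupoid with the lifts of $C$ to $\Stab\Db(\Lambda_\con)$; letting $C$ vary, the morphisms ending at $c_0$ are thereby in bijection with the geometric chambers of $\Stab\Db(\Lambda_\con)$.

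\emph{The tilting side.} The silting-discreteness of $\Lambda_\con$ endows $\Stab\Db(\Lambda_\con)$ with a chamber decomposition indexed by its silting objects, this being the structure underlying the contractibility theorem of \cite{PSZ}; each chamber is the locus of stability conditions with a fixed algebraic heart. As $\Lambda_\con$ is symmetric, silting objects are tilting complexes, so these hearts are precisely the module categories $\mod\End(T)$ for $T$ a basic one-sided tilting complex of $\Lambda_\con$, and $T$ is recovered from its heart as the image of the basic progenerator under the associated equivalence. Thus isomorphism classes of basic one-sided tilting complexes are in bijection with the geometric chambers of $\Stab\Db(\Lambda_\con)$.

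\emph{Compatibility, and the main obstacle.} Explicitly, the geometric chamber over $C$ carrying a morphism $\upalpha\colon C\to c_0$ is sent to $T_\upalpha\colonequals F_\upalpha(\Lambda_{\con}^{C})$, a basic tilting complex of $\Lambda_\con$ with $\End(T_\upalpha)\cong\Lambda_{\con}^{C}$; well-definedness on Deligne morphisms is built into the construction of $p$ through the relation $F_{\upalpha\upbeta}\cong F_\upalpha F_\upbeta$. The crux is injectivity, and this is where the finite-dimensional algebra genuinely resists. Suppose $T_\upalpha\cong T_\upbeta$. Since isomorphic tilting complexes determine the same heart, hence the same geometric chamber over a single chamber of $\scrH$, the morphisms $\upalpha,\upbeta$ share the source $C$, and $F_\upbeta^{-1}F_\upalpha$ is a self-equivalence of $\Db(\Lambda_{\con}^{C})$ fixing the progenerator, hence preserving the standard heart. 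As the corresponding loop $\upbeta^{-1}\upalpha\in\uppi_1(\scrX)$ is a pure braid, it induces no permutation of the simples, so $F_\upbeta^{-1}F_\upalpha$ is the identity on simples; Theorem~\ref{functor is identity} then promotes this to an isomorphism with the identity functor, and the injectivity of Corollary~\ref{faith cont alg intro} gives $\upbeta^{-1}\upalpha=\mathrm{id}$, that is $\upalpha=\upbeta$ (both results apply verbatim with $\Lambda_{\con}^{C}$ in place of $\Lambda_\con$, the arrangement $\scrX$ being common to all chambers). It is precisely this faithfulness, together with the rigidity supplied by Theorem~\ref{functor is identity}, that is unavailable to the Ext-theoretic approach of \cite[7.2]{August2}, and that upgrades its classification to an honest bijection for one-sided tilting complexes.
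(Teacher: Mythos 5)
Your proposal is correct and, once the ``geometric chambers'' scaffolding is unwound, it is essentially the paper's own proof: surjectivity rests on silting-discreteness and the Koenig--Yang correspondence (the content of Proposition~\ref{hearts known} together with silting $=$ tilting for the symmetric algebra $\Lambda_\con$), and your injectivity argument is the identical chain --- $T_\upalpha\cong T_\upbeta$ gives equal hearts, the disjoint decomposition of Proposition~\ref{chamber decomposition} forces $s(\upalpha)=s(\upbeta)$, Theorem~\ref{functor is identity} upgrades the fixing of the progenerator to $F_{\upbeta^{-1}\circ\upalpha}\cong\Id$, and Corollary~\ref{faith cont alg} then yields $\upalpha=\upbeta$. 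One inaccuracy worth noting, though it lies only in your non-load-bearing framing: the equivalence of the Deligne groupoid with the fundamental groupoid of $\scrX$ is \emph{not} ``exactly'' the $K(\uppi,1)$ property supplied by Theorem~\ref{stab main intro}, but rather the much weaker classical fact recorded in Subsection~\ref{groupoids subsection} (\cite{Deligne, Paris, Paris3, Salvetti}), valid for any arrangement, and it is this fact --- not contractibility --- that the paper, and ultimately your own appeal to Corollary~\ref{faith cont alg}, actually uses.
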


%
%
%

A more surprising consequence of the universality in Theorem~\ref{stab main intro} is the faithfulness of the geometric action $\uppi_1(\scrX)\to\Auteq\Db(\coh X)$ from \cite{DW3}. The logic is straightforward: given any element in the kernel of the geometric action, there is an element in the kernel of the algebraic action, and we can appeal directly to Corollary~\ref{faith cont alg intro}.   As such, the following recovers the main result of the paper \cite{HW}. 

\begin{cor}[\ref{faithful geo cor}]\label{faithful geo cor intro}
The homomorphism $\uppi_1(\scrX)\to\Auteq\Db(\coh X)$, which sends $\upalpha$ to the corresponding composition of Bridgeland-Chen flop functors, is injective.
\end{cor}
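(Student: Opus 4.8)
The plan is to deduce the injectivity of the geometric action from the injectivity of the algebraic action, already established in Corollary~\ref{faith cont alg intro}, by transporting kernels across the functor $\Psi\colonequals -\otimes^{\bf L}_{\Lambda_\con}\scrE\colon\Db(\Lambda_\con)\to\Db(\coh X)$ whose image is $\scrC$. Concretely, I would show that the kernel of the geometric homomorphism $\upalpha\mapsto\Flop_\upalpha$ is contained in the kernel of the algebraic homomorphism $\upalpha\mapsto F_\upalpha$; since the latter is trivial, so is the former, and hence the geometric homomorphism is injective.

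The first step is to install the intertwining of the two actions provided by Theorem~\ref{functor is identity}. On each generating path $\upalpha$ the functor $\Flop_\upalpha$ is a single Bridgeland--Chen flop functor and $F_\upalpha$ is the corresponding tilting equivalence, and the theorem furnishes a natural isomorphism
\[
\Psi\circ F_\upalpha\;\cong\;\Flop_\upalpha\circ\Psi.
\]
Since both $\upalpha\mapsto F_\upalpha$ and $\upalpha\mapsto\Flop_\upalpha$ are (anti)homomorphisms out of $\uppi_1(\scrX)$, pasting these squares along compositions propagates the isomorphism from the generators to an arbitrary class $\upgamma\in\uppi_1(\scrX)$, giving $\Psi\circ F_\upgamma\cong\Flop_\upgamma\circ\Psi$.

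Now take $\upgamma$ in the kernel of the geometric action, so $\Flop_\upgamma\cong\Id$ on $\Db(\coh X)$. Feeding this into the intertwining yields $\Psi\circ F_\upgamma\cong\Psi$, and the task is to promote this to $F_\upgamma\cong\Id$ on $\Db(\Lambda_\con)$. This is where the compatibility of $\Psi$ with the tilting t-structures at the simples is essential: it is the mechanism that lets the triviality of $\Flop_\upgamma$ be detected back on $\Db(\Lambda_\con)$, forcing $F_\upgamma$ to be isomorphic to the identity functor and not merely to act as the identity on the simples. Granting $F_\upgamma\cong\Id$, the class $\upgamma$ lies in the kernel of the algebraic action, which is trivial by Corollary~\ref{faith cont alg intro}; hence $\upgamma=1$.

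The main obstacle is precisely the promotion in the previous step. Because $\Psi$ is not an equivalence --- $\scrC$ carries a $3$-Calabi--Yau structure (when $X$ is smooth) absent from $\Db(\Lambda_\con)$ --- one cannot simply cancel $\Psi$ on the left of $\Psi\circ F_\upgamma\cong\Psi$. The genuine content therefore lives in Theorem~\ref{functor is identity}, where the Fourier--Mukai description of the flop functors, together with the good behaviour of $\Psi$ on t-structures, resolves the classical difficulty of certifying that an autoequivalence which is the identity on simples is globally the identity. With that theorem in hand, the present corollary becomes a short, purely formal consequence of Corollary~\ref{faith cont alg intro}.
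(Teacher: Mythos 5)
Your overall route is the paper's: transport an element of the geometric kernel back across the change-of-categories functor, conclude $F_\upgamma\cong\Id$ via Theorem~\ref{functor is identity}, and finish with Corollary~\ref{faith cont alg}. But there is a genuine gap at the one non-formal step, and as written your justification for it does not work. Theorem~\ref{functor is identity} does not take $\Psi\circ F_\upgamma\cong\Psi$ as input: its hypothesis is the isomorphism of \emph{objects} $F_\upgamma(\Lambda_\con)\cong\Lambda_\con$, and nothing in your sketch produces that isomorphism. Your appeal to ``compatibility of $\Psi$ with the tilting t-structures'' is a gesture rather than an argument, and the phrase ``granting $F_\upgamma\cong\Id$'' marks exactly the point where the actual work remains to be done. (A smaller slip: the intertwining $\Psi\circ F_\upalpha\cong\Flop_\upalpha\circ\Psi$ comes from Theorem~\ref{intertwinement} together with \eqref{commutes for alpha}, not from Theorem~\ref{functor is identity}; the latter only \emph{uses} that diagram.)

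The paper closes the gap by refusing to treat $\Psi$ as a single black box: it factors the change of categories as $\Db(\Lambda_\con)\xrightarrow{\res}\Db(\Lambda)\xrightarrow{-\otimes^{\bf L}_\Lambda\scrV}\Db(\coh X)$. Since $-\otimes^{\bf L}_\Lambda\scrV$ \emph{is} an equivalence, your ``cannot cancel $\Psi$'' obstruction dissolves for the right-hand half of the diagram: $\Flop_\upgamma\cong\Id$ immediately gives $\Upphi_\upgamma\cong\Id$ on $\Db(\Lambda)$. The left-hand square then gives that $F_\upgamma(\Lambda_\con)$ restricts, under $\res$, to $\Lambda_\con$, and the detection result \cite[6.6]{August2} --- restriction of scalars along $\Lambda\to\Lambda_\con$ reflects isomorphism classes in this setting --- upgrades this to $F_\upgamma(\Lambda_\con)\cong\Lambda_\con$ in $\Db(\Lambda_\con)$, which is precisely the hypothesis of Theorem~\ref{functor is identity}. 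With that bridging step inserted your argument closes exactly as the paper's does; the repair is small and uses only ingredients already in the paper, but without it the promotion from $\Psi\circ F_\upgamma\cong\Psi$ to $F_\upgamma\cong\Id$ is unproven.
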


We can further extend this to establish results on the stability manifold of the associated subcategory $\scrC \subset \Db(\coh X)$. It was recently established in \cite{HW2} that there is a component of the space of stability conditions, $\cStab{}\scrC$, and a regular covering map
\[
\cStab{}\scrC\to \bC^{n}\backslash\scrH_{\bC},
\]
with Galois group equal to the image of $\uppi_1(\scrX)\to\Auteq\Db(\coh X)$. Given the faithfulness of this action from Corollary~\ref{faithful geo cor intro}, which is implied from the silting-discrete contractibility theorem, we are also able to deduce that $\cStab{}\scrC$ is contractible.  The logic again is straightforward:  universal covers are unique.  The proof of the following corollary is the only part of the paper where we use any prior results about $\cStab{}\scrC$.  

\begin{cor}[\ref{StabC contractible}]\label{StabC contractible intro}
$\cStab{}\scrC$ is contractible.
\end{cor}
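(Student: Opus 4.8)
The plan is to exploit the uniqueness of universal covers, exactly as the excerpt's framing suggests. From the prior results, we have two covering maps whose targets agree: first, the map $\cStab{}\scrC \to \bC^n \backslash \scrH_{\bC}$ established in \cite{HW2} (a regular covering onto the complexified complement $\scrX$); and second, the universal cover $\Stab \Db(\Lambda_\con) \to \scrX$ from Theorem~\ref{stab main intro}, whose total space is contractible. Since $\scrX$ is a connected, locally nice space (it is the complement of a complexified hyperplane arrangement, hence an open connected complex manifold, so locally path-connected and semilocally simply connected), the theory of covering spaces applies and its universal cover is unique up to homeomorphism over $\scrX$.

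\smallskip

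\emph{First} I would identify the deck transformation groups of both covers. The Galois group of $\Stab \Db(\Lambda_\con) \to \scrX$ is, by the discussion following Theorem~\ref{stab main intro}, the image of the pure braid group $\uppi_1(\scrX)$ under $\upalpha \mapsto F_\upalpha$, which is \emph{all} of $\uppi_1(\scrX)$ precisely because that action is faithful (Corollary~\ref{faith cont alg intro}). Thus this cover is universal, consistent with contractibility of its total space. \emph{Second}, I would pin down the Galois group of $\cStab{}\scrC \to \scrX$: its deck group is the image of $\uppi_1(\scrX)$ under the geometric action $\upalpha \mapsto \Flop_\upalpha$, and Corollary~\ref{faithful geo cor intro} tells us this action is \emph{also} faithful, so this Galois group is again the full $\uppi_1(\scrX)$. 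A regular cover whose deck group equals the whole fundamental group of the (connected, sufficiently nice) base is itself the universal cover.

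\smallskip

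\emph{Therefore} both $\cStab{}\scrC$ and $\Stab \Db(\Lambda_\con)$ are universal covers of the same base $\scrX$, so by uniqueness of universal covers there is a homeomorphism $\cStab{}\scrC \simto \Stab \Db(\Lambda_\con)$ commuting with the projections to $\scrX$. Since $\Stab \Db(\Lambda_\con)$ is contractible (Theorem~\ref{stab main intro}, via silting-discreteness and \cite{PSZ}), contractibility transports across this homeomorphism, giving that $\cStab{}\scrC$ is contractible.

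\smallskip

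\emph{The main obstacle} I anticipate is not the covering-space formalism, which is routine, but verifying that the deck group of the geometric cover $\cStab{}\scrC \to \scrX$ is genuinely the \emph{full} fundamental group rather than a proper subgroup — equivalently, that the geometric action is not merely faithful but surjects onto $\uppi_1(\scrX)$ with the correct identification. This is where the faithfulness input of Corollary~\ref{faithful geo cor intro} must be combined with the covering structure of \cite{HW2}: one needs the monodromy of the \cite{HW2} cover to be realized exactly by the flop functors $\Flop_\upalpha$, so that faithfulness of $\upalpha \mapsto \Flop_\upalpha$ upgrades the regular cover to a universal one. Care is required to match the two presentations of $\uppi_1(\scrX)$ — the one acting by flop functors and the one giving the deck transformations of \cite{HW2} — and to confirm they coincide as subgroups of the automorphisms of the cover. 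This is the sole place, as the excerpt notes, where prior structural facts about $\cStab{}\scrC$ genuinely enter the argument.
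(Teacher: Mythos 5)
Your proposal is correct and follows essentially the same route as the paper: the paper likewise takes the regular cover $\cStab{}\scrC\to\scrX$ from \cite{HW2} with Galois group the image of $\upalpha\mapsto\Flop_\upalpha$, uses Corollary~\ref{faithful geo cor} to conclude that the quotient map $\uppi_1(\scrX)\to G$ is an isomorphism (hence, via the short exact sequence \eqref{Hat ses}, that $\cStab{}\scrC$ is simply connected and the cover universal), and then invokes uniqueness of universal covers together with Corollary~\ref{contractible alg}. Your concluding caution is also well placed, and is resolved exactly as you suggest: the identification of the deck group with the flop-functor action is part of the cited result of \cite{HW2}, the only point where prior knowledge of $\cStab{}\scrC$ enters.
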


The original proof of Corollary~\ref{StabC contractible intro} in \cite{HW2} relies on knowing the universal cover of $\bC^{n}\backslash\scrH_{\bC}$ is contractible, or in other words, knowing the $K(\uppi,1)$-conjecture holds for $\scrH$ (or more precisely its associated braid group). Since the arrangement $\scrH$ associated to $f$ can be obtained from the root system of an ADE Dynkin diagram by intersecting the reflecting hyperplanes with a certain subspace, it is always simplicial, and hence it is well-known the $K(\uppi,1)$-conjecture holds in this case \cite{Deligne}.  However, our approach via silting-discreteness allows us to bypass this result and actually reprove it without using \cite{B3,BT}, or normal forms. For example, if $\Lambda_{\con}$ is a contraction algebra with associated $\scrH$ being an ADE root system, then $\bC^{n}\backslash\scrH_{\bC}=\mathfrak{h}_{\mathrm{reg}}$ and it follows from Theorem~\ref{stab main intro} that the composition 
\[
\Stab\Db(\Lambda_\con)\to \mathfrak{h}_{\mathrm{reg}}\to \mathfrak{h}_{\mathrm{reg}}/W
\]
is also a covering map.

\begin{cor}[\ref{kpi1main}] \label{kpi1intro}
The $K(\uppi,1)$-conjecture holds for all ADE braid groups.
\end{cor}

We recall the $K(\uppi,1)$-conjecture in Section~\ref{Section 5} and remark that we actually reprove it for all intersection arrangements obtained from an ADE root system by intersecting the reflecting hyperplanes with a subspace generated by a choice of the coordinate vectors. This class of intersection arrangements is a bit eclectic: as a consequence, we prove $K(\uppi,1)$ for the Coxeter groups $I_n$ for \mbox{$n=3,4,5,6,8$}, but none of the other $n$.  The paper \cite{IW9} describes in more detail the types of arrangements that can arise; see Remark~\ref{intersection arrangement remark}.

\subsection*{Conventions}
Throughout we work over the field of complex numbers. Given a noetherian ring $A$, modules will be right modules unless specified, and $\mod A$ denotes the category of finitely generated $A$-modules.  We use the functional convention for composing arrows, so $f\circ g$ means $g$ then $f$.  With this convention, given a ring $R$, an $R$-module $M$ is a left $\End_R(M)$-module. Furthermore, $\Hom_R(M,N)$ is a right $\End_R(M)$-module and a left $\End_R(N)$-module, in fact a bimodule.


\section{Wall Crossing and Functorial Composition}

Throughout this paper $f\colon X\to \Spec R$ is a fixed 3-fold flopping contraction, where $X$ has at worst Gorenstein terminal singularities, and $R$ is complete local.  Necessarily $R$ is an isolated cDV singularity \cite{Pagoda}.  Associated to $f$ is a rigid Cohen--Macaulay (=CM) module $N$, an algebra $\Lambda\colonequals\End_R(N)$, and a contraction algebra $\Lambda_\con$.  We first briefly review these notions, mainly to set notation.

\subsection{Rigid Modules}\label{rigid prelim}
Since $R$ is Gorenstein, recall that 
\[
\CM R\colonequals\{ X\in\mod R\mid\Ext^i_R(X,R)=0\mbox{ for all }i>0\}.
\] 
For $X\in\CM R$, we say that $X$ is \emph{basic} if there are no repetitions in its Krull--Schmidt decomposition into indecomposables.  We call $X$ a \emph{generator} if one of these indecomposable summands is $R$, and we call $X$  \emph{rigid} if $\Ext^1_R(X,X)=0$.  We say $X\in\CM R$ is \emph{maximal rigid} if it is rigid, and furthermore it is maximal with respect to this property (see \cite[4.1]{IW4}).
 
 By the Auslander--McKay correspondence for cDV singularities, there is a one-to-one correspondence between flopping contractions $Y\to \Spec R$, up to $R$-isomorphism, and basic rigid generators in $\CM R$  \cite[4.13]{HomMMP}.   In particular, both sets are finite.
 For our fixed flopping contraction $f\colon X\to\Spec R$, the corresponding basic rigid CM generator across the bijection will throughout this paper be denoted $N$. The \emph{contraction algebra} of $f$ may then be defined as the stable endomorphism algebra $\Lambda_{\con} \colonequals \uEnd_R(N)$ \cite{DW1,DW3}.

The set of basic rigid CM generators carries an operation called \emph{mutation}.  Indeed, given such an $L$, with indecomposable summand $L_i\ncong R$, there is the so-called exchange sequence 
\begin{equation}
0\to L_i\to\bigoplus_{j\neq i}{L_j}^{\oplus b_{ij}} \to K_i\to 0\label{bij}
\end{equation}
and $\upnu_iL\colonequals \frac{L}{L_i}\oplus K_i$ \cite[(A.A)]{HomMMP}.  Given the summand $K_i$ of $\upnu_iL$ we can mutate again, and obtain $\upnu_i\upnu_iL$. It is a general fact for isolated cDV singularities that $\upnu_i\upnu_iL\cong L$ \cite[4.20(1)]{HomMMP}, and moreover in the second exchange sequence
\[
0\to K_i\to\bigoplus_{j\neq i}{L_j}^{\oplus c_{ij}} \to L_i\to 0
\]
we have $b_{ij}=c_{ij}$ for all $i,j$ \cite[5.22]{HomMMP}.  Furthermore, there are equivalences of derived categories
\begin{align*}
\RHom(\Hom_R(L,\upnu_iL),-)&\colon\Db(\mod\End_R(L))\xrightarrow{\sim}\Db(\mod\End_R(\upnu_iL))\\
\RHom(\Hom_R(\upnu_iL,L),-)&\colon\Db(\mod\End_R(\upnu_iL))\xrightarrow{\sim}\Db(\mod\End_R(L)).
\end{align*}
We will abuse notation and notate both as $\Upphi_i$, and refer to them as the \emph{mutation functors}.

Given our fixed basic rigid CM generator $N$ corresponding to $f$, write $\Mut_0(N)$ for the set of basic rigid CM generators that can be obtained from $N$ by iteratively mutating at indecomposable summands noting that, in this paper, we will never mutate at the summand $R$.  Geometrically, across the Auslander--McKay correspondence, this corresponds to all $R$-schemes that can be obtained from $X$ by iteratively flopping curves.

\subsection{Hyperplanes and Labels}\label{hyper subsection and order}
We fix a decomposition $N=R\oplus N_1\oplus \hdots\oplus N_n$, and will often implicitly declare $N_0=R$. For every $L\in\Mut_0(N)$, and each indecomposable summand $L_i$ of $L$, there is an exact sequence
\begin{equation}
0\to \bigoplus_{j=0}^n N_j^{\oplus a_{ij}}\to \bigoplus_{j=0}^n N_j^{\oplus b_{ij}}\to L_i\to 0 \label{two-term approximation}
\end{equation}
where the first and second terms do not share any indecomposable summands. In the setting where $N\in\CM R$ is maximal rigid, this fact is very-well known; in the setting here for rigid generators we rely instead on \cite[9.29, 5.6]{IW9}.  In any case, consider the cone
\[
C_L\colonequals \left\{ \sum_{i=1}^n \upvartheta_i \left( \sum_{j=1}^n (b_{ij}-a_{ij}) \,\mathbf{e}_i \right) \,\middle\vert\, \upvartheta_i>0 \mbox{ for all }i\right\}\subset\mathbb{R}^n.
\] 
It is clear that $C_N=\{ \sum_{i=1}^n \upvartheta_i \mathbf{e}_i\mid \upvartheta_i>0 \mbox{ for all }i\}$ which we denote throughout by $C_+$.  Furthermore, the chambers $C_L$, as $L$ varies over $\Mut_0(N)$, sweep out the chambers of a simplicial hyperplane arrangement $\scrH$ \cite{HW, IW9}.   


\begin{rem} \label{intersection arrangement remark}
The above arrangement has several equivalent descriptions; it can be obtained as the $g$-vector fan of $\Lambda_{\con}$ \cite[7.1]{August2} or as the moveable cone of the flopping contraction $f$ \cite[4.6(3)]{HW} (where, as described in the following remark, both of these results are extended to the rigid setting using \cite[9.29]{IW9}). Using this last description, HomMMP \cite[5.24, 5.25]{HomMMP} gives the description of $\scrH$ via intersecting a subspace of an ADE root system with the  reflecting hyperplanes. In other words, all the $\scrH$ above are intersection arrangements of ADE root systems, and as such, are both finite and simplicial.  
\end{rem}

\begin{rem}
The three key properties appearing here and later are the following.
\begin{enumerate}
    \item $\upnu_i\upnu_iL \cong L$ for any basic rigid CM generator.
    \item Moreover, $b_{ij}=c_{ij}$ in the corresponding exchange sequences.
    \item Every $L\in\Mut_0(N)$ has a two-term approximation by $N$, as in \eqref{two-term approximation}, ensuring that $\Hom_R(L,N)$ is a tilting bimodule for $\Lambda$.
\end{enumerate} 
The first two follow as a consequence of HomMMP \cite[4.20(1), 5.22]{HomMMP}, since flopping is an involution.  In the maximal rigid setting (3) is known, but to prove (3) in the general rigid setting requires a small part of \cite{IW9}.    Once (1)--(3) hold, \cite[4.9(2)]{August2} extends all the results of \cite{HW, August2} to the setting of rigid modules, and this is what we use below.  A reader wishing only to work in the maximal rigid setting, or in the smooth setting, may safely disregard references to \cite{IW9}.
\end{rem}

 As explained by the general Coxeter-style labelling of walls and chambers in \cite[\S9.7]{IW9}, the fixed decomposition $N=R\oplus N_1\oplus \hdots\oplus N_n$ induces an ordering on the summands of all other elements $L$ of $\Mut_0(N)$, such that each local wall crossing has a label $s_i$ and corresponds to mutation at the $i$th summand. In this way, there is a compatible labelling on the edges of the 1-skeleton of $\scrH$.  We illustrate this in one example.
\begin{exa}
There exists a two-curve flop over a $cD_4$ singularity with the simplicial hyperplane arrangement shown in Figure~\ref{figure1}.  Each local wall crossing is labelled by $s_i$ for $i=1,2$, and under crossing wall $s_i$ the $i$th summand is mutated.  For clarity, in each chamber we have not written the zeroth summand $R$.    
\begin{figure}[h]
\[
\begin{array}{ccc}
\begin{array}{c}
\begin{tikzpicture}[scale=1.4,bend angle=20]
\draw[densely dotted] (0,0) circle (1.5cm);
\coordinate (A1) at (135:2cm);
\coordinate (A2) at (-45:2cm);
\coordinate (B1) at (153.435:2cm);
\coordinate (B2) at (-26.565:2cm);
\coordinate (C1) at (161.565:2cm);
\coordinate (C2) at (-18.435:2cm);
\draw[red!60] (A1) -- (A2);
\draw[green!50!black!50] (B1) -- (B2);
\draw[black!60] (-2,0)--(2,0);
\draw[black!60] (0,-1.75)--(0,1.75);
\node (C+) at (45:1.5cm)[gap] {$\scriptstyle N_1\oplus N_2$};
\node (C1) at (112.5:1.5cm)[gap] {$\scriptstyle A_1\oplus N_2$};
\node (C2) at (145.5:1.7cm)[gap] {$\scriptstyle A_1\oplus B_2$};
\node (C3) at (167.5:1.5cm)[gap] {$\scriptstyle C_1\oplus B_2$};
\node (C-) at (225:1.5cm)[gap] {$\scriptstyle C_1\oplus D_2$};
\node (C4) at (-67.5:1.5cm)[gap] {$\scriptstyle B_1\oplus D_2$};
\node (C5) at (-34.5:1.7cm)[gap] {$\scriptstyle B_1\oplus A_2$};
\node (C6) at (-13:1.5cm)[gap] {$\scriptstyle N_1\oplus A_2$};
\node at (78.75:1.6cm) {$\scriptstyle \upnu_1$};
\node at (131:1.65cm) {$\scriptstyle \upnu_2$};
\node at (157:1.65cm) {$\scriptstyle \upnu_1$};
\node at (198:1.65cm) {$\scriptstyle \upnu_2$};
\node at (258.75:1.6cm) {$\scriptstyle \upnu_1$};
\node at (312.5:1.65cm) {$\scriptstyle \upnu_2$};
\node at (337:1.65cm) {$\scriptstyle \upnu_1$};
\node at (378:1.65cm) {$\scriptstyle \upnu_2$};
\end{tikzpicture}
\end{array}
&\quad&
\begin{array}{c}
\begin{tikzpicture}[scale=1.4,bend angle=20]
\draw[densely dotted] (0,0) circle (1.5cm);
\coordinate (A1) at (135:2cm);
\coordinate (A2) at (-45:2cm);
\coordinate (B1) at (153.435:2cm);
\coordinate (B2) at (-26.565:2cm);
\coordinate (C1) at (161.565:2cm);
\coordinate (C2) at (-18.435:2cm);
\draw[red!34] (A1) -- (A2);
\draw[green!50!black!40] (B1) -- (B2);
\draw[black!40] (-2,0)--(2,0);
\draw[black!40] (0,-1.75)--(0,1.75);
\node (C+) at (45:1.5cm)[B] {};
\node (C1) at (112.5:1.5cm)[B] {};
\node (C2) at (145.5:1.5cm)[B] {};
\node (C3) at (167.5:1.5cm)[B] {};
\node (C-) at (225:1.5cm)[B] {};
\node (C4) at (-67.5:1.5cm)[B] {};
\node (C5) at (-34.5:1.5cm)[B] {};
\node (C6) at (-13:1.5cm)[B] {};
\node at (78.75:1.6cm) {$\scriptstyle s_1$};
\node at (131:1.65cm) {$\scriptstyle s_2$};
\node at (157:1.65cm) {$\scriptstyle s_1$};
\node at (198:1.65cm) {$\scriptstyle s_2$};
\node at (258.75:1.6cm) {$\scriptstyle s_1$};
\node at (312.5:1.65cm) {$\scriptstyle s_2$};
\node at (337:1.65cm) {$\scriptstyle s_1$};
\node at (378:1.65cm) {$\scriptstyle s_2$};
\end{tikzpicture}
\end{array}\\
\mbox{Exchange graph}
&&
\mbox{1-skeleton}
\end{array}
\]
\caption{Exchange graph and $1$-skeleton for certain two-curve $cD_4$ singularity.}
\label{figure1}
\end{figure}
Note that $C_N=C_+$ is the top right chamber.
\end{exa}

Fixing an ordering of the summands of $N$ not only fixes an ordering of the projective modules $\scrP_i=\Hom_R(N,N_i)$ of $\Lambda=\End_R(N)$, and via the pairing between simples and projectives, an ordering of its simples $\scrS_0,\scrS_1,\hdots,\scrS_n$, but it also fixes an ordering on the projectives and simples of $\End_R(L)$ for all $L\in\Mut_0(N)$.

To fix notation, for $L\in\Mut_0(N)$ suppose that the induced ordering on the summands of $L$ is $L=L_0\oplus L_1\oplus\hdots\oplus L_n$, where $L_0= R$.  There is an induced ordering on the projectives $\scrQ_i\colonequals \Hom_R(L,L_i)$ of $\End_R(L)$, and again via the pairing between simples and projectives, an ordering of its simples $\scrS'_0,\scrS'_1,\hdots,\scrS'_n$.   The simples for the contraction algebra $\uEnd_R(L)$ are $\scrS'_1,\hdots,\scrS'_n$.

\subsection{Standard Equivalences for Simple Wall Crossings}
Let $L\in\Mut_0(N)$, with associated $\AA \colonequals \End_R(L)$ and contraction algebra  $\CA \colonequals \uEnd_R(L)$.  To ease notation, suppose that $\BB \colonequals \End_R(\upnu_iL)$, with contraction algebra $\CB$, and consider the good truncation 
\begin{align}
\scrT_i \colonequals \uptau_{\scriptscriptstyle{ \geq -1}} ( \CB \otimes^{\bf L}_\BB \Hom_R(L,\upnu_iL) \otimes^{\bf L}_{\AA} \CA), \label{bimodcomplex}
\end{align}
which is a complex of $\CB$-$\CA$ bimodules. In this basic rigid setting, it is already known that the complex $\scrT_i$ is a two-sided tilting complex \cite[3.3]{August2}.

\begin{thm}\label{intertwinement}
Suppose that $L,M \in \Mut_0(N)$, which correspond to the flopping contractions $X_L\to\Spec R$ and $X_M\to\Spec R$ say.  Then the following are equivalent.
\begin{enumerate}
\item $C_L$ and $C_M$ share a codimension one wall in $\scrH$.
\item $M\cong\upnu_iL$ for some $i\neq 0$.
\item $X_L$ and $X_M$ are related by a flop at a single irreducible curve. 
\end{enumerate}
In this case, set $\AA \colonequals \End_R(L)$, $\CA \colonequals \underline{\End}_R(L)$, and $\BB \colonequals \End_R(\upnu_iL)$, $\CB \colonequals \underline{\End}_R(\upnu_iL)$. Then the mutation operations intertwine via the following commutative diagram
\[
\begin{tikzpicture}
\node (Acon) at (0,0) {$\Db(\CA)$};
\node (A) at (4,0) {$\Db(\AA)$};
\node (X) at (8,0) {$\Db(\coh X_L)$};
\node (Bcon) at (0,-2) {$\Db(\CB)$};
\node (B) at (4,-2) {$\Db(\BB)$};
\node (Y) at (8,-2) {$\Db(\coh X_{\upnu_iL})$};
\draw[->] (Acon) --
node[above] { $\scriptstyle \res$} (A);
\draw[->] (A) --
node[above] { $\scriptstyle - \otimes^{\bf L}_{\AA}\scrV$} node [below] {$\scriptstyle\sim$} (X);
\draw[->] (Bcon) --
 node[above] { $\scriptstyle \res$} (B);
\draw[->] (B) --
 node[above] { $\scriptstyle - \otimes^{\bf L}_{\BB} \scrV^+$} node [below] {$\scriptstyle\sim$} (Y);
 
\draw[->] (Acon) --node [left] {$\scriptstyle F_{i} \colonequals \RHom_{\CA}(\scrT_i, -)$} node [right] {$\scriptstyle\sim$} (Bcon);
\draw[->] (A) --node [left] {$\scriptstyle \Upphi_{i}$} node [right] {$\scriptstyle\sim$} (B);
\draw[->] (X) -- node[left] {$ \scriptstyle  \Flop_i$} node [right] {$\scriptstyle\sim$} (Y);
\end{tikzpicture}
\]
where $\Upphi_{i}$ is the mutation functor, $\Flop_i$ is the inverse of the Bridgeland--Chen flop functor \cite{Bridgeland, Chen}, $\scrV$ and $\scrV^+$ are the standard projective generators of zero perverse sheaves \cite{VdB}, and $\res$ is restriction of scalars induced from the ring homomorphisms $\AA\to\CA$ and $\BB\to\CB$.
\end{thm}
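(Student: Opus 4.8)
The statement packages two rather different assertions: the equivalence of conditions (1)--(3), and the commutativity of the two squares. I would dispose of the equivalences first, since they are bookkeeping on top of the Auslander--McKay correspondence. The equivalence (2)$\Leftrightarrow$(3) is the geometric content of that correspondence as recorded in \cite[4.13]{HomMMP}: mutation $\upnu_i$ at a non-free summand $L_i$ is realised geometrically by the flop of a single irreducible curve, and conversely. For (1)$\Leftrightarrow$(2) I would appeal to the construction of $\scrH$ together with its Coxeter-style wall labelling from \cite[\S1, \S7]{IW9}: by construction the chamber $C_{\upnu_i L}$ is obtained from $C_L$ by crossing the wall locally labelled $s_i$, so that two chambers are codimension-one adjacent precisely when the associated rigid generators differ by a single mutation. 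Thus all three conditions coincide, and we may fix $i\neq 0$ for the rest of the argument.

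For the commutativity of the \emph{right-hand} square I would invoke the established identification of mutation functors with flop functors. Van den Bergh's tilting theory \cite{VdB} supplies the horizontal equivalences $-\otimes^{\bf L}_{\AA}\scrV$ and $-\otimes^{\bf L}_{\BB}\scrV^+$ through the projective generators of zero perverse sheaves, and the content of the homological MMP \cite{HomMMP} is precisely that conjugating the inverse Bridgeland--Chen flop $\Flop_i$ \cite{Bridgeland, Chen} by these equivalences returns the mutation functor $\Upphi_i$. As both $\Flop_i$ and $\Upphi_i$ are standard (Fourier--Mukai, resp.\ tilting) functors, this square commutes up to natural isomorphism; the only thing to check here is that the normalisations of $\scrV$, $\scrV^+$ and of $\Upphi_i$ are compatible.

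The \emph{real} work is the left-hand square, i.e.\ $\res \circ F_i \cong \Upphi_i \circ \res$, and here the definition \eqref{bimodcomplex} of $\scrT_i$ is engineered to deliver the result. Writing $P \colonequals \Hom_R(L,\upnu_i L)$ for the $\BB$-$\AA$-bimodule defining $\Upphi_i$, the complex $\scrT_i$ is by construction the truncation $\uptau_{\geq -1}$ of the double base change $\CB \otimes^{\bf L}_{\BB} P \otimes^{\bf L}_{\AA} \CA$. Ignoring the truncation for a moment and writing $U\colonequals\CB \otimes^{\bf L}_{\BB} P \otimes^{\bf L}_{\AA} \CA$, the extension--restriction adjunction along $\AA\to\CA$ gives, for $M\in\Db(\CA)$,
\[
\RHom_{\CA}\!\big( (\CB \otimes^{\bf L}_{\BB} P) \otimes^{\bf L}_{\AA} \CA,\, M \big) \;\cong\; \RHom_{\AA}\!\big( \CB \otimes^{\bf L}_{\BB} P,\, \res M \big),
\]
and functoriality of $\RHom_{\AA}(-,\res M)$ in the unit map $P \to \CB \otimes^{\bf L}_{\BB} P$ then produces a natural comparison morphism to $\RHom_{\AA}(P,\res M)=\Upphi_i(\res M)$, compatibly with the right $\CB$- (resp.\ $\BB$-) actions after restriction along $\BB\to\CB$. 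I would then verify that this comparison is an isomorphism, the key structural input being that mutation at $i\neq 0$ leaves the free summand $L_0\cong R$ untouched, so that the information separating $P$ from its contraction-algebra reduction lives only in the $\scrS'_0$-direction, which is invisible to objects of $\Db(\CA)$.

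The main obstacle, as I see it, is precisely this last isomorphism: controlling the effect of the truncation $\uptau_{\geq -1}$ and of the passage to contraction algebras on $\RHom$. Rather than forcing the comparison morphism to be an isomorphism at the level of bimodules, I expect the cleanest route is to note that $\res \circ F_i$ and $\Upphi_i \circ \res$ are both exact functors $\Db(\CA)\to\Db(\BB)$ which are t-exact for the relevant tilted module t-structures (this is the ``tracking at simples'' that makes $\scrT_i$ a two-sided tilting complex in \cite[4.3]{August2}), and that they agree on a compact generating set of $\Db(\CA)$, namely the projective $\CA$-modules $\scrQ_1,\dots,\scrQ_n$ or the simples $\scrS'_1,\dots,\scrS'_n$, together with the induced map on morphism spaces; here the triangle relating $\scrT_i$ to $U$ shows that $\uptau_{\leq -2}U$ contributes nothing to $\RHom_{\CA}(-,M)$ in the degrees that matter for modules. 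A standard generation argument then upgrades this pointwise agreement to a natural isomorphism of functors. Pasting the two commuting squares finally yields the asserted intertwinement of $F_i$ with $\Flop_i$ through the composite horizontal equivalences.
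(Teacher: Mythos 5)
Your treatment of the equivalences and of the right-hand square coincides with the paper's, which disposes of all of these by citation: (1)$\Leftrightarrow$(2) is declared clear (cf.\ \cite[3.6]{HW2}), (2)$\Leftrightarrow$(3) is \cite[4.13]{HomMMP}, and the right-hand square is precisely \cite[4.2]{HomMMP}. One omission worth recording: \cite[4.13]{HomMMP} carries a caveat in its latter part, and the paper explicitly invokes \cite[4.20(1)]{HomMMP} to override it, which is valid here because one mutates only at indecomposable non-free summands and $R$ is isolated cDV; since $L$ is merely rigid rather than maximal rigid, your appeal to the Auslander--McKay correspondence needs this supplement.

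The genuine gap is in the left-hand square, where the paper's entire proof is the citation \cite[1.1]{August2} and you instead attempt a direct argument. Your adjunction step $\RHom_{\CA}(U,M)\cong\RHom_{\AA}(\CB\otimes^{\bf L}_{\BB}P,\res M)$, with $P=\Hom_R(L,\upnu_iL)$, and the natural comparison morphism $\res F_i(M)\to\Upphi_i(\res M)$ assembled from the triangle $\uptau_{\leq-2}U\to U\to\scrT_i$ and the unit $P\to\CB\otimes^{\bf L}_{\BB}P$ are sound. But the decisive claim, that this comparison is an isomorphism, is exactly the theorem being cited and is not established by your sketch: neither $\AA$ nor $\CA$ need have finite global dimension, so $U$ has homology in many degrees $\leq -2$, and while a long-exact-sequence argument does confine the truncation's effect when $M$ is a module (both $\res F_i(M)$ and $\Upphi_i(\res M)$ live in cohomological degrees $0,1$, since $\scrT_i$ is a two-term perfect complex and $P$ has projective dimension one), the remaining point --- that the unit-induced map is an isomorphism in those degrees for every $M$ restricted from $\Db(\CA)$ --- is precisely the content of \cite[1.1]{August2}; your remark that the discrepancy ``lives only in the $\scrS'_0$-direction'' is a heuristic, not an argument. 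Your proposed repair by generation also contains an error: the projectives $\scrQ_1,\dots,\scrQ_n$ do \emph{not} generate $\Db(\CA)$ as a thick subcategory, because $\CA$ is symmetric and not semisimple, hence of infinite global dimension, so $\operatorname{thick}(\proj\CA)=\perf\CA\subsetneq\Db(\CA)$; you must check on the simples. That check is legitimate in principle (the locus where a natural transformation is invertible is thick), but note that the paper derives the behaviour of $F_i$ on simples (Lemma~\ref{k-correspond simples}) \emph{from} the left-hand square of Theorem~\ref{intertwinement}, so you cannot quote that behaviour here without circularity: you would have to compute $\RHom_{\CA}(\scrT_i,\scrS'_j)$ directly from the definition \eqref{bimodcomplex}, which is again the computation carried out in \cite{August2}. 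As written, your proposal either silently re-imports \cite[1.1]{August2} or leaves its crux unproved; the clean fix is to cite it, as the paper does.
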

\begin{proof}
The statement (1)$\Leftrightarrow$(2) is clear; see also \cite[3.6]{HW2}. The statement (2)$\Leftrightarrow$(3) is \cite[4.13]{HomMMP}, where since we are mutating only at indecomposable summands and $R$ is isolated, \cite[4.20(1)]{HomMMP} overrides the caveat in the latter part of \cite[4.13]{HomMMP}.    The left hand diagram commutes by \cite[1.1]{August2}, and the right hand diagram commutes by \cite[4.2]{HomMMP}.
\end{proof}

\subsection{K-theory}
Since $\Lambda_\con$ is a finite dimensional algebra, consider the Grothendieck group
\[
\mathsf{G}_0(\Lambda_\con)\colonequals K_0(\Db(\Lambda_\con)),
\]
which is well-known to be a free abelian group based by the ordered simples $[\scrS_1],\hdots,[\scrS_n]$.  The notation $\mathsf{G}_0$ is chosen since we are including in the Grothendieck group all modules, whereas $K_0$ often only deals with vector bundles, and thus projectives.

Continuing the notation from Theorem~\ref{intertwinement}, for any $L\in\Mut_0(N)$ consider its associated contraction algebra $\CA=\uEnd_R(L)$ and its mutations $\upnu_i\CA\colonequals\uEnd_R(\upnu_iL)$. Abusing notation slightly, consider the standard equivalences
\[
\begin{tikzpicture}
\node at (1,0) {$\Db(\CA)$};
\node at (4.15,0) {$\Db(\upnu_i\CA)$};
\draw[->] (2,0.1) -- node[above] {$\scriptstyle F_i$}(3,0.1); 
\draw[<-] (2,-0.1) -- node[below] {$\scriptstyle F_i$}(3,-0.1);
\end{tikzpicture}
\]
where the functor from right to left is induced by the mutation $\upnu_i L \to \upnu_i \upnu_i L \cong L$.
These, and their inverses, induce the following four isomorphisms on K-theory
\begin{equation}
\begin{array}{c}
\begin{tikzpicture}
\node at (1.1,0) {$\mathsf{G}_0(\CA)$};
\node at (4.1,0) {$\mathsf{G}_0(\upnu_i\CA)$};
\draw[->] (2,0.1) -- node[above] {$\scriptstyle \mathsf{F}_i$}(3,0.1); 
\draw[<-] (2,-0.1) -- node[below] {$\scriptstyle \mathsf{F}_i$}(3,-0.1);
\end{tikzpicture}
\end{array}
\qquad
\begin{array}{c}
\begin{tikzpicture}
\node at (1.1,0) {$\mathsf{G}_0(\CA)$};
\node at (4.1,0) {$\mathsf{G}_0(\upnu_i\CA).$};
\draw[<-] (2,0.1) -- node[above] {$\scriptstyle \mathsf{F}_i^{-1}$}(3,0.1); 
\draw[->] (2,-0.1) -- node[below] {$\scriptstyle \mathsf{F}_i^{-1}$}(3,-0.1);
\end{tikzpicture}
\end{array}
\label{four matrices}
\end{equation}

As in Subsection~\ref{hyper subsection and order}, write $\{\scrS'_1,\hdots,\scrS'_n\}$ for the ordered simples of $\CA$.  For lack of suitable alternatives, also write $\{\scrS'_1,\hdots,\scrS'_n\}$ for the correspondingly ordered simples in $\upnu_i\CA$. 

\begin{lem}\label{k-correspond simples}
With the notation in \eqref{bij}, $\mathsf{F}_i\colon\mathsf{G}_0(\CA)\to\mathsf{G}_0(\upnu_i\CA)$ sends
\begin{equation}
[\scrS'_t]\mapsto
\left\{
\begin{array}{cl}
-[\scrS'_i]& \mbox{if } t= i,\\
b_{it}[\scrS'_i]+ [\scrS'_t] & \mbox{if } t\neq i.
\end{array}
\right.
\label{k-correspond simp}
\end{equation}
\end{lem}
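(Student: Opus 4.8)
The plan is to recognise the K-theory map $\mathsf{F}_i$ as the shadow of a single \emph{simple tilt}, and then to read off formula \eqref{k-correspond simp} from the standard behaviour of simple-minded mutation on Grothendieck groups. Concretely, the ordered simples $(\scrS'_1,\dots,\scrS'_n)$ form a simple-minded collection in $\Db(\CA)$, and since $F_i$ is a standard equivalence realising the crossing of the single wall separating $C_L$ from $C_{\upnu_iL}$ (Theorem~\ref{intertwinement}), the t-structure tracking of \cite{PSZ, August2} identifies the image collection $F_i(\scrS'_1,\dots,\scrS'_n)$ with the $i$-th simple-minded mutation of the simples $(\scrS'_1,\dots,\scrS'_n)$ of $\upnu_i\CA$. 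Thus the whole computation reduces to the effect of one mutation at the index $i$, which I would treat case by case.

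For $t=i$ the mutation sends the $i$-th simple to its shift $\scrS'_i[\pm 1]$; whichever shift occurs, its class is $-[\scrS'_i]$, giving the first line of \eqref{k-correspond simp}. For $t\neq i$ the mutated object sits in a triangle whose outer terms are $\scrS'_t$ and a direct sum of copies of $\scrS'_i$ (with no shift surviving into the heart, since the tilt is at $\scrS'_i$ only), so that $\mathsf{F}_i[\scrS'_t]=[\scrS'_t]+m_{it}[\scrS'_i]$, where $m_{it}$ is the multiplicity governing the extension. This multiplicity is an $\Ext^1$-count in the Gabriel quiver of $\CA$, namely the number of arrows between the vertices $i$ and $t$.

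The main obstacle, and the only genuine content, is the identification $m_{it}=b_{it}$. Here I would invoke the exchange sequence \eqref{bij}: applying $\Hom_R(L,-)$ produces a projective presentation of $\Hom_R(L,K_i)$ over $\AA=\End_R(L)$ in which $\scrQ_t=\Hom_R(L,L_t)$ appears with multiplicity exactly $b_{it}$, so $b_{it}$ is precisely the relevant arrow/$\Ext$-count for the mutation. Transporting this across the commutative square of Theorem~\ref{intertwinement} relating $F_i$ to the mutation functor $\Upphi_i$ via $\res$ then matches $m_{it}$ with $b_{it}$. Two bookkeeping points must be checked carefully: that the sign in the $t\neq i$ line is $+$ (equivalently, that the relevant extensions of $\scrS'_t$ by $\scrS'_i$ lie in the heart rather than in a shift), and that $b_{it}$ rather than $c_{it}$ is the correct multiplicity. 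Both are handled by the symmetry $b_{ij}=c_{ij}$ recorded in Section~\ref{rigid prelim}, together with the fact that $\CA$ is a symmetric algebra, so that $\dim\Ext^1_\CA(\scrS'_i,\scrS'_t)=\dim\Ext^1_\CA(\scrS'_t,\scrS'_i)$ and the direction of the mutation is immaterial at the level of K-theory.
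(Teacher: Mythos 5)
Your overall strategy --- realising $\mathsf{F}_i$ as the K-theoretic shadow of a single simple tilt and reading the formula off from simple-minded mutation --- is plausible, and your $t=i$ case is fine, but the step you yourself flag as the main obstacle, namely $m_{it}=b_{it}$, is where the argument has a genuine gap. You justify it by saying that applying $\Hom_R(L,-)$ to \eqref{bij} gives a projective presentation of $\Hom_R(L,K_i)$ over $\AA$ in which $\scrQ_t$ occurs with multiplicity $b_{it}$, ``so $b_{it}$ is precisely the relevant arrow/$\Ext$-count''. This is a non sequitur: the multiplicity of $\scrQ_t$ in an $\AA$-projective resolution of $\Hom_R(L,K_i)$ is not, on its face, the number of arrows between $i$ and $t$ in the Gabriel quiver of $\CA$, nor $\dim\Ext^1_{\CA}(\scrS'_t,\scrS'_i)$. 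The number $b_{it}$ arises as a multiplicity in a minimal left $\add(L/L_i)$-approximation of $L_i$, and when there are loops at vertex $i$ (radical endomorphisms of $L_i$ not factoring through the other summands) --- a common feature in this setting --- such multiplicities need not be plain arrow counts, since the composition of a loop with an irreducible map does not factor through $\add(L/L_i)$. The symmetric-algebra duality $\dim\Ext^1_{\CA}(\scrS'_i,\scrS'_t)=\dim\Ext^1_{\CA}(\scrS'_t,\scrS'_i)$ you invoke resolves a direction ambiguity but cannot repair this, as both $\Ext^1$-dimensions could differ from $b_{it}$ simultaneously. So, as written, the bridge between the exchange-sequence numerics and the mutation multiplicities --- the one piece of real content --- is missing.

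The repair is to bypass the simple-minded mutation formalism and compute directly, which is what the paper's proof does. Apply $\Hom_{\AA}(-,\scrS'_t)$ to the exact sequence $0\to\scrQ_i\to\bigoplus_{j\neq i}\scrQ_j^{\oplus b_{ij}}\to\Hom_R(L,K_i)\to 0$: since $\RHom_{\AA}(\scrQ_j,\scrS'_t)$ is $\mathbb{C}$ for $j=t$ and zero otherwise, and $t\neq i$, one gets $\RHom_{\AA}(\Hom_R(L,K_i),\scrS'_t)=\mathbb{C}^{\oplus b_{it}}$ concentrated in degree zero, whence $\Upphi_i(\scrS'_t)=\RHom_{\AA}(\Hom_R(L,\upnu_iL),\scrS'_t)$ is a module filtered by $b_{it}$ copies of $\scrS'_i$ and one copy of $\scrS'_t$; the commutative square of Theorem~\ref{intertwinement}, together with the fact that restriction of scalars detects such identifications (see \cite[6.6]{August2}), transfers this to $F_i$, and $\Upphi_i(\scrS'_i)=\scrS'_i[-1]$ (\cite[4.15]{HomMMP}) gives the $t=i$ row the same way. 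In this computation $b_{it}$ enters as the dimension of a $\Hom$-space against the resolution, not as an arrow count, so the loop issue never arises; moreover, for the K-theory statement your worry about whether ``shifts survive into the heart'' is immaterial, since classes are Euler characteristics (though concentration in degree zero is in fact automatic from the resulting two-term complex). Your reduction to simple-minded mutation is thus an extra unverified layer whose single load-bearing input is exactly the computation it was meant to avoid.
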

\begin{proof}
The fact that $\Upphi_i(\scrS'_i)=\scrS'_i[-1]$ is \cite[4.15]{HomMMP}.  Theorem~\ref{intertwinement} then implies that $F_i(\scrS'_i)$ maps, under restriction of scalars, to $\scrS'_i[-1]$.  It follows that $F_i(\scrS'_i)\cong\scrS'_i[-1]$ in $\Db(\CA)$, see e.g.\ \cite[6.6]{August2}. This establishes the top row.  

For the second row, applying $\Hom_R(L,-)$ to \eqref{bij} and using the rigidity of $L$ gives an exact sequence
\[
0\to \scrQ_i
\to \bigoplus_{j\neq i} \scrQ_j^{\oplus b_{ij}}
\to \Hom_R(L,K_i)
\to 0.
\]
Applying $\Hom_{\AA}(-,\scrS'_t)$ to this, with $t\neq i$, yields
\[
\RHom_{\AA}(\Hom_R(L,K_i),\scrS'_t)=\mathbb{C}^{\oplus b_{it}}.
\] 
Further, it is clear that $\RHom_{\AA}(\scrQ_j,\scrS'_t) \cong \Hom_{\AA}(\scrQ_j,\scrS'_t)$ is zero if $j\neq t$, and equals $\mathbb{C}$ if $j=t$.  Combining, we see that $\Upphi_i(\scrS'_t)=\RHom_{\AA}(\Hom_R(L,\upnu_iL),\scrS'_t)$ is a module, filtered by $b_{it}$ copies of $\scrS'_i$, and one copy of $\scrS'_t$.  Again, the left hand side of the commutative diagram in Theorem~\ref{intertwinement} then shows that $F_i(\scrS'_t)$ must also be a module, filtered by $b_{it}$ copies of $\scrS'_i$, and one copy of $\scrS'_t$.  The second row follows.
\end{proof}

\begin{rem}\label{dual proj basis}
Applying the above to $L=N$, basing $\mathsf{G}_0(\Lambda_\con)$ by the ordered simples $[\scrS_1],\hdots,[\scrS_n]$, the above transformation \eqref{k-correspond simp} assembles into a $n\times n$ matrix, with coefficients in $\mathbb{Z}$, representing the map $\mathsf{F}_i\colon\mathbb{Z}^n\to\mathbb{Z}^n$.  As is standard in linear algebra, the dual map
\[
\mathsf{F}_i^*\colon \mathsf{G}_0(\upnu_i\Lambda_\con)^*\to\mathsf{G}_0(\Lambda_\con)^*,
\]
where $\mathsf{G}_0(\Lambda_\con)^*$ has dual basis $\mathbf{e}_1,\hdots,\mathbf{e}_n$ say, is given by the transpose matrix.  But the transpose is precisely the transformation
\[
\mathbf{e}_t\mapsto
\left\{
\begin{array}{cl}
\mathbf{e}_t& \mbox{if } t\neq i,\\
-\mathbf{e}_i+\sum_{j\neq i}b_{ij}\mathbf{e}_j & \mbox{if } t=i.
\end{array}
\right.
\]
which is precisely the transformation $\upvarphi_i$ seen in moduli tracking \cite[\S5]{HomMMP}, or in K-theory of projectives in \cite[3.2]{HW2}.  In particular, it will be convenient to think of the dual basis $\mathbf{e}_1,\hdots,\mathbf{e}_n$ as being the basis $[\scrP_1],\hdots,[\scrP_n]$ of $\mathsf{K}_0(\perf\Lambda)/[\scrP_0]$, where $\perf\Lambda$ is the full subcategory of $\Db(\Lambda)$ consisting of perfect complexes. Then the transformation $\mathsf{F}_i^*$ can be identified with these transformations elsewhere in the literature, and we can then use those results freely.  Note that the hyperplane arrangement $\scrH$ from Subsection~\ref{hyper subsection and order} is defined in terms of $\mathbf{e}_i$, and so naturally lives in $\mathsf{G}_0(\Lambda_\con)^*$.
\end{rem}

By the above remark, the proof of the following does  follow as the dual of \cite[3.2]{HW2}.  It is however instructive to give a direct proof.

\begin{lem}\label{invol}
All four isomorphisms in \eqref{four matrices} are given by the same matrix, namely the one from \eqref{k-correspond simp}, and this matrix squares to the identity.  
\end{lem}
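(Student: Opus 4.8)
The plan is to pin down the single matrix $M$ coming from \eqref{k-correspond simp}, verify by hand that $M^2=\Id$, and then observe that all four arrows of \eqref{four matrices} are forced to equal $M$. The involution property does most of the work, since it reduces the problem to identifying just two of the four maps, the other two being their inverses.

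First I would read off $M$ directly from \eqref{k-correspond simp} in the ordered basis $[\scrS'_1],\hdots,[\scrS'_n]$, and verify the involution property by a one-line check. Applying the transformation twice to $[\scrS'_t]$ with $t\neq i$ gives
\[
b_{it}\bigl(-[\scrS'_i]\bigr)+\bigl(b_{it}[\scrS'_i]+[\scrS'_t]\bigr)=[\scrS'_t],
\]
while $[\scrS'_i]\mapsto -[\scrS'_i]\mapsto[\scrS'_i]$. Hence $M^2=\Id$, so in particular $M^{-1}=M$.

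Next I would identify the two left-hand arrows of \eqref{four matrices}. The top arrow $\mathsf{F}_i\colon\mathsf{G}_0(\CA)\to\mathsf{G}_0(\upnu_i\CA)$ equals $M$ by Lemma~\ref{k-correspond simples}. For the bottom arrow $\mathsf{F}_i\colon\mathsf{G}_0(\upnu_i\CA)\to\mathsf{G}_0(\CA)$, which by construction is the mutation functor associated to $\upnu_iL\to\upnu_i\upnu_iL\cong L$, I would simply re-run Lemma~\ref{k-correspond simples} with $\upnu_iL$ in the role of $L$. This yields a matrix of the same shape, now governed by the coefficients $c_{it}$ of the second exchange sequence. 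The decisive input is the symmetry $b_{ij}=c_{ij}$ recorded in Subsection~\ref{rigid prelim}, which is exactly what upgrades ``same shape'' to ``literally $M$''; one must also confirm that the convention fixing the ordering of the simples of $\upnu_i\CA$ is compatible with re-mutation, so that the two bases genuinely match. Finally, the two right-hand arrows are the inverses of the two left-hand arrows; since each left arrow is represented by $M$ and $M^{-1}=M$, both $\mathsf{F}_i^{-1}$ arrows are represented by $M$ as well, which gives the claim.

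The main obstacle is this middle step: showing that the bottom-left $\mathsf{F}_i$ is the very same matrix $M$, and not merely one of the same form. This is precisely where the equality $b_{ij}=c_{ij}$ of the double exchange sequence is indispensable, together with the compatibility of the simple-orderings under re-mutation. Everything else is a formal consequence of $M^2=\Id$.
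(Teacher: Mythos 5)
Your proposal is correct and takes essentially the same approach as the paper: both identify the two left-hand arrows by applying Lemma~\ref{k-correspond simples} to $L$ and to $\upnu_iL$, invoke the equality $b_{ij}=c_{ij}$ from Subsection~\ref{rigid prelim} to see the matrices literally coincide, verify the square is the identity by the same one-line computation, and deduce $\mathsf{F}_i^{-1}=\mathsf{F}_i$ formally. The only differences are cosmetic (you check $M^2=\Id$ first, the paper matches the two $\mathsf{F}_i$ first and then composes), and your explicit caveat about the simple-orderings being compatible with re-mutation is exactly what the labelling convention of Subsection~\ref{hyper subsection and order} guarantees.
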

\begin{proof}
By \eqref{k-correspond simp}, the matrices are controlled by the numbers $b_{ij}$ appearing in the relevant exchange sequences.  Say the top $\mathsf{F}_i$ is controlled by numbers $b_{ij}$, and the bottom $\mathsf{F}_i$ is controlled by numbers $c_{ij}$.  That the two matrices labelled $\mathsf{F}_i$ are the same is simply the statement that $b_{ij}=c_{ij}$, which has already been explained in Subsection~\ref{rigid prelim}.   Given this fact that $b_{ij}=c_{ij}$, we see that $\mathsf{F}_i\mathsf{F}_i={\rm Id}$ by simply observing 
\[
[\scrS'_t ] \ \xmapsto{\mathsf{F}_i} \
\left\{
\begin{array}{cl}
-[\scrS'_i]& \mbox{if } t= i,\\
b_{it}[\scrS'_i]+ [\scrS'_t] & \mbox{if } t\neq i.
\end{array}
\right. \
\xmapsto{\mathsf{F}_i} \
\left\{
\begin{array}{cl}
-(-[\scrS'_i])& \mbox{if } t= i\\
-b_{it}[\scrS'_i]+ (b_{it}[\scrS'_i] + [\scrS'_t]) & \mbox{if } t\neq i,
\end{array}
\right.
\]
which is clearly the identity.  Applying $\mathsf{F}_i^{-1}$ to each side of the equation $\mathsf{F}_i\mathsf{F}_i={\rm Id}$ gives $\mathsf{F}_i^{-1}=\mathsf{F}_i$, and all statements follow.
\end{proof}

\subsection{Groupoids}\label{groupoids subsection}
As in Subsection~\ref{hyper subsection and order}, associated to every contraction algebra is a hyperplane arrangement $\scrH$.  As is standard, there is an associated graph $\Gamma_{\scrH}$ defined as follows.

\begin{dfn} 
The vertices of $\Gamma_{\scrH}$ are the chambers, i.e.\ the connected components, of $\bR^n\backslash\scrH$. There is a unique arrow $a \colon  v_1\to v_2$ from chamber $v_1$ to chamber $v_2$ if the chambers are adjacent, otherwise there is no arrow.
\end{dfn}
By definition, if there is an arrow $a \colon  v_1\to v_2$, then there is a unique arrow $b\colon  v_2\to v_1$ with the opposite direction of $a$.  For an arrow $a\colon  v_1\to v_2$, set $s(a)\colonequals  v_1$ and $t(a)\colonequals  v_2$.  
  
A \emph{positive path of length~$n$} in $\Gamma_{\scrH}$ is  a formal symbol
\[
p=a_n\circ \hdots\circ a_2\circ a_1,
\]
 whenever there exists a sequence of vertices $v_0,\hdots,v_n$ of $\Gamma_{\scrH}$ and arrows $a_i\colon v_{i-1}\to v_i$ in $\Gamma_{\scrH}$. Set $s(p)\colonequals  v_0$, $t(p)\colonequals  v_n$, $\ell(p)\colonequals  n$, and write $p\colon s(p)\to t(p)$.  If 
$q=b_m\circ\hdots\circ b_2 \circ b_1$ is another positive path with $t(p)=s(q)$, we consider the formal symbol
\[
q\circ p\colonequals  b_m\circ\hdots\circ b_2 \circ b_1
\circ
a_n\circ \hdots\circ a_2\circ a_1,
\]
and call it the {\it composition} of $p$ and $q$.

\begin{dfn}
A positive path $\upalpha$ is called \emph{minimal} if there is no shorter positive path in $\Gamma_{\scrH}$, with the same start and end points as $\upalpha$.
\end{dfn}
Following \cite[p7]{Delucchi}, let $\sim$ denote the smallest equivalence relation, compatible with morphism composition, that identifies all morphisms that arise as positive minimal paths with same source and target. Then consider the free category $\mathrm{Free}(\Gamma_\scrH)$ on the graph $\Gamma_\scrH$, whose morphisms correspond to directed paths, and its quotient category 
\[
\dsG^{+} \colonequals \mathrm{Free}(\Gamma_\scrH)/\sim, 
\]
called the category of positive paths.

\begin{dfn} 
The \emph{Deligne groupoid}  $\dsG$ is the groupoid defined as the groupoid completion of  $\dsG^{+}$, that is, a formal inverse is added for every morphism in $\dsG^{+}$.
\end{dfn} 

It is a very well known fact \cite{Deligne, Paris, Paris3, Salvetti} (see also \cite[2.1]{Paris2}) that for any vertex $v\in \dsG$ there is an isomorphism $\End_\dsG(v)\cong\uppi_1(\scrX)$ where $\scrX=\mathbb{C}^n \setminus \scrH_\mathbb{C}$. This fact is general, and is significantly weaker and easier to establish than statements involving $K(\uppi,1)$.  We will use this fact implicitly throughout. 

\subsection{Composition and K-theory}\label{def of Falpha}
For any $\upalpha\in\mathrm{Free}(\Gamma_\scrH)$, say $\upalpha={s_{i_t}}\circ\hdots\circ {s_{i_1}}$, consider
\begin{align*}
F_\upalpha&\colonequals F_{i_t}\circ\hdots\circ F_{i_1}\\
\Upphi_\upalpha&\colonequals\Upphi_{i_t}\circ\hdots\circ \Upphi_{i_1}.
\end{align*}
The following is known, and is easy to establish just using the tilting order, in the case when the modules are maximal rigid (e.g.\ if $X$ is smooth).  In our more general situation of rigid generators, the same proof works, but it relies on some recent advances in \cite{IW9}.    
\begin{prop}\label{terminal ok}
Let $\upalpha\colon C_L\to C_M$ be a positive minimal path.  Set $\AA \colonequals \End_R(L)$, $\BB \colonequals \End_R(M)$, $\CA \colonequals \underline{\End}_R(L)$ and $\CB\colonequals \underline{\End}_R(M)$. Then the following hold.
\begin{enumerate}
\item $\Upphi_\upalpha$ is functorially isomorphic to  $\RHom_{\AA}(\Hom_R(L,M),-)$.
\item $F_\upalpha$ is functorially isomorphic to $\RHom_{\CA}(\scrT_{LM},-)$ where $
\scrT_{LM}$ is the two-sided tilting complex $ \uptau_{\scriptscriptstyle{ \geq -1}} ( \CB \otimes^{\bf L}_\BB \Hom_R(L,M) \otimes^{\bf L}_{\AA} \CA)$.
\end{enumerate}
In particular, all positive minimal paths with the same start and end points are functorially isomorphic.
\end{prop}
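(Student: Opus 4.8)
The plan is to prove (1) and (2) together by induction on the length $t=\ell(\upalpha)$, reducing everything to a single telescoping identity for the connecting bimodules that the tilting order will supply. Write $\upalpha=s_{i_t}\circ\cdots\circ s_{i_1}$ and let $L=L^{(0)},L^{(1)},\ldots,L^{(t)}=M$ be the induced sequence in $\Mut_0(N)$, so that $C_{L^{(0)}},\ldots,C_{L^{(t)}}$ are the successive chambers of $\scrH$ traversed by $\upalpha$ and each step is a single mutation $L^{(k)}=\upnu_{i_k}L^{(k-1)}$, via the equivalence (1)$\Leftrightarrow$(2) of Theorem~\ref{intertwinement}. The base case $t=1$ is the definition of $\Upphi_{i_1}$ for (1), and for (2) it is the definition of $\scrT_{i_1}=\scrT_{LM}$ together with the fact \cite[4.3]{August2} that it is a two-sided tilting complex.

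The inductive step rests on the standard fact that standard equivalences are closed under composition, with the connecting complex of a composite being the derived tensor product of the connecting complexes (in the order dictated by the functional composition convention). Applying this to the two mutation functors, statement (1) reduces to the claim that for each intermediate $P=L^{(k)}$ lying on the minimal path,
\[
\Hom_R(P,M)\otimes^{\bf L}_{\End_R(P)}\Hom_R(L,P)\cong\Hom_R(L,M)
\]
as $\End_R(M)$-$\End_R(L)$-bimodules, with the derived tensor product concentrated in degree zero. Statement (2) reduces to the analogous identity $\scrT_{PM}\otimes^{\bf L}\scrT_{LP}\cong\scrT_{LM}$ for the truncated bimodule complexes of \eqref{bimodcomplex}.

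The heart of the argument, and the one real obstacle, is this telescoping identity, and it is here that the tilting order enters. Minimality of $\upalpha$ means that no wall of $\scrH$ is crossed twice, so $C_{L^{(0)}},\ldots,C_{L^{(t)}}$ form a monotone (geodesic) gallery; concretely, $P$ lies in the interval between $L$ and $M$ in the tilting order on $\Mut_0(N)$, and in particular each $\Hom_R(L^{(k-1)},L^{(k)})$ and the total $\Hom_R(L,M)$ are genuine tilting modules in degree zero. For modules in such an interval the relevant $\Tor^{\End_R(P)}_{>0}$ vanish, so the derived tensor product degenerates to the ordinary one and the multiplicativity $\Hom_R(P,M)\otimes_{\End_R(P)}\Hom_R(L,P)\cong\Hom_R(L,M)$ holds on the nose. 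When $N$ is maximal rigid this is classical mutation theory; for rigid generators I would invoke the exchange-sequence technology and tilting-order results of \cite{IW9}, which is exactly the input the statement flags as needed beyond the maximal rigid case. The corresponding identity for the complexes $\scrT_i$ then follows by transporting this degree-zero telescoping across the commutative diagram of Theorem~\ref{intertwinement}, whose left-hand square intertwines each $F_{i_k}$ with $\Upphi_{i_k}$ via restriction of scalars; since the $\scrT_i$ are obtained from the $\Hom_R(L^{(k-1)},L^{(k)})$ by the reduction $\CB\otimes^{\bf L}_\BB-\otimes^{\bf L}_\AA\CA$ followed by $\uptau_{\scriptscriptstyle\geq-1}$, the Tor-vanishing upstairs forces the truncation to be compatible with composition, yielding $\scrT_{LM}$.

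Finally, the ``in particular'' clause is immediate once (1) and (2) are established: both $\Upphi_\upalpha$ and $F_\upalpha$ are then expressed solely through the endpoints $L$ and $M$, with no reference to the chosen path, so any two minimal positive paths sharing a source and target give functorially isomorphic functors. I expect the monotonicity and $\Tor$-vanishing step to be the crux: verifying that minimality of the combinatorial path translates into the homological interval condition, and that this condition persists in the rigid-generator generality, is where all the real content (and the dependence on \cite{IW9}) sits.
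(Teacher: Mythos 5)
Your proposal is correct in outline, but it takes a different route from the paper: where you reconstruct the induction from scratch, the paper's actual proof is a two-step reduction to the literature. For (1), the maximal rigid case is quoted verbatim from \cite[4.9(1)]{August2} and \cite[4.6]{HW}, and the only new content for rigid generators is the observation that \cite[\S7]{IW9} supplies the single key technical condition of \cite[4.9(2)]{August2}: $\Hom_R(L,M)$ is a tilting $\BB$-$\AA$-bimodule of projective dimension one as a right $\AA$-module, after which \cite[4.6]{HW} applies unchanged; (2) is then quoted from \cite[4.15]{August2} under the same condition. Your telescoping identity $\Hom_R(P,M)\otimes^{\bf L}_{\End_R(P)}\Hom_R(L,P)\cong\Hom_R(L,M)$ with minimality forcing degree-zero concentration is indeed the mechanism \emph{inside} the cited proofs, and you have correctly located both the crux and the precise role of \cite{IW9}. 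What the paper's version buys is brevity and a clean isolation of what is genuinely new (the bimodule condition in the rigid-generator setting); what yours buys is self-containedness, at the cost of re-proving \cite[4.6]{HW} and \cite[4.15]{August2}.

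One step in your sketch is a genuine hand-wave, and it is the hardest point of (2): the assertion that ``the Tor-vanishing upstairs forces the truncation to be compatible with composition.'' This is not a formal consequence. Expanding $\scrT_{PM}\otimes^{\bf L}_{\uEnd_R(P)}\scrT_{LP}$ (even before worrying about $\uptau_{\geq -1}$) leaves a copy of $\uEnd_R(P)$ wedged between $\Hom_R(P,M)$ and $\Hom_R(L,P)$, tensored over $\End_R(P)$; since the surjection $\End_R(P)\to\uEnd_R(P)$ is far from flat, this does not telescope to $\CB\otimes^{\bf L}_\BB\Hom_R(L,M)\otimes^{\bf L}_\AA\CA$ by any Tor-vanishing argument over $\End_R(P)$. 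In \cite[4.15]{August2} the identification $F_\upalpha\cong\RHom_{\CA}(\scrT_{LM},-)$ is instead obtained by using the restriction-of-scalars intertwinement \eqref{commutes for alpha} to pin down images in $\Db(\CA)$ via a recognition principle (cf.\ \cite[6.6]{August2}) and then lifting the resulting one-sided statement to the two-sided complex. Your instinct to route the argument through the commutative diagram of Theorem~\ref{intertwinement} is the right one, but it must replace, not merely decorate, the tensor-telescoping claim for the truncated complexes.
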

\begin{proof}
(1) When $N$ is maximal rigid, this is precisely \cite[4.9(1)]{August2}, \cite[4.6]{HW}.  In the more general setting here with $L,M\in\Mut_0(N)$, then certainly $L\in\Mut_0(M)$. Since $R$ is isolated cDV, it follows from the combinatorial and geometric description of mutation of rigid modules in \cite[9.25, 9.29]{IW9} that $\Hom_R(L,M)$ is a tilting $\End_R(M)$-$\End_R(L)$-bimodule, of projective dimension one when viewed as a right $\End_R(L)$-module.  This is the key technical condition, explained in \cite[4.9(2)]{August2}, and not available when \cite{HW} was written, that now allows us to use the main result \cite[4.6]{HW} freely in the more general setting here. \\
(2) When $N$ is maximal rigid, this is \cite[4.12]{August2}. The more general statement required here  follows by (1), and again the Remark \cite[4.9(2)]{August2} which asserts, given (1), we are able to apply the main result \cite[4.12]{August2} to the setting of rigid objects.
\end{proof}

For any two positive minimal paths $\upalpha$ and $\upbeta$ with the same start and end points, the above proposition shows that there is a functorial isomorphism  $F_\upalpha\cong F_\upbeta$.  Hence the association $\upalpha\mapsto F_\upalpha$ descends to a functor from $\dsG^+$.  Since $F_\upalpha$ is already an equivalence, this in turn formally descends to a functor from $\dsG$.    Using the same logic,  the assignment $\upalpha\mapsto\Upphi_\upalpha$ also descends to a functor from $\dsG^+$, and then a functor from $\dsG$.

Furthermore, for every $\upalpha\in\Hom_{\dsG}(C_L,C_M)$, using the notation from the second sentence in Proposition~\ref{terminal ok}, the following diagram commutes
\begin{equation}
\begin{array}{c}
\begin{tikzpicture}
\node (Acon) at (0,0) {$\Db(\CA)$};
\node (A) at (4,0) {$\Db(\AA)$};
\node (Bcon) at (0,-2) {$\Db(\CB)$};
\node (B) at (4,-2) {$\Db(\BB)$};
\draw[->] (Acon) --
node[above] { $\scriptstyle \res$} (A);
\draw[->] (Bcon) --
 node[above] { $\scriptstyle \res$} (B);
\draw[->] (Acon) --node [left] {$\scriptstyle F_{\upalpha}$} node [right] {$\scriptstyle\sim$} (Bcon);
\draw[->] (A) --node [left] {$\scriptstyle \Upphi_{\upalpha}$} node [right] {$\scriptstyle\sim$} (B);
\end{tikzpicture}
\end{array}
\label{commutes for alpha}
\end{equation}
just by composition:  by \cite[1.1]{August2}, respectively \cite[3.2]{August2}, both of the following commute.
\[
\begin{array}{ccc}
\begin{array}{c}
\begin{tikzpicture}
\node (Acon) at (0,0) {$\Db(\CA)$};
\node (A) at (3.5,0) {$\Db(\AA)$};
\node (Bcon) at (0,-2) {$\Db(\upnu_i\CA)$};
\node (B) at (3.5,-2) {$\Db(\upnu_i\AA)$};
\draw[->] (Acon) --
node[above] { $\scriptstyle \res$} (A);
\draw[->] (Bcon) --
 node[above] { $\scriptstyle \res$} (B);
\draw[->] (Acon) --node [left] {$\scriptstyle F_{i}$} node [right] {$\scriptstyle\sim$} (Bcon);
\draw[->] (A) --node [left] {$\scriptstyle \Upphi_{i}$} node [right] {$\scriptstyle\sim$} (B);
\end{tikzpicture}
\end{array}
&&
\begin{array}{c}
\begin{tikzpicture}
\node (Acon) at (0,0) {$\Db(\CA)$};
\node (A) at (3.5,0) {$\Db(\AA)$};
\node (Bcon) at (0,-2) {$\Db(\upnu_i\CA)$};
\node (B) at (3.5,-2) {$\Db(\upnu_i\AA)$};
\draw[->] (Acon) --
node[above] { $\scriptstyle \res$} (A);
\draw[->] (Bcon) --
 node[above] { $\scriptstyle \res$} (B);
\draw[->] (Acon) --node [left] {$\scriptstyle F_{i}^{-1}$} node [right] {$\scriptstyle\sim$} (Bcon);
\draw[->] (A) --node [left] {$\scriptstyle \Upphi_{i}^{-1}$} node [right] {$\scriptstyle\sim$} (B);
\end{tikzpicture}
\end{array}
\end{array}
\]

Later, the following is one of the crucial ingredients in establishing that $\Stab\Db(\Lambda_\con)$ is a covering space. Recall that for the standard derived equivalence $F_\upalpha$ associated to a path $\upalpha$, the induced map on the K-theory is denoted $\mathsf{F}_\upalpha$.
 
\begin{prop}\label{trivial K} 
Suppose that $\upbeta\colon C\to D$ is a positive minimal path in $\mathrm{Free}(\Gamma_\scrH)$.
\begin{enumerate}
\item\label{trivial K 1} 
If $\upalpha\colon C\to D$ is any positive path, then $\mathsf{F}_{\upalpha}=\mathsf{F}_{\upbeta}$. 
 \item\label{trivial K 2} If $\upalpha\in\End_{\dsG}(C)$, then $\mathsf{F}_{\upalpha}={\rm Id}$.
 \item\label{trivial K 3} If $\upalpha,\upgamma\in\Hom_{\dsG}(C,D)$, then $\mathsf{F}_{\upalpha}=\mathsf{F}_{\upgamma}$.
 \end{enumerate}
\end{prop}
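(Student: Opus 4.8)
The plan is to prove the three statements in order, deriving each from the previous, and to reduce everything to the single computation recorded in Lemma~\ref{invol}, namely that the elementary matrices $\mathsf{F}_i$ are involutions controlled by the $b_{ij}$.

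First I would prove~\eqref{trivial K 1}. The key point is that the K-theory class $\mathsf{F}_\upbeta$ of a positive \emph{minimal} path $\upbeta\colon C\to D$ depends only on the endpoints $C,D$ and not on the particular minimal path chosen. For minimal paths this is immediate from Proposition~\ref{terminal ok}: any two positive minimal paths with the same source and target are functorially isomorphic, hence induce the same map on $\mathsf{G}_0$. So it suffices to compare an arbitrary positive path $\upalpha\colon C\to D$ against a fixed minimal one. I would argue by induction on the length $\ell(\upalpha)$. If $\upalpha$ is already minimal, we are done. Otherwise, the standard combinatorics of the simplicial arrangement $\scrH$ (the analogue of the word property / the existence of a reduced expression) lets us rewrite $\upalpha$. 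The cleanest way to phrase this is via the relation $\sim$ generating $\dsG^+$: in a simplicial arrangement, any positive path is related, through insertions and deletions of pairs crossing the \emph{same} wall, to a minimal path of the same endpoints. Concretely, a non-minimal path must either contain a backtrack $s_i\circ s_i$ across one wall, or fail to be geodesic and hence can be shortened by replacing a subpath between two chambers with a minimal one. In the first case, $\mathsf{F}_i\mathsf{F}_i=\mathrm{Id}$ by Lemma~\ref{invol}, so deleting the backtrack does not change $\mathsf{F}_\upalpha$ and shortens the path; in the second case the replaced subpaths agree on K-theory by Proposition~\ref{terminal ok}. Either way we reduce $\ell(\upalpha)$ while preserving $\mathsf{F}_\upalpha$, and induction finishes.

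Statement~\eqref{trivial K 2} then follows immediately. Any $\upalpha\in\End_{\dsG}(C)$ is, by definition of the groupoid completion, a composite of positive generators and their formal inverses. Grouping these, $\mathsf{F}_\upalpha$ is a product of matrices $\mathsf{F}_{i}^{\pm 1}$, and by Lemma~\ref{invol} each $\mathsf{F}_i^{-1}=\mathsf{F}_i$, so $\mathsf{F}_\upalpha$ is represented by a product of the elementary matrices. Applying part~\eqref{trivial K 1} with $C=D$ and the trivial minimal path (the empty path, whose induced map is the identity), we get $\mathsf{F}_\upalpha=\mathrm{Id}$. For~\eqref{trivial K 3}, given two morphisms $\upalpha,\upgamma\in\Hom_{\dsG}(C,D)$, the composite $\upgamma^{-1}\circ\upalpha$ lies in $\End_{\dsG}(C)$, so by~\eqref{trivial K 2} its induced map is the identity; since $\mathsf{F}$ is multiplicative, $\mathsf{F}_\upgamma^{-1}\mathsf{F}_\upalpha=\mathrm{Id}$, giving $\mathsf{F}_\upalpha=\mathsf{F}_\upgamma$.

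The main obstacle I expect is entirely in part~\eqref{trivial K 1}: making rigorous the reduction of an arbitrary positive path to a minimal one while tracking the K-theory map. This is where one needs the geometry of the simplicial arrangement $\scrH$, specifically that between any two chambers the galleries behave like reduced words in a Coxeter-style setting, so that every positive path can be straightened using only same-wall cancellations (handled by the involutivity in Lemma~\ref{invol}) together with replacements of minimal subpaths (handled by Proposition~\ref{terminal ok}). If one prefers to avoid a hands-on gallery argument, an alternative is to invoke directly that the assignment $\upalpha\mapsto\mathsf{F}_\upalpha$ already descends to a functor on $\dsG$, as established for the functors $F_\upalpha$ in Subsection~\ref{def of Falpha}; then~\eqref{trivial K 1}--\eqref{trivial K 3} become the statement that this induced functor sends the relation $\sim$ to equalities on K-theory, which reduces again to the single relation $\mathsf{F}_i\mathsf{F}_i=\mathrm{Id}$ of Lemma~\ref{invol}. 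I would present the argument this second way if the functoriality on $\dsG$ is taken as given, as it is the shortest route and isolates the one nontrivial computation.
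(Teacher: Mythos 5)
Your overall skeleton is the right one, and in fact it matches the argument the paper relies on: the paper's own proof of Proposition~\ref{trivial K} is a one-line reduction, via Lemma~\ref{invol}, to \cite[4.8]{HW2}, whose structure is exactly your induction for \eqref{trivial K 1} followed by your deductions of \eqref{trivial K 2} (converting formal inverses into reverse positive crossings using $\mathsf{F}_i^{-1}=\mathsf{F}_i$) and \eqref{trivial K 3} (multiplicativity). Parts \eqref{trivial K 2} and \eqref{trivial K 3} of your write-up are fine. The genuine gap is inside \eqref{trivial K 1}, at precisely the case your dichotomy mishandles. Proposition~\ref{terminal ok} identifies two positive \emph{minimal} paths with the same endpoints; such a replacement preserves length, so it neither shortens $\upalpha$ nor, by itself, creates a backtrack. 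Replacing a \emph{non-minimal} subpath by a minimal one, on the other hand, is statement \eqref{trivial K 1} for that subpath, so it is only available inductively for proper subpaths --- and a non-minimal positive path can have all proper subpaths minimal and contain no backtrack $s_i\circ s_i$: for two lines in $\bR^2$ with chambers $Q_1,\hdots,Q_4$ in cyclic order, the path $Q_1\to Q_2\to Q_3\to Q_4$ crosses one hyperplane twice (at the first and last steps) yet both length-two subpaths are minimal. So ``either way we reduce $\ell(\upalpha)$'' fails exactly where the work is. What the induction actually needs is the rearrangement lemma: if a positive minimal path ending at a chamber $E$ crosses a hyperplane $H$, there is a positive minimal path with the same endpoints crossing $H$ \emph{last}; since $E$ has a unique wall lying on $H$, after this replacement (legitimate by Proposition~\ref{terminal ok}) the two crossings of $H$ become adjacent, form an honest backtrack, and Lemma~\ref{invol} deletes them, dropping the length by two. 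That lemma is where simpliciality --- indeed the structure of intersection arrangements in ADE root systems --- enters, and it is exactly what the elementary proof of \cite[4.8]{HW2} supplies; your appeal to ``standard combinatorics'' and ``the analogue of the word property'' names this statement rather than proving or precisely citing it.

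Your proposed shortcut in the final paragraph is not a repair but a circularity. That $\upalpha\mapsto\mathsf{F}_\upalpha$ descends to $\dsG$ is automatic from Proposition~\ref{terminal ok}, but descent only says that $\sim$-equivalent positive paths (which necessarily have equal length) induce equal matrices. The proposition asserts strictly more: the whole group $\End_{\dsG}(C)\cong\uppi_1(\scrX)$ maps to the identity matrix. For instance, the length-two loop crossing a wall and returning is \emph{not} the identity morphism of $\dsG$ --- it generates the local fundamental group around that hyperplane --- so no combination of descent through $\sim$ with the single relation $\mathsf{F}_i\mathsf{F}_i=\mathrm{Id}$ yields \eqref{trivial K 2}; the ``shortest route'' silently presupposes the same missing straightening lemma, and the induction of your first route, properly equipped with that lemma, is unavoidable.
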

\begin{proof}
Having established Lemma~\ref{invol}, this is now word-for-word identical to \cite[4.8]{HW2}.   Note that this proof is elementary, and does not require Deligne normal form.  
\end{proof}

\subsection{The Dual Composition}\label{dual comp subsection}
For each $L \in \Mut_0(N)$,  consider $\Lambda_L\colonequals\End_R(L)$ and recall that $\Lambda \colonequals \End_R(N)$.  Choose a positive minimal path $\upbeta \colon C_L \to C_+$, which in turn gives rise to a derived equivalence
\[
\Upphi_L \colonequals \Upphi_\upbeta 
\stackrel{\scriptstyle\ref{terminal ok}}{\cong} \RHom_{\Lambda_L}(\Hom_R(L,N), -) \colon \Db(\mod\Lambda_L) \to \Db(\mod\Lambda).
\]
This derived equivalence is independent of choice of positive minimal path, by Proposition~\ref{terminal ok}.  It induces an isomorphism $\mathsf{K}_0(\perf\Lambda_L)\to\mathsf{K}_0(\perf\Lambda)$ on the K-theory of perfect complexes, so write
\[
[\Upphi_L(\scrQ_i)]=\sum_{j=0}^n(\upvarphi_L)_{ij}[\scrP_j]
\]
in $\mathsf{K}_0(\perf\Lambda)\cong\mathbb{Z}^{n+1}$, where $\scrQ_i =\Hom_R(L,L_i)$ and $\scrP_i =\Hom_R(N,N_i)$.  Since $\upalpha$ is a sequence of mutations that do not involve mutating the zeroth summand, at each stage the zeroth summand is fixed.  Hence this isomorphism descends to an isomorphism 
\[
\upvarphi_L\colon \mathsf{K}_0(\perf\Lambda_L)/[\scrQ_0]\xrightarrow{\sim}\mathsf{K}_0(\perf\Lambda)/[\scrP_0].
\]
Basing the first by $[\scrQ_1],\hdots,[\scrQ_n]$ and the second by $[\scrP_1],\hdots,[\scrP_n]$, the matrix representing the isomorphism is $(\upvarphi_L)_{ij}$ for $1\leq i,j\leq n$.  By Remark~\ref{dual proj basis}, later we will think of these bases as $\mathbf{e}'_1,\hdots,\mathbf{e}'_n$ of $\mathsf{G}_0(\uEnd_R(L))^*$ and $\mathbf{e}_1,\hdots,\mathbf{e}_n$ of $\mathsf{G}_0(\Lambda_\con)^*$ respectively.

\begin{rem}
The above description of $\upvarphi_L$ is in terms of projectives of the ambient $\End_R(L)$, since this is convenient later.  There is however a much more intrinsic description of $\upvarphi_L$ that does not rely on this larger algebra, via the two-term tilting complexes of the contraction algebra $\Lambda_{\con}$. In particular, in the language of $g$-vectors, $\upvarphi_L(\mathbf{e}'_i) = g^{L_i}$, where $g^{L_i}$ is the $g$-vector of the two-term complex of $\Lambda_{\con}$ associated to the rigid object $L_i$ via the bijection \cite[2.18]{August1}. We do not use this description below.
\end{rem}

\section{Stability and t-structures}\label{Section 3}

\subsection{Stability Generalities}\label{stab gen} 
Throughout this subsection, $\scrT$ denotes a triangulated category whose Grothendieck group $K_0(\scrT)$ is a finitely generated free $\bZ$-module.

\begin{prop}[{\cite[5.3]{B07}}]\label{stab prop}
To give a stability condition on $\scrT$ is equivalent to giving a bounded t-structure $\scrT$ with heart $\cA$, and a group homomorphism $Z\colon \mathsf{K}_0(\cA)\to \mathbb{C}$, called the central charge, such that for all $0\neq E\in \cA$ the complex number $Z(E)$ lies in the semi-closed upper half-plane
\[
\mathbb{H} \colonequals \{ re^{i\uppi \upvarphi} \mid r>0,\, 0<\upvarphi\leq 1\},
\]
and where furthermore $Z$ must satisfy the Harder--Narasimhan property.
\end{prop}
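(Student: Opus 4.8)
The plan is to identify the data in the statement with Bridgeland's original formulation of a stability condition as a pair $(\cP, Z)$, where $Z\colon\mathsf{K}_0(\scrT)\to\mathbb{C}$ is a central charge and $\cP$ is a \emph{slicing}: a family of full additive subcategories $\cP(\upvarphi)\subseteq\scrT$, one for each $\upvarphi\in\mathbb{R}$, satisfying $\cP(\upvarphi+1)=\cP(\upvarphi)[1]$, the orthogonality $\Hom(A_1,A_2)=0$ whenever $A_i\in\cP(\upvarphi_i)$ with $\upvarphi_1>\upvarphi_2$, the constraint $Z(E)\in\mathbb{R}_{>0}\,e^{i\uppi\upvarphi}$ for $0\neq E\in\cP(\upvarphi)$, and the Harder--Narasimhan property that every nonzero object filters into semistable factors of strictly decreasing phase. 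I would then exhibit mutually inverse passages between such pairs and the triples in the statement.

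For the forward direction, given a slicing I set $\cA\colonequals\cP((0,1])$, the extension closure of all $\cP(\upvarphi)$ with $\upvarphi\in(0,1]$. The interval being half-open of length one is exactly what forces $\cA$ to be the heart of a bounded t-structure: the orthogonality axiom yields the defining $\Hom$-vanishing between $\cA$ and its positive shifts, while the slicing's Harder--Narasimhan filtrations provide the truncation triangles and boundedness. As $\cA$ is the heart of a bounded t-structure, the inclusion induces an isomorphism $\mathsf{K}_0(\cA)\xrightarrow{\sim}\mathsf{K}_0(\scrT)$, so $Z$ descends to a homomorphism $\mathsf{K}_0(\cA)\to\mathbb{C}$. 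The central-charge constraint places $Z(E)$ in $\mathbb{H}$ for each semistable $E\in\cP(\upvarphi)$, $\upvarphi\in(0,1]$; since $\mathbb{H}$ is a convex subcone of $\mathbb{C}$ closed under addition, additivity of $Z$ on short exact sequences propagates this to all of $\cA$.

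For the reverse direction, I start from a heart $\cA$, a homomorphism $Z$ with $Z(\cA\setminus 0)\subseteq\mathbb{H}$, and the Harder--Narasimhan property, and rebuild the slicing. For $0\neq E\in\cA$ put $\upvarphi(E)\colonequals\tfrac{1}{\uppi}\arg Z(E)\in(0,1]$, call $E$ semistable when $\upvarphi(A)\leq\upvarphi(E)$ for every nonzero subobject $A\subseteq E$ in $\cA$, and let $\cP(\upvarphi)$ consist of the semistable objects of phase $\upvarphi$ for $\upvarphi\in(0,1]$, extended to all real arguments by $\cP(\upvarphi+n)\colonequals\cP(\upvarphi)[n]$. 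The central-charge constraint and the shift compatibility are then immediate, and the orthogonality $\Hom=0$ across strictly decreasing phases follows from the sub- and quotient-object characterisation of semistability, since the image of a nonzero map between semistable objects would have phase squeezed between $\upvarphi_1$ and $\upvarphi_2$, an impossibility when $\upvarphi_1>\upvarphi_2$ (the case of widely separated phases reducing to this one via the t-structure's $\Hom$-vanishing between shifts).

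The main obstacle is verifying the Harder--Narasimhan property for the reconstructed slicing on all of $\scrT$, not merely on $\cA$, and confirming the two passages are mutually inverse. Here I would use the bounded t-structure to reduce to the heart: each $0\neq E$ has finitely many nonzero cohomology objects $H^k(E)\in\cA$, which after the appropriate shift occupy disjoint, totally ordered phase windows indexed by $k$, and each $H^k(E)$ carries a finite HN filtration by hypothesis. Concatenating these filtrations produces semistable factors whose phases are strictly decreasing, precisely because the contributions of distinct cohomological degrees never overlap. The genuinely delicate point, exactly as in \cite[\S5]{B07}, is to check that this concatenation assembles into a bona fide iterated triangle filtration of $E$ and that it is unique; uniqueness is then formal from the orthogonality axiom, and a direct comparison shows that starting from $(\cP,Z)$, forming $\cA$, and re-extracting the slices returns the original $\cP$, completing the bijection.
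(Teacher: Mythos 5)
Your proposal is correct and follows exactly the argument of Bridgeland's \cite[5.3]{B07}, which the paper itself does not reprove but simply cites: passing from a slicing to the heart $\cP((0,1])$ and back, with the only delicate point (which you correctly flag) being the assembly and uniqueness of Harder--Narasimhan filtrations on all of $\scrT$ by concatenating the filtrations of the cohomology objects $H^k(E)$ across disjoint phase windows. The one small omission is that in the forward direction you should also note that the slicing's HN filtration of an object of $\cA$ has all factors in $\cP((0,1])=\cA$, so that $Z$ restricted to $\cA$ genuinely satisfies the abelian-category HN property, but this is standard and does not affect the approach.
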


Write $\Stab\scrT$ for the set of \emph{locally-finite}  stability conditions on $\scrT$.  We do not define these here, as below this condition is automatic for all stability conditions on $\Db(\Lambda_\con)$, since all hearts of bounded t-structures will be equivalent to finite dimensional modules on some finite dimensional algebra.

 \begin{thm}[{\cite[1.2]{B07}}]\label{stab thm}
The space $\Stab\scrT$ has the structure of a complex manifold, and the forgetful map
\[
\Stab\scrT\to\Hom_{\bZ}(\mathsf{K}_0(\scrT),\bC)
\]
is a local isomorphism onto an open subspace of $\Hom_{\bZ}(\mathsf{K}_0(\scrT),\bC)$.
\end{thm}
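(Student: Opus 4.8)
The plan is to follow Bridgeland's original strategy for the deformation theorem: equip $\Stab\scrT$ with a natural topology, show that the central charge map $\mathcal{Z}\colon \sigma\mapsto Z_\sigma$ is simultaneously locally injective and an open map admitting local sections, and then transport the complex structure of $\Hom_{\bZ}(\mathsf{K}_0(\scrT),\bC)\cong\bC^n$ back through the resulting local charts. First I would introduce the generalized metric that controls the topology. For $\sigma=(Z,\cP)\in\Stab\scrT$ and $0\neq E\in\scrT$, write $\phi^+_\sigma(E)$ and $\phi^-_\sigma(E)$ for the top and bottom phases appearing in the Harder--Narasimhan filtration of $E$, and $m_\sigma(E)$ for its mass $\sum_i|Z(A_i)|$ over the HN factors $A_i$. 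Then set
\[
d(\sigma,\tau)=\sup_{0\neq E}\left\{\,|\phi^+_\sigma(E)-\phi^+_\tau(E)|,\ |\phi^-_\sigma(E)-\phi^-_\tau(E)|,\ \left|\log\tfrac{m_\tau(E)}{m_\sigma(E)}\right|\,\right\}\in[0,\infty].
\]
A routine check shows $d$ is a generalized metric, inducing a topology on $\Stab\scrT$ in which $\mathcal{Z}$ is continuous.

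Next come the two technical pillars. \emph{Rigidity:} if $d(\sigma,\tau)<1$ and $Z_\sigma=Z_\tau$, then $\sigma=\tau$. The point is that with equal central charges and all corresponding phases within distance strictly less than $1$, the uniqueness of HN filtrations forces $\cP_\sigma(\phi)=\cP_\tau(\phi)$ for every $\phi$, so $\mathcal{Z}$ is injective on small metric balls. \emph{Deformation:} given $\sigma=(Z,\cP)$ locally finite and $0<\epsilon<\tfrac18$, any group homomorphism $W\colon\mathsf{K}_0(\scrT)\to\bC$ satisfying $|W(E)-Z(E)|<\sin(\pi\epsilon)\,|Z(E)|$ for all $\sigma$-semistable $E$ is the central charge of a unique stability condition $\tau=(W,\cQ)$ with $d(\sigma,\tau)<\epsilon$. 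I would construct $\cQ$ by refining the existing HN data: each thin slice $\cP((\phi-\epsilon,\phi+\epsilon))$ is a quasi-abelian category on which the perturbed charge $W$ restricts to a stability function, producing new $W$-semistable objects of phase near $\phi$; patching these across all $\phi$ yields a slicing $\cQ$, and one verifies that $\tau=(W,\cQ)$ satisfies the Harder--Narasimhan property of Proposition~\ref{stab prop} and remains locally finite.

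Combining the two pillars, $\mathcal{Z}$ is a local homeomorphism onto an open subspace of $\Hom_{\bZ}(\mathsf{K}_0(\scrT),\bC)$: rigidity gives local injectivity, while deformation supplies openness and continuous local inverses. Pulling back the standard complex structure of $\bC^n$ through these charts makes $\Stab\scrT$ a complex manifold; since any two overlapping charts are both restrictions of $\mathcal{Z}$, their transition map is the identity of an open subset of $\bC^n$, hence trivially biholomorphic, and so $\mathcal{Z}$ is a local isomorphism of complex manifolds. The main obstacle is squarely the deformation step: building the perturbed slicing $\cQ$ requires the quasi-abelian structure of interval slices together with an existence argument for Harder--Narasimhan filtrations under a perturbation of the central charge, and then checking that the patched data genuinely assembles into a locally-finite stability condition realizing the prescribed charge $W$. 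Everything else is formal bookkeeping once this deformation lemma is in hand.
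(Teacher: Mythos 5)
Your sketch is a faithful outline of Bridgeland's original proof of this theorem, which the paper does not prove but simply quotes from \cite[1.2]{B07}: the generalized metric, the rigidity statement (his Lemma 6.4, with the bound $d(\upsigma,\uptau)<1$), the deformation theorem (his Theorem 7.1, with exactly the bound $|W(E)-Z(E)|<\sin(\uppi\upepsilon)|Z(E)|$ on semistables and the construction of the new slicing inside thin quasi-abelian slices $\cP((\upphi-\upepsilon,\upphi+\upepsilon))$), and the observation that the charts are restrictions of the forgetful map itself, so the complex structure pulls back trivially. The one point where your ``openness'' step needs a word of care is that in Bridgeland's full generality the deformation bound cuts out an open set only inside the linear subspace of $\Hom_{\bZ}(\mathsf{K}_0(\scrT),\bC)$ on which the seminorm $\sup\{|(W-Z)(E)|/|Z(E)| : E \ \upsigma\text{-semistable}\}$ is finite, so the image is a priori open only in that subspace; under the finiteness hypotheses in force here (finite-rank $\mathsf{K}_0$, with all hearts arising being finite-length with finitely many simples whose charges lie in $\mathbb{H}$) this subspace is all of $\Hom_{\bZ}(\mathsf{K}_0(\scrT),\bC)$, so your argument establishes the statement exactly as quoted.
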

 Any triangle equivalence $\Upphi\colon \scrT\to \scrT'$ induces a natural map 
\[
\Upphi_*\colon \Stab\scrT\to\Stab\scrT'
\] 
defined by $\Upphi_*(Z,\cA)\colonequals (Z\circ \upphi^{-1},\Upphi(\cA))$, where $\upphi^{-1}$ is the corresponding isomorphism on K-theory $\mathsf{K}_0(\scrT')\simto \mathsf{K}_0(\scrT)$ induced by the functor $\Upphi^{-1}$. In this way,  the group $\Auteq(\scrT)$ of isomorphism classes of autoequivalences of $\scrT$  acts on $\Stab\scrT$.

\subsection{t-structures for \texorpdfstring{$\Db(\Lambda_\con)$}{the derived category}}\label{t-structures cont alg section}
The contraction algebra $\Lambda_\con$ is a \emph{silting-discrete} symmetric algebra \cite[3.3, 4.12]{August1}. Being symmetric, the technical condition of being silting-discrete is equivalent \cite[2.11]{AM} to there being only finitely many basic tilting complexes between $P$ and $P[1]$ (with respect to the silting order $\leq $), for every tilting complex $P$ obtained by iterated irreducible left mutation from the free module $\Lambda_\con$.  Geometrically, for each such $P$, this set is finite since it is in bijection with $R$-schemes obtained by iterated flops of irreducible curves starting from $X$, which is well-known to be finite.

This fact has the following remarkable consequence.

\begin{prop}\label{hearts known}
Suppose that $\scrA$ is the heart of a bounded t-structure on $\Db(\Lambda_\con)$.  Then $\scrA =\scrA_{\upalpha}$  for some $L\in\Mut_0(M)$ and for some $\upalpha\in\Hom_{\dsG}(C_L,C_+)$, where
\begin{align*}
\scrA_\upalpha \colonequals F_{\upalpha}(\mod \uEnd_R(L))
\end{align*}
and $F_{\upalpha}$ is the derived equivalence from Subsection~\ref{def of Falpha} associated to $\upalpha$. 
\end{prop}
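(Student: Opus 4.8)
The plan is to use the silting-discreteness of $\Lambda_\con$ to eliminate all ``exotic'' hearts, and then to realise each surviving heart by transporting a standard module category along a composite wall-crossing functor. Concretely, the first move is to reduce the classification of hearts to that of silting objects. Since $\Lambda_\con$ is silting-discrete \cite[3.10]{August1}, the general theory of such algebras \cite{AM, PSZ} guarantees that \emph{every} bounded t-structure on $\Db(\Lambda_\con)$ has a length heart with finitely many simple objects, and that these algebraic hearts are in bijection (K\"onig--Yang) with the basic silting objects of $\Kb(\proj\Lambda_\con)$. This input is essential rather than cosmetic: any candidate heart $\scrA_\upalpha=F_\upalpha(\mod\uEnd_R(L))$ is automatically a length category, being the equivalent image of a module category, so without first ruling out non-length hearts there would be nothing to prove. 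Granting it, the task becomes the classification of silting objects.

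Next I would convert silting mutations into wall crossings. Silting-discreteness yields silting-connectedness, so every basic silting object is reached from $\Lambda_\con$ by a finite sequence of irreducible silting mutations, in either direction. Feeding in the dictionary of \cite{August1, August2, IW9}, the two-term silting complexes of $\Lambda_\con$ are indexed by the chambers $C_L$ of $\scrH$ with $L\in\Mut_0(N)$; an irreducible mutation corresponds to crossing a codimension-one wall labelled $s_i$, which by Theorem~\ref{intertwinement} is the geometric mutation $\upnu_i$; and the single equivalence realising it is the wall-crossing functor $F_i$. Stringing these crossings together along a route from a chamber $C_L$ back to $C_+$ assembles into a morphism $\upalpha\in\Hom_{\dsG}(C_L,C_+)$, with composite equivalence $F_\upalpha$ as defined in Subsection~\ref{def of Falpha}. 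The silting object at the end of the route is then the $F_\upalpha$-image of the free module $\uEnd_R(L)$, so its heart is $F_\upalpha(\mod\uEnd_R(L))=\scrA_\upalpha$, which is exactly the desired form.

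The real work, and where I expect the main obstacle, is the translation between the abstract silting combinatorics and the chamber geometry. Two points must be nailed down. First, that a single irreducible mutation is realised, up to functorial isomorphism, precisely by some $F_i$: the decisive fingerprint is the computation $F_i(\scrS'_i)\cong\scrS'_i[-1]$ from Lemma~\ref{k-correspond simples}, since sending one simple to its shift while fixing the others is exactly the effect of the corresponding simple tilt. Second, that the endomorphism algebra of every silting object is forced into the finite list $\uEnd_R(L)$, $L\in\Mut_0(N)$ — each silting object covers a unique base chamber $C_L$, which pins down the source $\Db(\uEnd_R(L))$ of the realising equivalence — so that the whole silting exchange graph is identified with the skeleton of the universal cover of $\scrH$. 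This finiteness is exactly what silting-discreteness (finiteness of tilting complexes between $P$ and $P[1]$) together with the mutation combinatorics of \cite{IW9} for rigid generators deliver. By contrast, well-definedness comes for free: two routes $C_L\to C_+$ give functorially isomorphic $F_\upalpha$ by Proposition~\ref{terminal ok} and the descent to $\dsG$ in Subsection~\ref{def of Falpha}, hence the same heart, so no separate path-independence argument is needed. I expect the bookkeeping aligning these two pictures, rather than any single hard estimate, to be the crux.
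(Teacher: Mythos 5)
Your proposal is correct and follows essentially the same route as the paper: silting-discreteness via \cite{PSZ} forces every heart to be length, the K\"onig--Yang bijection produces a silting complex realising it, symmetry of $\Lambda_\con$ plus silting-connectedness writes that complex as a mutation path $\upmu_\upbeta\Lambda_\con$ (the paper cites \cite[2.8]{August1} and \cite[3.8(1)]{August1} for exactly the ``real work'' you isolate), and transporting $\mod\uEnd_R(L)$ along $F_{\upbeta^{-1}}$ gives $\scrA=\scrA_{\upbeta^{-1}}$. Your Hom-vanishing transport step is precisely the paper's computation $F_\upbeta(\scrA)=\{y\mid\Hom(\CB,y[i])=0 \mbox{ for } i\neq 0\}=\mod\CB$, so there is no gap.
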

\begin{proof}
Since $\Lambda_\con$ is silting-discrete, necessarily $\scrA$ has finite length  \cite{PSZ}.  Furthermore, by the bijections in \cite[\S5]{KY} there exists a silting complex $T$ in $\Db(\Lambda_\con)$ such that, in the notation of \cite[\S5.4]{KY},
\begin{equation}
\scrA=\scrC^{\leq 0}\cap\scrC^{\geq 0}=\{ x\in\Db(\Lambda_\con)
\mid
\Hom_{\Db(\Lambda_\con)}(T,x[i])=0\mbox{ for all }i\neq 0\}.\label{KY heart}
\end{equation}
Since $\Lambda_\con$ is symmetric, silting equals tilting, and so $T$ is a tilting complex.  It is already known (see  \cite[2.12]{August1}) that every tilting complex $T$ in $\Db(\Lambda_\con)$ can be obtained as a composition of mutations from $\Lambda_\con$, so say $T\cong\upmu_\upbeta\Lambda_\con$ for some $\upbeta\in\Hom_{\dsG}(C_+,C_L)$.  Set $\CB\colonequals\uEnd_R(L)$, then \cite[3.10(1)]{August1} gives $F_\upbeta(\upmu_\upbeta\Lambda_\con)\cong \CB$, and hence $F_\upbeta(T)\cong\CB$.

Thus applying $F_\upbeta$ to \eqref{KY heart}, 
\[
F_\upbeta(\scrA)
=\{ y\in\Db(\CB)
\mid
\Hom(\CB,y[i])=0\mbox{ for all }i\neq 0\}
=\mod\CB,
\]
and so applying $F_\upbeta^{-1}=F_{\upbeta^{-1}}$ shows that $\scrA=F_{\upbeta^{-1}}(\mod\CB)$.  Since  $\upbeta^{-1}\in\Hom_{\dsG}(C_L,C_+)$, the result follows. 
\end{proof}

Recall that inside $\Db(\Lambda_\con)$ are the simples $\scrS_1,\hdots,\scrS_n$, which base the K-theory $\mathsf{G}_0(\Lambda_\con)$.  In a similar way, the simple modules $\scrS'_1, \hdots \scrS'_n$ of $\uEnd_R(L)$ base its Grothendieck group.

\begin{cor}\label{arb point}
If $\upsigma\in\Stab\Db(\Lambda_\con)$, then $\upsigma=(Z,\scrA_{\upalpha})$ for some $\upalpha\in\Hom_{\dsG}(C_L,C_+)$ and some $Z$ satisfying $Z(\mathsf{F}_{\upalpha}[\scrS'_i]) \in \mathbb{H}$ for all $i=1,\hdots,n$.
\end{cor}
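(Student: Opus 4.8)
The plan is to read this off directly from the two structural inputs already in place: Proposition~\ref{stab prop}, which records what a stability condition is, and Proposition~\ref{hearts known}, which determines all possible hearts. The corollary is essentially a repackaging of these, so I expect the argument to be short, with the genuine content having been discharged upstream in Proposition~\ref{hearts known}. Concretely, given an arbitrary $\upsigma\in\Stab\Db(\Lambda_\con)$, I would first apply Proposition~\ref{stab prop} to present it as $\upsigma=(Z,\scrA)$, where $\scrA$ is the heart of a bounded t-structure and $Z\colon\mathsf{K}_0(\scrA)\to\bC$ is a central charge with $Z(E)\in\mathbb{H}$ for every nonzero $E\in\scrA$. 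Since $\Lambda_\con$ is silting-discrete, Proposition~\ref{hearts known} then applies to identify $\scrA=\scrA_\upalpha=F_\upalpha(\mod\uEnd_R(L))$ for some $L\in\Mut_0(N)$ and some $\upalpha\in\Hom_{\dsG}(C_L,C_+)$. This simultaneously fixes the heart and the path $\upalpha$, so it remains only to extract the stated positivity constraint on $Z$.

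For that step I would use that $F_\upalpha\colon\Db(\uEnd_R(L))\to\Db(\Lambda_\con)$ is a triangle equivalence carrying the finite length abelian category $\mod\uEnd_R(L)$ onto $\scrA_\upalpha$. Because equivalences preserve the abelian structure and send simples to simples, the objects $F_\upalpha(\scrS'_i)$ for $i=1,\dots,n$ are precisely the nonzero simple objects of $\scrA_\upalpha$, where $\scrS'_1,\dots,\scrS'_n$ are the simples of $\uEnd_R(L)$. In particular each $F_\upalpha(\scrS'_i)$ is a nonzero object of the heart $\scrA$, so $Z(F_\upalpha(\scrS'_i))\in\mathbb{H}$ is immediate from Proposition~\ref{stab prop}. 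Passing to K-theory in $\mathsf{G}_0(\Lambda_\con)$ and using $[F_\upalpha(\scrS'_i)]=\mathsf{F}_\upalpha[\scrS'_i]$ rewrites this exactly as $Z(\mathsf{F}_\upalpha[\scrS'_i])\in\mathbb{H}$ for all $i$, which is the assertion.

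I do not anticipate a genuine obstacle here, precisely because only the forward implication is needed: the simples lie in the heart, so their charges automatically lie in $\mathbb{H}$, and no closure of $\mathbb{H}$ under positive sums has to be invoked (that property would only be relevant for the converse parametrization). Both the Harder--Narasimhan property and local finiteness are automatic, since $\scrA_\upalpha$ is a finite length module category over a finite dimensional algebra, as already noted in Subsection~\ref{stab gen}. The only point demanding attention is the bookkeeping that identifies the simples of $\scrA_\upalpha$ with the images $F_\upalpha(\scrS'_i)$ and the class $[F_\upalpha(\scrS'_i)]$ with $\mathsf{F}_\upalpha[\scrS'_i]$; all the real difficulty has been absorbed into establishing Proposition~\ref{hearts known} via silting-discreteness.
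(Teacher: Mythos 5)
Your proposal is correct and takes essentially the same route as the paper: both invoke Proposition~\ref{hearts known} to write the heart as $\scrA_\upalpha=F_\upalpha(\mod\uEnd_R(L))$, observe that the simples of $\scrA_\upalpha$ are the objects $F_\upalpha(\scrS'_i)$, and then read off $Z([F_\upalpha(\scrS'_i)])=Z(\mathsf{F}_\upalpha[\scrS'_i])\in\mathbb{H}$ from the definition of a stability condition. Your side remark that only the forward implication is needed (so no closure of $\mathbb{H}$ under positive combinations is invoked) is a mild sharpening of the paper's ``precisely when'' phrasing, but the substance is identical.
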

\begin{proof}
By Proposition~\ref{hearts known} every abelian heart is of the form $\scrA_\upalpha$ for some $\upalpha\in\Hom_{\dsG}(C_L,C_+)$, and hence every point of $\Stab\scrT$ is of the form $(Z, \scrA_{\upalpha})$. To be a stability condition is equivalent to the map $Z\colon \mathsf{K}_0(\scrA_\upalpha)\to\mathbb{C}$ sending all simples of $\scrA_\upalpha$ to $\mathbb{H}$.  Since the simples of $\scrA_\upalpha$ are of the form $F_{\upalpha}(\scrS'_i)$, it follows that $(Z,\scrA_\upalpha)$ is a stability condition precisely when $Z$ satisfies $Z([F_{\upalpha}(\scrS'_i)])=Z(\mathsf{F}_{\upalpha}[\scrS'_i]) \in \mathbb{H}$ for all $i$.
\end{proof}

The general action of $\Auteq\Db(\Lambda_\con)$ on $\Stab\Db(\Lambda_\con)$ simplifies somewhat if we restrict to those standard equivalences given by  $\End_{\dsG}(C_+)$. The functorial assignment $\upalpha \to F_\upalpha$ defines a group homomorphism 
\[
\uppi_1(\scrX)\cong\End_\dsG(C_+) \to\Auteq\Db(\Lambda_\con)
\] 
and we set $\Br$ to be the image of this homomorphism.  Then, using Corollary~\ref{arb point} to describe the points of $\Stab\Db(\Lambda_\con)$, the action of $F_\upbeta\in\Br$ on $\Stab\Db(\Lambda_\con)$ is
\begin{equation}
F_\upbeta\cdot (Z,\scrA_{\upalpha}) 
=
(Z, F_\upbeta(\scrA_{\upalpha}))
=(Z, \scrA_{\upbeta\circ\upalpha}).\label{our action}
\end{equation}
since $\mathsf{F}_\upbeta=\Id$ by Proposition~\ref{trivial K}\eqref{trivial K 2}.

\section{Stability on Contraction Algebras as a Universal Cover}\label{Section 4}

In order to realise $\Stab\Db(\Lambda_\con)$ as a universal cover, fix the isomorphism
\[
\Hom_\mathbb{C}(\mathsf{G}_0(\Lambda_\con),\mathbb{C})
\xrightarrow{\sim}\mathbb{C}^n
\]
given by $Z\mapsto (Z[\scrS_1],\hdots,Z[\scrS_n])=\sum_{i=1}^nZ[\scrS_i]\,\mathbf{e}_i$, where $\scrS_1,\hdots,\scrS_n$ are the simples of $\Lambda_\con$.  Composing this with the forgetful map from Theorem \ref{stab thm}, we thus obtain 
\begin{equation}
p\colon \Stab\Db(\Lambda_\con)\to\Hom_\mathbb{C}(\mathsf{G}_0(\Lambda_\con),\mathbb{C})
\xrightarrow{\sim}\mathbb{C}^n.\label{forgetful}
\end{equation}
Combining with Corollary~\ref{arb point}, $p$ sends an arbitrary point $(Z,\scrA_\upalpha)$ to $(Z[\scrS_1],\hdots,Z[\scrS_n])$.   In this section we will show that $p$ is a regular covering map onto its image.  To do this, it will be convenient to also consider the stability manifolds of the other contraction algebras of $R$, and to track information between them.  

\begin{lem}\label{stab compatible}
For any $\upalpha\in\Hom_\dsG(C_L,C_+)$, the following diagram commutes
\[
\begin{tikzpicture}
\node (A1) at (0,0) {$\Stab\Db(\uEnd_R(L))$};
\node (B1) at (5,0) {$\Stab\Db(\Lambda_\con)$};
\node (A2) at (0,-1.5) {$\Hom(\mathsf{G}_0(\uEnd_R(L)),\mathbb{C})$};
\node (B2) at (5,-1.5) {$\Hom(\mathsf{G}_0(\Lambda_\con),\mathbb{C})$};
\node (A3) at (0,-3) {$\mathbb{C}^n$};
\node (B3) at (5,-3) {$\mathbb{C}^n$};
\draw[->] (A1) --node[above]{$\scriptstyle (F_\upalpha)_*$}(B1);
\draw[->] (A2) --node[above]{$\scriptstyle -\circ\mathsf{F}_\upalpha^{-1}$}(B2);
\draw[->] (A3) --node[above]{$\scriptstyle \upvarphi_L$}(B3);
\draw[bend right,->] (A1) --node[left]{$\scriptstyle $}(A2);
\draw[->] (B1) --node[right]{$\scriptstyle $}(B2);
\draw[->] (A2) -- node[right]{$\scriptstyle \sim$} (A3);
\draw[->] (B2) -- node[right]{$\scriptstyle \sim$} (B3);
\end{tikzpicture}
\]
where the topmost vertical arrows are the forgetful maps, $\mathsf{F}_\upalpha^{-1}$ is the image in K-theory of the inverse of the functor $F_\upalpha$ defined in Subsection~\ref{def of Falpha}, and $\upvarphi_L$ is defined in Subsection~\ref{dual comp subsection}. The right hand vertical composition is $p$.
\end{lem}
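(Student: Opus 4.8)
The plan is to verify that the diagram commutes by tracing an arbitrary stability condition around both routes and checking the two composite maps $\mathbb{C}^n\to\mathbb{C}^n$ agree. The diagram naturally splits into two stacked squares, and I would handle them separately. The lower square involves only linear-algebraic data on K-theory, so it should be routine; the upper square, relating the forgetful maps with the functorial pushforward $(F_\upalpha)_*$, is where the real content lies.

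First I would pin down the upper square. Take a point $\upsigma=(Z,\scrB)\in\Stab\Db(\uEnd_R(L))$, where $\scrB$ is a heart and $Z\colon\mathsf{G}_0(\uEnd_R(L))\to\mathbb{C}$. By the definition recalled after Theorem~\ref{stab thm}, the pushforward is $(F_\upalpha)_*(Z,\scrB)=(Z\circ\mathsf{F}_\upalpha^{-1},F_\upalpha(\scrB))$, where $\mathsf{F}_\upalpha^{-1}$ is the induced K-theory isomorphism. The left and top forgetful maps send $\upsigma$ first down to $Z$, then across to $Z\circ\mathsf{F}_\upalpha^{-1}$; the top and right maps send $\upsigma$ across to $(Z\circ\mathsf{F}_\upalpha^{-1},F_\upalpha(\scrB))$, then down to its central charge $Z\circ\mathsf{F}_\upalpha^{-1}$. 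These agree by the very definition of $(F_\upalpha)_*$, so the upper square commutes essentially tautologically.

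Next I would verify the lower square, which asserts that the identification $\Hom(\mathsf{G}_0(\uEnd_R(L)),\mathbb{C})\xrightarrow{\sim}\mathbb{C}^n$, followed by $\upvarphi_L$, equals precomposition by $\mathsf{F}_\upalpha^{-1}$, followed by the identification $\Hom(\mathsf{G}_0(\Lambda_\con),\mathbb{C})\xrightarrow{\sim}\mathbb{C}^n$. Both identifications are the coordinate maps with respect to the dual bases $\mathbf{e}'_1,\dots,\mathbf{e}'_n$ and $\mathbf{e}_1,\dots,\mathbf{e}_n$ described in Subsection~\ref{dual comp subsection} and Remark~\ref{dual proj basis}. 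Here the key input is Remark~\ref{dual proj basis}: the dual map $\mathsf{F}_\upalpha^*$, expressed on the dual bases $[\scrP_i]\leftrightarrow\mathbf{e}_i$, coincides with the matrix $\upvarphi_L$ built from the K-theory of projectives. Concretely, writing $\mathsf{F}_\upalpha^{-1}$ as a matrix in the simple bases, precomposition $-\circ\mathsf{F}_\upalpha^{-1}$ acts on a functional $Z$ by the transpose of $\mathsf{F}_\upalpha^{-1}$; after the coordinate identifications this is exactly the matrix $(\upvarphi_L)_{ij}$, since the Remark identifies the relevant transpose with $\upvarphi_L$ via the $[\scrP_i]$-basis. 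Thus the lower square commutes.

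The main obstacle is bookkeeping rather than ideas: one must be scrupulous that the map $\upvarphi_L$ of Subsection~\ref{dual comp subsection} (defined via projectives of $\End_R(L)$) really matches the \emph{transpose} of the simples-basis matrix $\mathsf{F}_\upalpha$ of Lemma~\ref{k-correspond simples}, with the indices descending correctly modulo the zeroth summand $[\scrP_0]$, $[\scrQ_0]$. This dual/transpose identification is precisely the content of Remark~\ref{dual proj basis} at the single-wall level, and it propagates to arbitrary $\upalpha$ by functoriality and Proposition~\ref{trivial K}, so once the single-mutation case is aligned, composing along $\upalpha$ presents no further difficulty. The conclusion that the right-hand vertical composite is $p$ is then immediate by comparing with the definition~\eqref{forgetful}.
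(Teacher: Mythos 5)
Your proposal is correct and takes essentially the same route as the paper: the top square is tautological from the definition of $(F_\upalpha)_*$, and the bottom square is the single-wall transpose identification of Remark~\ref{dual proj basis}, propagated along a path using Proposition~\ref{trivial K}. The paper simply makes your ``propagates by functoriality'' step explicit: it invokes Proposition~\ref{trivial K}\eqref{trivial K 3} to replace $\mathsf{F}_\upalpha^{-1}=\mathsf{F}_{\upalpha^{-1}}$ by $\mathsf{F}_\upbeta$ for a positive minimal path $\upbeta\colon C_+\to C_L$, dualises wall-by-wall into the chain of $\upvarphi_{i_j}$, and then observes that the reversed path $\overline{\upbeta}$ is again positive minimal, so Proposition~\ref{terminal ok} identifies the composite with $\upvarphi_L$.
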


\begin{proof}
The top square commutes by definition of $(\mathsf{F}_\upalpha)_*$. For the bottom square, by Proposition~\ref{trivial K}\eqref{trivial K 3} applied to $\upalpha^{-1}$, we have $\mathsf{F}_\upalpha^{-1}=\mathsf{F}_{\upalpha^{-1}}=\mathsf{F}_\upbeta$, where $\upbeta$ is a positive minimal path $C_+ \to C_L$.   Writing $\upbeta=s_{i_t}\circ\hdots\circ s_{i_1}$, then the middle map is the composition
\begin{equation}
(\mathsf{F}_\upbeta)^* \colonequals\quad \mathsf{G}_0(\uEnd_R(L))^*\xrightarrow{\mathsf{F}^*_{i_t}}\hdots\xrightarrow{\mathsf{F}^*_{i_1}}\mathsf{G}_0(\Lambda_\con)^*.\label{chain to hit}
\end{equation}
By Remark~\ref{dual proj basis}, each step is just the tracking of the projectives basing $\mathsf{K}_0(\perf)/[\scrQ_0]$ under the mutation functors. Hence \eqref{chain to hit} is precisely
\begin{equation}
\mathsf{G}_0(\uEnd_R(L))^*\xrightarrow{\upvarphi_{i_t}}\hdots\xrightarrow{\upvarphi_{i_1}}\mathsf{G}_0(\Lambda_\con)^*.\label{chain to hit 2}
\end{equation}
Consider the path $\overline{\upbeta}=s_{i_1}\circ\hdots\circ s_{i_t}\colon C_L\to C_+$. Being the opposite path to $\upbeta$, it follows that $\overline{\upbeta}$ is also positive minimal.  But then by Proposition~\ref{terminal ok} there is a functorial isomorphism
\[
\Upphi_L\colonequals \Upphi_{\overline{\upbeta}}\cong\Upphi_{i_1}\circ\hdots\circ\Upphi_{i_t}.
\]
Hence $\upvarphi_L$, the image of this functor in $\mathsf{K}_0(\perf)/[\scrP_0]$, realises \eqref{chain to hit 2}.
\end{proof}

As is standard, consider the subset of $\mathbb{C}^n$
\[
\mathbb{H}_+ \colonequals \left\{ \sum_{j=1}^n a_j \mathbf{e}_j \mid a_j \in \mathbb{H} \right\} \cong \mathbb{H}^n.  
\]

\begin{cor} \label{effect of p}
For any point of $\Stab\Db(\Lambda_\con)$, which is necessarily of the form $(Z, \scrA_\upalpha)$ for some $\upalpha\in\Hom_\dsG(C_L,C_+)$, 
\[
p(Z, \scrA_\upalpha) = \upvarphi_L \left( \sum_{i=1}^n Z(\mathsf{F}_\upalpha[\scrS_i'])\,\mathbf{e}'_i\right) \in \upvarphi_L( \mathbb{H}_+).
\]
\end{cor}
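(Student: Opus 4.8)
The plan is to combine the explicit description of an arbitrary stability condition from Corollary~\ref{arb point} with the commutativity established in Lemma~\ref{stab compatible}. By Corollary~\ref{arb point}, the point in question is genuinely of the form $(Z,\scrA_\upalpha)$ for some $\upalpha\in\Hom_\dsG(C_L,C_+)$, where the central charge satisfies $Z(\mathsf{F}_\upalpha[\scrS'_i])\in\mathbb{H}$ for all $i=1,\hdots,n$. The key observation is that, by definition of the $(F_\upalpha)_*$-action on stability conditions in Subsection~\ref{stab gen}, the pair $(Z,\scrA_\upalpha)$ is precisely the image $(F_\upalpha)_*(Z\circ\mathsf{F}_\upalpha,\,\mod\uEnd_R(L))$ of a standard stability condition on $\Db(\uEnd_R(L))$ whose heart is the full module category and whose central charge is $Z\circ\mathsf{F}_\upalpha$.

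First I would identify the source point. Since $\scrA_\upalpha=F_\upalpha(\mod\uEnd_R(L))$ and the simples of this heart are the $F_\upalpha(\scrS'_i)$, the condition $Z(\mathsf{F}_\upalpha[\scrS'_i])\in\mathbb{H}$ says exactly that $Z\circ\mathsf{F}_\upalpha$ sends every simple $[\scrS'_i]$ of $\mod\uEnd_R(L)$ into $\mathbb{H}$, so $\upsigma_L\colonequals(Z\circ\mathsf{F}_\upalpha,\mod\uEnd_R(L))$ is a legitimate point of $\Stab\Db(\uEnd_R(L))$, and $(F_\upalpha)_*(\upsigma_L)=(Z,\scrA_\upalpha)$. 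Next I would chase $\upsigma_L$ down and across the diagram in Lemma~\ref{stab compatible}. Going down the left vertical (the forgetful map) followed by the isomorphism to $\mathbb{C}^n$ sends $\upsigma_L$ to $\sum_{i=1}^n (Z\circ\mathsf{F}_\upalpha)([\scrS'_i])\,\mathbf{e}'_i=\sum_{i=1}^n Z(\mathsf{F}_\upalpha[\scrS'_i])\,\mathbf{e}'_i$, using precisely the identification fixed in \eqref{forgetful} but for $\uEnd_R(L)$ and its ordered simples.

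Finally, commutativity of the diagram forces $p(Z,\scrA_\upalpha)=p\big((F_\upalpha)_*(\upsigma_L)\big)$ to equal the image of $\sum_i Z(\mathsf{F}_\upalpha[\scrS'_i])\,\mathbf{e}'_i$ under the bottom map $\upvarphi_L$, since the right-hand vertical composition is $p$ and the top horizontal is $(F_\upalpha)_*$. This yields
\[
p(Z,\scrA_\upalpha)=\upvarphi_L\!\left(\sum_{i=1}^n Z(\mathsf{F}_\upalpha[\scrS'_i])\,\mathbf{e}'_i\right),
\]
as claimed. The containment in $\upvarphi_L(\mathbb{H}_+)$ is then immediate: each coefficient $Z(\mathsf{F}_\upalpha[\scrS'_i])$ lies in $\mathbb{H}$, so $\sum_i Z(\mathsf{F}_\upalpha[\scrS'_i])\,\mathbf{e}'_i\in\mathbb{H}_+$, and applying $\upvarphi_L$ lands in $\upvarphi_L(\mathbb{H}_+)$. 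I do not anticipate a genuine obstacle here, as this is essentially a diagram chase; the only point requiring care is to make sure the forgetful-map-to-$\mathbb{C}^n$ identification on the $\uEnd_R(L)$ side is set up with respect to the ordered simples $\scrS'_1,\hdots,\scrS'_n$ and the dual basis $\mathbf{e}'_1,\hdots,\mathbf{e}'_n$, exactly as arranged in Subsection~\ref{dual comp subsection}, so that the bottom map really is $\upvarphi_L$ rather than a relabelling thereof.
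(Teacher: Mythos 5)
Your proof is correct and follows essentially the same route as the paper: both identify $(Z,\scrA_\upalpha)$ as $(F_\upalpha)_*\bigl(Z\circ\mathsf{F}_\upalpha,\ \mod\uEnd_R(L)\bigr)$ and then read off $p$ via the commutative diagram of Lemma~\ref{stab compatible}, with the containment in $\upvarphi_L(\mathbb{H}_+)$ coming from Corollary~\ref{arb point}. Your additional check that $Z\circ\mathsf{F}_\upalpha$ genuinely defines a stability condition on $\Db(\uEnd_R(L))$ is a fine (if implicit in the paper) point of care, but the argument is otherwise identical.
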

\begin{proof}
The first statement is Corollary~\ref{arb point}. By definition $\scrA_\upalpha=F_\upalpha(\mod\uEnd_R(L))$, hence 
\begin{align*}
p(Z,\scrA_\upalpha) &= p\bigl( Z, F_\upalpha(\mod\uEnd_R(L))\bigr)
\\
&=
p\circ (F_\upalpha)_*( Z \circ \mathsf{F}_\upalpha, \mod \uEnd_R(L))\tag{since $\mathsf{F}_{\upalpha}\circ \mathsf{F}_{\upalpha}^{-1}=\Id$}\\
&= \upvarphi_L\left( \sum_{i=1}^n Z(\mathsf{F}_\upalpha[\scrS_i'])\,\mathbf{e}'_i\right).\tag{by Lemma~\ref{stab compatible}} 
\end{align*}
The final statement that $p(Z,\scrA_\upalpha)\in\upvarphi_L(\bH_+)$ again follows from Corollary~\ref{arb point}.
\end{proof}

\subsection{A Covering Map}
As in Section \ref{t-structures cont alg section}, let $\Br$ be the image of the homomorphism
\[
\uppi_1(\scrX)\to\Auteq\Db(\Lambda_\con)
\]
sending $\upbeta\mapsto F_\upbeta$. Then $F_{\upbeta}\in\Br$ acts on the space of stability conditions via the action in \eqref{our action}. In this subsection we will  establish that this action is free and properly discontinuous, so that $\Stab\Db(\Lambda_\con) \to \Stab\Db(\Lambda_\con)/\Br$ is a covering map. 

The following is one of our main technical results.  It establishes a condition under which elements of $\Br$, inside the autoequivalence group of $\Db(\Lambda_\con)$, are the identity.  The proof is via Fourier--Mukai techniques.  Forgetting the ambient geometry is thus a bad idea: it seems extremely difficult to establish the following result in a purely algebraic manner. 
\begin{thm}\label{functor is identity}
Suppose that $\upalpha\in\End_{\dsG}(C_+)$ satisfies $F_\upalpha(\Lambda_\con) \cong \Lambda_\con$. Then there is a functorial isomorphism $F_\upalpha \cong \Id$. 
\end{thm}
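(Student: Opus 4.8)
The plan is to prove Theorem~\ref{functor is identity} by transporting the problem to the geometric side, where Fourier--Mukai technology is available, and then using the commutative diagram of Theorem~\ref{intertwinement}. Since $\upalpha\in\End_{\dsG}(C_+)$, the source and target contraction algebras coincide (both are $\Lambda_\con=\uEnd_R(N)$), and $F_\upalpha$ is an autoequivalence of $\Db(\Lambda_\con)$. The hypothesis $F_\upalpha(\Lambda_\con)\cong\Lambda_\con$ together with Proposition~\ref{trivial K}\eqref{trivial K 2} (which gives $\mathsf{F}_\upalpha=\mathrm{Id}$ on K-theory) strongly suggests that $F_\upalpha$ is the identity on simples, but the whole point of the theorem is that being the identity on simples does \emph{not} algebraically force a standard equivalence to be functorially the identity. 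So I must exploit the larger structure.

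First I would use the commutativity of diagram~\eqref{commutes for alpha} for the endo-path $\upalpha$: the functor $F_\upalpha$ on $\Db(\Lambda_\con)$ intertwines, via restriction of scalars $\res\colon\Db(\Lambda_\con)\to\Db(\Lambda)$, with the mutation functor $\Upphi_\upalpha$ on $\Db(\Lambda)=\Db(\End_R(N))$. By Proposition~\ref{terminal ok}(1), $\Upphi_\upalpha$ is functorially isomorphic to $\RHom_\Lambda(\Hom_R(N,N),-)$ type tensor equivalences; more importantly, via the horizontal equivalence $-\otimes^{\bf L}_\Lambda\scrV\colon\Db(\Lambda)\simto\Db(\coh X)$ from Theorem~\ref{intertwinement}, $\Upphi_\upalpha$ corresponds to the composition of Bridgeland--Chen flop functors $\Flop_\upalpha$ on $\Db(\coh X)$. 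Since $\upalpha$ is a closed loop at $C_+$, this $\Flop_\upalpha$ is an autoequivalence of $\Db(\coh X)$. Each Bridgeland--Chen flop functor is a Fourier--Mukai transform (given by a kernel on the relevant fibre product), hence so is their composition $\Flop_\upalpha$.

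The key step is then to show $\Flop_\upalpha\cong\Id$ on $\Db(\coh X)$ as a Fourier--Mukai functor. The standard argument is: a Fourier--Mukai autoequivalence whose kernel restricts correctly over the generic (regular) locus, and which acts trivially on enough objects (e.g.\ structure sheaves of points away from the exceptional fibre, and then on the exceptional data controlled by the K-theoretic triviality $\mathsf{F}_\upalpha=\mathrm{Id}$), must have kernel the structure sheaf of the diagonal. Concretely, I would restrict $\Flop_\upalpha$ to the open locus where $f$ is an isomorphism: there it is forced to be the identity since flop functors are supported on the exceptional locus, so the kernel agrees with $\cO_\Delta$ off a closed subset; then a support/rigidity argument (the kernel is a sheaf, and the two Fourier--Mukai kernels agree generically and have no room to differ on the small exceptional locus once K-theory is trivial) upgrades this to a global isomorphism of kernels. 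This yields $\Flop_\upalpha\cong\Id$, hence $\Upphi_\upalpha\cong\Id$ on $\Db(\Lambda)$.

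Finally I would descend this back along $\res$ to conclude $F_\upalpha\cong\Id$ on $\Db(\Lambda_\con)$. The subtlety is that $\res$ is not an equivalence, so commutativity of~\eqref{commutes for alpha} alone gives $\res\circ F_\upalpha\cong\res$, i.e.\ $F_\upalpha$ and $\Id$ agree after restriction; I would then argue that since $\Lambda_\con=\uEnd_R(N)$ is obtained from $\Lambda=\End_R(N)$ by a quotient, and $F_\upalpha(\Lambda_\con)\cong\Lambda_\con$ already pins down the image of the generator, the natural isomorphism $\Upphi_\upalpha\cong\Id$ can be shown to be compatible with the bimodule quotient defining $\scrT_\upalpha$, so that the induced natural transformation $F_\upalpha\Rightarrow\Id$ is an isomorphism. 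I expect \textbf{the main obstacle} to be precisely this last descent: controlling the standard (tensor) equivalence $F_\upalpha$ from the knowledge of its ambient counterpart $\Upphi_\upalpha$, since restriction of scalars loses information and one must verify the natural isomorphism lifts to the contraction-algebra level rather than merely holding after $\res$. The Fourier--Mukai triviality of $\Flop_\upalpha$ is comparatively routine once the kernel is identified generically.
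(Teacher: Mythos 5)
There is a genuine gap, and it sits precisely where you declare the argument ``comparatively routine'': the claim that $\Flop_\upalpha\cong\Id$ follows from the kernel agreeing with $\cO_\Delta$ away from the exceptional locus together with K-theoretic triviality. Both of those properties hold for \emph{every} loop $\upalpha\in\End_{\dsG}(C_+)$: by Proposition~\ref{trivial K}\eqref{trivial K 2} we have $\mathsf{F}_\upalpha=\Id$ always, and any composition of flop functors along a loop acts as the identity off the flopped curves. Yet for the loop $\upalpha=s_i\circ s_i$ crossing a single wall and back, $F_\upalpha$ and $\Flop_i\circ\Flop_i$ are the (inverse) ``flop-flop'' twists, which are well known to be nontrivial autoequivalences \cite{DW1} --- indeed if all such loops acted trivially, the Galois group $\Br$ in Theorem~\ref{stab main intro} and the faithfulness statements would collapse. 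So no argument from generic agreement of kernels plus triviality on $\mathsf{G}_0(\Lambda_\con)$ can yield $\Flop_\upalpha\cong\Id$; your proposal never uses the hypothesis $F_\upalpha(\Lambda_\con)\cong\Lambda_\con$ at this stage, and without it the intermediate conclusion is simply false. The paper uses the hypothesis \emph{first}: by the Rouquier--Zimmermann lifting argument, $F_\upalpha(\Lambda_\con)\cong\Lambda_\con$ forces the two-sided complex inducing $F_\upalpha$ to be ${}_{1}(\Lambda_\con)_{\upzeta}$ for an algebra automorphism $\upzeta$, so $F_\upalpha$ is a Morita equivalence, takes simples to simples, and (only now invoking K-theoretic triviality) fixes each simple. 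Then the geometry enters: across the diagram, $\Upphi_\upalpha$ fixes $\scrS_1,\hdots,\scrS_n$, hence by \cite[3.5.8]{VdB} $\Flop_\upalpha$ fixes each $\scrO_{\mathrm{C}_i}(-1)$, sends $\scrO_X$ to $\scrO_X$ and commutes with pushdown, and the Toda-style Fourier--Mukai argument of \cite[4.3]{HomMMP} gives $\Flop_\upalpha\cong h_*$ for an isomorphism $h$ commuting with the contraction, which is the identity on a dense open set and hence the identity.

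Your descent step is also incomplete, though you correctly flag it as delicate: as you say, commutativity only gives $\res\circ F_\upalpha\cong\res$, and your proposed fix (``compatibility with the bimodule quotient'') is not an argument. The paper's resolution again flows from the RZ reduction. Restricting the commutative diagram to hearts, $\res$ is an \emph{equivalence} from $\mod\Lambda_\con$ onto $\mod_{\{1,\hdots,n\}}\Lambda$, the $\Lambda$-modules filtered by $\scrS_1,\hdots,\scrS_n$, so $\Upphi_\upalpha\cong\Id$ forces $F_\upalpha$ restricted to $\mod\Lambda_\con$ to be isomorphic to the identity functor; since $F_\upalpha$ is twisting by the algebra automorphism $\upzeta$, this says exactly that $\upzeta$ is inner (see \cite[2.8.16]{linckelmann}), whence ${}_{1}(\Lambda_\con)_{\upzeta}\cong\Lambda_\con$ as bimodules and $F_\upalpha\cong\Id$ on all of $\Db(\Lambda_\con)$. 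In short, the missing idea in your proposal is the reduction of $F_\upalpha$ to an algebra automorphism at the outset: that single step is what makes the hypothesis bite, what powers the geometric identification of $\Flop_\upalpha$, and what converts ``identity after restriction'' into ``identity'' at the end.
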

\begin{proof}
By the assumption, the standard equivalence $F_\upalpha$ is induced by the one-sided tilting complex $\Lambda_\con$. By the usual lifting argument (see e.g.\ \cite[2.3]{RZ}), the bimodule complex defining $F_\upalpha$ must be isomorphic to ${}_{1}({\Lambda_\con})_{\upzeta}$ as bimodules, for some algebra automorphism $\upzeta\colon\Lambda_\con\to\Lambda_\con$.  Hence $F_\upalpha$ is induced by this algebra automorphism.

Since $F_\upalpha$ induces a Morita equivalence, it must take  simples to simples.  Furthermore, as $F_\upalpha$ is the  identity on K-theory $\mathsf{G}_0(\Lambda_\con)$ by Proposition~\ref{trivial K}, $F_\upalpha$ must fix all simples. 

Now consider the commutative diagram
\[
\begin{tikzpicture}
\node (Acon) at (0,0) {$\Db(\Lambda_\con)$};
\node (A) at (4,0) {$\Db(\Lambda)$};
\node (X) at (8,0) {$\Db(\coh X)$};
\node (Bcon) at (0,-2) {$\Db(\Lambda_\con)$};
\node (B) at (4,-2) {$\Db(\Lambda)$};
\node (Y) at (8,-2) {$\Db(\coh X)$};
\draw[->] (Acon) --
node[above] { $\scriptstyle \res$} (A);
\draw[->] (A) --
node[above] { $\scriptstyle - \otimes^{\bf L}_{\Lambda}\scrV$} node [below] {$\scriptstyle\sim$} (X);
\draw[->] (Bcon) --
 node[above] { $\scriptstyle \res$} (B);
\draw[->] (B) --
 node[above] { $\scriptstyle - \otimes^{\bf L}_{\Lambda} \scrV$} node [below] {$\scriptstyle\sim$} (Y);
 
\draw[->] (Acon) --node [left] {$\scriptstyle F_{\upalpha} $} node [right] {$\scriptstyle\sim$} (Bcon);
\draw[->] (A) --node [left] {$\scriptstyle \Upphi_{\upalpha}$} node [right] {$\scriptstyle\sim$} (B);
\draw[->] (X) -- node[left] {$ \scriptstyle  \Flop_\upalpha$} node [right] {$\scriptstyle\sim$} (Y);
\end{tikzpicture}
\]
where the left hand side is \eqref{commutes for alpha}, and the right hand side can be obtained by iterating the right hand side of Theorem~\ref{intertwinement}.   Since $F_\upalpha$ fixes simples, the left hand commutative diagram  implies that $\Upphi_\upalpha$ fixes the simples $\scrS_1,\hdots,\scrS_n$.  By \cite[3.5.8]{VdB}, across the right hand commutative diagram, this in turn implies that $\Flop_\upalpha$ fixes the sheaves $\scrO_{\mathrm{C}_1}(-1),\hdots,\scrO_{\mathrm{C}_n}(-1)$, where each $\mathrm{C}_i\cong\mathbb{P}^1$.  
Since the flop functor and its inverse both map $\scrO_X$ to $\scrO_X$ \cite[(4.4)]{Bridgeland} and commute with the pushdown to $\Spec R$ (see e.g.\ \cite[7.16]{DW1}), by the standard Fourier--Mukai argument (see \cite[4.3]{HomMMP}, which itself is based on \cite{T08}), $\Flop_\upalpha\cong h_*$ for some isomorphism $h \colon X\to X$ that commutes with the pushdown.  But this isomorphism is the identity on the dense open set obtained by removing the exceptional locus, and hence it must be the identity.

It follows that $\Flop_\upalpha\cong\Id$ and hence $\Upphi_\upalpha\cong\Id$.  Restricting the left hand commutative diagram to $\mod \Lambda_\con$, we obtain a commutative diagram
\[
\begin{tikzpicture}
\node (A) at (0,0) {$\mod\Lambda_\con$};
\node (X) at (4,0) {$\mod_{\{1,\hdots,n\}}\Lambda$};
\node (B) at (0,-2) {$\mod\Lambda_\con$};
\node (Y) at (4,-2) {$\mod_{\{1,\hdots,n\}}\Lambda$};
\draw[->] (A) --
node[above] { $\scriptstyle \res$} node [below] {$\scriptstyle\sim$} (X);
\draw[->] (B) --
 node[above] { $\scriptstyle \res$} node [below] {$\scriptstyle\sim$} (Y);
\draw[->] (A) --node [left] {$\scriptstyle F_\upalpha$} (B);
\draw[->] (X) -- node[right] {$ \scriptstyle \Upphi_\upalpha\cong\Id$} (Y);
\end{tikzpicture}
\]
where $\mod_{\{1,\hdots,n\}}\Lambda$ denotes those $\Lambda$-modules with a finite filtration by the simples $\scrS_1,\hdots,\scrS_n$.  In particular, $F_\upalpha$ restricted to $\mod\Lambda_\con$ is functorially isomorphic to the identity.  Hence $\upzeta$ is an inner automorphism (see e.g.\ \cite[2.8.16]{linckelmann}), which in turn implies that $F_\upalpha\cong\Id$.
\end{proof}

\begin{cor}\label{free and prop discont}
For each $x\in \Stab\Db(\Lambda_\con)$ there is an open neighbourhood $\scrU$ of $x$ such that $\scrU\cap F_{\upbeta}(\scrU)=\emptyset$ for all $1\neq F_\upbeta\in \Br$.
\end{cor}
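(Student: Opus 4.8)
The plan is to deduce the statement from two facts: that $p$ is a local homeomorphism, and that $p$ is constant on $\Br$-orbits, so that the whole problem collapses to proving that the $\Br$-action is \emph{free}. First I would record that $p\circ F_\upbeta=p$ for every $F_\upbeta\in\Br$. Indeed, by \eqref{our action} the functor $F_\upbeta$ sends $(Z,\scrA_\upalpha)$ to $(Z,\scrA_{\upbeta\circ\upalpha})$ with $\upbeta\circ\upalpha\in\Hom_\dsG(C_L,C_+)$; since $\upbeta\in\End_\dsG(C_+)$, Proposition~\ref{trivial K}\eqref{trivial K 2} gives $\mathsf{F}_\upbeta=\Id$, hence $\mathsf{F}_{\upbeta\circ\upalpha}=\mathsf{F}_\upbeta\circ\mathsf{F}_\upalpha=\mathsf{F}_\upalpha$, and the formula of Corollary~\ref{effect of p} (which depends only on $L$ and on $\mathsf{F}_\upalpha$) shows $p(Z,\scrA_{\upbeta\circ\upalpha})=p(Z,\scrA_\upalpha)$. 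Next, using Theorem~\ref{stab thm}, I would choose an open neighbourhood $\scrU$ of $x$ on which $p$ is injective. If some $1\neq F_\upbeta\in\Br$ satisfied $\scrU\cap F_\upbeta(\scrU)\neq\emptyset$, then picking $y\in\scrU$ with $y=F_\upbeta(y')$ for some $y'\in\scrU$ gives $p(y)=p(F_\upbeta(y'))=p(y')$ by $\Br$-invariance, and injectivity of $p|_\scrU$ forces $y=y'$, so that $F_\upbeta$ fixes $y$. Thus the corollary follows once I show the action is free.

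For freeness, write the fixed point as $y=(Z,\scrA_\upalpha)$ with $\upalpha\in\Hom_\dsG(C_L,C_+)$. As the central charge is literally unchanged in \eqref{our action}, the equality $F_\upbeta\cdot y=y$ is equivalent to $F_\upbeta(\scrA_\upalpha)=\scrA_\upalpha$, i.e.\ $F_\upbeta$ preserves the heart $\scrA_\upalpha$. To exploit Theorem~\ref{functor is identity}, which is phrased at the base chamber, I would conjugate: setting $\upgamma\colonequals\upalpha^{-1}\circ\upbeta\circ\upalpha\in\End_\dsG(C_L)$, functoriality of $\upalpha\mapsto F_\upalpha$ gives $F_\upgamma=F_\upalpha^{-1}\circ F_\upbeta\circ F_\upalpha$, and since $\scrA_\upalpha=F_\upalpha(\mod\uEnd_R(L))$ and $F_\upbeta(\scrA_\upalpha)=\scrA_\upalpha$, this functor preserves $\mod\uEnd_R(L)$. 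Restricting to this heart, $F_\upgamma$ is an exact autoequivalence of $\mod\uEnd_R(L)$, hence permutes its simples $\scrS_1',\hdots,\scrS_n'$; but $\mathsf{F}_\upgamma=\Id$ by Proposition~\ref{trivial K}\eqref{trivial K 2} (applied at $C_L$), and since the simple classes form a basis of $\mathsf{G}_0(\uEnd_R(L))$, the permutation must be trivial, so $F_\upgamma$ fixes every simple.

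At this point the key step is to upgrade ``fixes every simple of the heart'' to the hypothesis of Theorem~\ref{functor is identity}, namely that the regular module is preserved. Because an exact autoequivalence carries projective covers to projective covers, fixing the simple $\scrS_i'$ forces $F_\upgamma$ to fix its projective cover; as $\uEnd_R(L)$ is basic, summing over $i$ yields $F_\upgamma(\uEnd_R(L))\cong\uEnd_R(L)$. The version of Theorem~\ref{functor is identity} with $L$ in place of $N$ and $C_L$ in place of $C_+$ --- valid since the entire construction is symmetric under replacing the distinguished generator by any $L\in\Mut_0(N)=\Mut_0(L)$ --- then gives $F_\upgamma\cong\Id$, whence $F_\upbeta=F_\upalpha\circ F_\upgamma\circ F_\upalpha^{-1}\cong\Id$, contradicting $1\neq F_\upbeta$. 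I expect this last paragraph to be the main obstacle: everything before it is formal covering-space bookkeeping, whereas genuinely deducing that the \emph{regular module} is fixed (rather than merely the simples) and transporting Theorem~\ref{functor is identity}, with its essential Fourier--Mukai input, to the chamber $C_L$ is where the real content lies.
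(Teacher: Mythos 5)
Your proposal is correct, and its overall skeleton (extract a fixed point, then prove the action is free by conjugating to $C_L$ and invoking Theorem~\ref{functor is identity}) matches the paper's proof, but you implement two sub-steps differently. First, to get the fixed point the paper does not use local injectivity of $p$: it takes $\scrU$ to be the ball of radius $1/4$ in Bridgeland's metric, notes that $\scrU\cap F_\upbeta(\scrU)\neq\emptyset$ forces $d(y,(F_\upbeta)_*y)<1$ for all $y\in\scrU$, and then applies \cite[Lemma 6.4]{B07} (equal central charges plus distance $<1$ implies equality) to conclude that \emph{every} point of $\scrU$ is fixed. Your route via Theorem~\ref{stab thm} --- choosing $\scrU$ on which the forgetful map is injective, so that $\Br$-invariance of $p$ forces a fixed point --- is equally valid and arguably more economical, since it avoids the metric altogether and a single fixed point suffices for the remainder. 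Second, having shown $F_\upgamma\colonequals F_{\upalpha^{-1}\upbeta\upalpha}$ restricts to an exact autoequivalence of $\mod\Gamma_\con$ where $\Gamma_\con=\uEnd_R(L)$, the paper upgrades this to $F_\upgamma(\Gamma_\con)\cong\Gamma_\con$ by a different one-liner: $F_\upgamma(\Gamma_\con)$ is a basic tilting module, and since $\Gamma_\con$ is symmetric the only such module is $\Gamma_\con$ itself. Your argument --- simples are permuted, Proposition~\ref{trivial K}\eqref{trivial K 2} at $C_L$ kills the permutation because the simple classes base $\mathsf{G}_0(\Gamma_\con)$, and then uniqueness of projective covers fixes each indecomposable projective --- is also sound, trading the self-injectivity fact for the K-theoretic one. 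Finally, the ``main obstacle'' you flag at the end is not actually an obstacle: the paper applies Theorem~\ref{functor is identity} to the contraction algebra $\Gamma_\con$ without further argument, exactly as you propose, since $L$ is itself a basic rigid generator corresponding to a flopping contraction over $R$ and the entire setup is symmetric in the mutation class; your instinct that this transport is legitimate is precisely what the paper relies on.
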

\begin{proof}
Consider the open neighbourhood $\scrU$ of $x$ defined by 
\[
\scrU\colonequals\{y\in\Stab{}\Db(\Lambda_\con) \mid d(x,y)<1/4\},
\] 
where $d(-,-)$ is the metric on stability conditions introduced in \cite[Section 6]{B07}.  Suppose that $\scrU\cap F_\upbeta(\scrU)\neq\emptyset$ for some $F_\upbeta\in\Br$.  We will show that $F_\upbeta\cong\Id$. 

As the two open balls of radius $\frac{1}{4}$ intersect, every $y\in  \scrU$ must satisfy  $d\bigl(y,(F_\upbeta)_*y\bigr)<1$.  Furthermore, the central charges of $y$ and $(F_\upbeta)_*y$ are equal by Proposition \ref{trivial K}\eqref{trivial K 2} and the top commutative diagram in Lemma~\ref{stab compatible}. 
Using \cite[Lemma 6.4]{B07} it follows immediately that  $y=(F_\upbeta)_*y$, for every $y\in \scrU$.

In particular, by Corollary~\ref{arb point}, say $x=(Z,\scrA_\upalpha)$ for some $\upalpha\in\Hom_{\dsG}(C_L,C_+)$.  Since $x\in \scrU$,  the property $(F_\upbeta)_*(x)=x$ implies that $\scrA_{\upbeta\circ\upalpha}=\scrA_\upalpha$, and so $F_\upbeta$ restricts to an equivalence $\scrA_\upalpha\to\scrA_{\upalpha}$.  In turn, this implies that the composition
\[
F_{\upalpha^{-1}\upbeta\upalpha}=F_\upalpha^{-1}\circ F_\upbeta\circ F_\upalpha\colon\Db(\Gamma_\con)\to\Db(\Gamma_\con),
\] 
where $\Gamma_\con\colonequals \uEnd_R(L)$, restricts to an equivalence $\mod\Gamma_\con\to\mod\Gamma_\con$.  It follows that $F_{\upalpha^{-1}\upbeta\upalpha}(\Gamma_\con)$ must then be a basic tilting module, given that $\Gamma_\con$ is.   Since $\Gamma_\con$ is symmetric, the only such module is $\Gamma_\con$, and so $F_{\upalpha^{-1}\upbeta\upalpha}(\Gamma_\con)\cong\Gamma_\con$. By Proposition~\ref{functor is identity} applied to the contraction algebra $\Gamma_\con$, we conclude that $F_{\upalpha^{-1}\upbeta\upalpha}\cong\Id$, and hence $F_\upbeta\cong\Id$.
\end{proof}

\begin{cor}\label{covering map}
The map $\Stab\Db(\Lambda_\con) \to \Stab\Db(\Lambda_\con)/\mathsf{PBr}$ is the universal covering map, with Galois group $\mathsf{PBr}$. 
\end{cor}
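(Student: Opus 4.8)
The plan is to obtain this statement as a direct application of covering space theory, since all the genuine work has already been carried out in Corollary~\ref{free and prop discont}. That corollary asserts that every point $x\in\Stab\Db(\Lambda_\con)$ has an open neighbourhood $\scrU$ with $\scrU\cap F_\upbeta(\scrU)=\emptyset$ for every $1\neq F_\upbeta\in\Br$. This is precisely the condition for the $\Br$-action (described explicitly in \eqref{our action}) to be a \emph{covering space action}, equivalently a free and properly discontinuous action in the sense required by covering space theory.

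First I would note that freeness is immediate from Corollary~\ref{free and prop discont}: if $F_\upbeta\cdot x=x$ for some $1\neq F_\upbeta\in\Br$, then $x\in\scrU\cap F_\upbeta(\scrU)$, a contradiction. Next I would record the topological hypotheses on the total space. Since $\Lambda_\con$ is silting-discrete \cite[3.10]{August1}, the space $\Stab\Db(\Lambda_\con)$ is contractible \cite{PSZ}; in particular it is path-connected and simply connected, while Theorem~\ref{stab thm} endows it with the structure of a (locally path-connected, Hausdorff) complex manifold.

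With these hypotheses in hand, the standard theorem on covering space actions applies: a free, properly discontinuous action of a group $G$ on a connected, locally path-connected space $Y$ makes the quotient map $Y\to Y/G$ into a regular covering map whose group of deck transformations is exactly $G$. Applying this with $Y=\Stab\Db(\Lambda_\con)$ and $G=\Br$ shows that $\Stab\Db(\Lambda_\con)\to\Stab\Db(\Lambda_\con)/\Br$ is a regular cover with Galois group $\Br=\mathsf{PBr}$. Finally, because $\Stab\Db(\Lambda_\con)$ is simply connected, this regular cover is in fact the universal cover, which is the assertion.

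I do not expect any real obstacle at this stage: the difficulty has been front-loaded into Corollary~\ref{free and prop discont}, and before that into Theorem~\ref{functor is identity}. The only points warranting a sentence of care are verifying the topological hypotheses of the covering space action theorem --- connectedness and simple connectedness, both supplied by contractibility, together with local path-connectedness, supplied by the manifold structure --- and being explicit that the $\Br$-action is by deck transformations, which is clear since each $F_\upbeta$ permutes the fibres of the quotient map by construction.
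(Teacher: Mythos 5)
Your proposal is correct and is essentially identical to the paper's own proof: both deduce from Corollary~\ref{free and prop discont} that the $\Br$-action is a covering space action, invoke the standard result (the paper cites \cite[1.40(a)(b)]{Hat}) to get a regular cover with Galois group $\Br$, and use contractibility via silting-discreteness \cite{PSZ} to conclude simple connectedness and hence universality. Your extra care about local path-connectedness and freeness is implicit in the paper's appeal to Hatcher but changes nothing of substance.
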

\begin{proof}
$\Stab\Db(\Lambda_\con)$ is contractible since contraction algebras are silting-discrete \cite[4.12]{August1}, and silting-discrete algebras have contractible stability manifolds \cite{PSZ}. In particular, $\Stab\Db(\Lambda_\con)$ is path connected and so the given map is a regular covering map by Corollary~\ref{free and prop discont} together with the standard \cite[1.40(a)(b)]{Hat}. The covering is clearly universal, since $\Stab\Db(\Lambda_\con)$ is contractible and hence simply connected. 
\end{proof}

\subsection{The Regular Cover to the Complexified Complement}
In this subsection we will establish that  $p$ induces an isomorphism 
\begin{equation}
\Stab\Db(\Lambda_\con)/\Br \xrightarrow{\sim} \mathbb{C}^n \backslash \scrH_{\mathbb{C}}.\label{want iso}
\end{equation}
Combining with Corollary~\ref{covering map} will then prove that \eqref{forgetful} is the universal covering map onto its image, with Galois group $\Br$.  We will establish \eqref{want iso} in two steps: first by showing that $p$ has image $\mathbb{C}^n\backslash \scrH_{\mathbb{C}}$, then second by establishing that \eqref{want iso} is well-defined and injective.

Our proof makes use of the following key combinatorial result, which is folklore when $\scrH$ is an ADE root system.  In our mildly more general setting here, the proof, which is basically the same, can be found in \cite[Appendix A]{HW2}.
\begin{prop}\label{chamber decomposition} 
With notation as above, the following hold. \begin{enumerate}
\item\label{chamber decomposition 1}   If $\upalpha$ and $\upbeta$ terminate at $C_+$, then 
\[
\upvarphi_{s(\upalpha)}(\mathbb{H}_+)\cap\upvarphi_{s(\upbeta)}(\mathbb{H}_+) \neq \emptyset \iff s(\upalpha)=s(\upbeta).
\]
\item\label{chamber decomposition 2}   There is a disjoint union
\[
\mathbb{C}^n \backslash \scrH_{\mathbb{C}} = \bigsqcup_{L \in \Mut_0(N)} \upvarphi_L( \mathbb{H}_+).
\]
\end{enumerate}
\end{prop}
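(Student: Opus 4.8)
The plan is to reduce everything to one explicit description of each set $\upvarphi_L(\mathbb{H}_+)$ in terms of the facet hyperplanes of the chamber $C_L$, and then to treat existence, disjointness, and the containment $\upvarphi_L(\mathbb{H}_+)\subseteq\mathbb{C}^n\backslash\scrH_{\mathbb{C}}$ separately. First I would use that $\scrH$ is simplicial: each chamber $C_L$ is an open simplicial cone, concretely the image under $\upvarphi_L$ of the positive orthant $\mathbb{R}_{>0}^n=\{\sum_j\upvartheta_j\mathbf{e}'_j\mid\upvartheta_j>0\}$, with extreme rays $\upvarphi_L(\mathbf{e}'_1),\dots,\upvarphi_L(\mathbf{e}'_n)$. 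Hence the dual basis $\ell^L_1,\dots,\ell^L_n$ consists of the $n$ facet forms of $C_L$, normalised to be positive on $C_L$, each vanishing on an arrangement hyperplane. Since $\upvarphi_L$ is real-linear it commutes with taking real and imaginary parts, so writing $z=x+iy$ one obtains the characterisation
\[
z\in\upvarphi_L(\mathbb{H}_+)\iff \ell^L_j(z)\in\mathbb{H}\ \text{for all }j\iff \text{for each }j,\ \ell^L_j(y)>0,\ \text{or}\ \ell^L_j(y)=0\text{ and }\ell^L_j(x)<0,
\]
using that $w\in\mathbb{H}$ exactly when $\operatorname{Im}w>0$, or $\operatorname{Im}w=0$ and $\operatorname{Re}w<0$. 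Every assertion below is read off this characterisation.

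For the containment $\upvarphi_L(\mathbb{H}_+)\subseteq\mathbb{C}^n\backslash\scrH_{\mathbb{C}}$, I would invoke convex duality for the simplicial cone $C_L$. Any arrangement form $\ell$ has constant sign on $C_L$, so after replacing $\ell$ by $-\ell$ if necessary it is nonnegative on $C_L$ and therefore lies in the dual cone $\operatorname{cone}(\ell^L_1,\dots,\ell^L_n)$, i.e.\ $\ell=\sum_j c_j\ell^L_j$ with all $c_j\geq 0$ and not all zero. Then $\ell(z)=\sum_j c_j\,\ell^L_j(z)$ is a nonnegative combination of elements of $\mathbb{H}$: its imaginary part $\sum_j c_j\operatorname{Im}\ell^L_j(z)$ is $\geq 0$, and if it vanishes then every contributing $\ell^L_j(z)$ is a negative real, forcing $\ell(z)<0$. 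Either way $\ell(z)\neq 0$, so $z$ avoids the complexified hyperplane $\ker\ell$. This is the step where the semi-closed shape of $\mathbb{H}$ (closed along the negative reals, open along the positive reals) is genuinely used, and together with the reliance on simpliciality it is the part I expect to be the real crux.

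The remaining two points I would settle simultaneously by a single perturbation. Given $z=x+iy\in\mathbb{C}^n\backslash\scrH_{\mathbb{C}}$, set $\tilde y_s\colonequals y-sx$ for small real $s>0$. The hypothesis $z\notin\scrH_{\mathbb{C}}$ says precisely that no arrangement form kills both $x$ and $y$; hence for every arrangement form $\ell$ either $\ell(y)\neq 0$ (so $\ell(\tilde y_s)\neq 0$ for small $s$) or $\ell(y)=0$ and $\ell(x)\neq 0$ (so $\ell(\tilde y_s)=-s\ell(x)\neq 0$). Thus for all sufficiently small $s>0$ the real point $\tilde y_s$ lies in $\mathbb{R}^n\backslash\scrH$, hence in a unique chamber $C_L$; unwinding $\ell^L_j(\tilde y_s)>0$ against the characterisation shows $\ell^L_j(y)\geq 0$, with $\ell^L_j(x)<0$ whenever $\ell^L_j(y)=0$, so $z\in\upvarphi_L(\mathbb{H}_+)$. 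This gives the covering half of statement~(2). For the disjointness underlying statement~(1), suppose $z\in\upvarphi_L(\mathbb{H}_+)\cap\upvarphi_M(\mathbb{H}_+)$; running the same perturbation, the characterisation forces $\ell^L_j(\tilde y_s)>0$ and $\ell^M_j(\tilde y_s)>0$ for all $j$ and all small $s>0$, so $\tilde y_s\in C_L\cap C_M$, whence $L=M$ as distinct chambers are disjoint. Since chambers correspond bijectively to $\Mut_0(N)$ and $\upvarphi_{s(\upalpha)}$ depends only on the chamber $s(\upalpha)$, this is exactly statement~(1), and combined with the covering statement and the containment it yields the disjoint union of statement~(2).
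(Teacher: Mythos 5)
Your proposal is correct, and it is essentially the same argument the paper relies on: the paper gives no internal proof of Proposition~\ref{chamber decomposition} but defers to \cite[Appendix A]{HW2}, where the standard folklore strategy you reconstruct is carried out --- describing $\upvarphi_L(\mathbb{H}_+)$ via the dual basis forms $\ell^L_j$, which by simpliciality are the facet forms of the chamber $C_L$, using cone duality to expand an arbitrary arrangement form as a nonnegative combination $\sum_j c_j\ell^L_j$ so that each piece avoids $\scrH_{\mathbb{C}}$ (this is where the semi-closed shape of $\mathbb{H}$, a convex cone not containing $0$, enters, exactly as you identify), and locating and separating the pieces via the perturbation $y-sx$ of the imaginary part. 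The background facts you assume without proof, namely that $C_L=\upvarphi_L(\mathbb{R}^n_{>0})$, that $\scrH$ is simplicial, and that $L\mapsto C_L$ is a bijection onto the chambers, are precisely the setup the paper itself imports from \cite{HW} and \cite{IW9}, so nothing is missing.
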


The above combinatorics lead directly to the following.  In the special case when $\scrH$ is an ADE root system, the following result is already implicit in \cite{B3} and \cite{T08}. 
\begin{cor} \label{image of p}
The image of $p$ is $\mathbb{C}^n \backslash \scrH_{\mathbb{C}}$.
\end{cor}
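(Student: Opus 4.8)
The plan is to prove the equality of sets by a double inclusion, drawing the containment direction from Corollary~\ref{effect of p} and the surjectivity direction from the chamber decomposition in Proposition~\ref{chamber decomposition}\eqref{chamber decomposition 2}.

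For $\operatorname{Im}(p) \subseteq \mathbb{C}^n \backslash \scrH_{\mathbb{C}}$ there is essentially nothing to do. Every point of $\Stab\Db(\Lambda_\con)$ is of the form $(Z,\scrA_\upalpha)$ for some $\upalpha \in \Hom_\dsG(C_L, C_+)$ by Corollary~\ref{arb point}, and Corollary~\ref{effect of p} places $p(Z,\scrA_\upalpha)$ inside $\upvarphi_L(\mathbb{H}_+)$. Since $\upvarphi_L(\mathbb{H}_+)$ is one of the pieces appearing in the disjoint union of Proposition~\ref{chamber decomposition}\eqref{chamber decomposition 2}, it is contained in $\mathbb{C}^n \backslash \scrH_{\mathbb{C}}$.

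The substance is the reverse inclusion. Given $w \in \mathbb{C}^n \backslash \scrH_{\mathbb{C}}$, Proposition~\ref{chamber decomposition}\eqref{chamber decomposition 2} locates $w$ in a unique $\upvarphi_L(\mathbb{H}_+)$, so that $w = \upvarphi_L\bigl(\sum_{i=1}^n a_i \mathbf{e}'_i\bigr)$ with every $a_i \in \mathbb{H}$. I would then manufacture a stability condition mapping to $w$ as follows. The Deligne groupoid is connected, so fix any $\upalpha \in \Hom_\dsG(C_L, C_+)$. The simples of the heart $\scrA_\upalpha = F_\upalpha(\mod \uEnd_R(L))$ have classes $\mathsf{F}_\upalpha[\scrS'_1], \hdots, \mathsf{F}_\upalpha[\scrS'_n]$, which form a $\mathbb{Z}$-basis of $\mathsf{G}_0(\Lambda_\con)$, and I define $Z \colon \mathsf{G}_0(\Lambda_\con) \to \mathbb{C}$ by $Z(\mathsf{F}_\upalpha[\scrS'_i]) = a_i$ together with linear extension. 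Feeding $(Z, \scrA_\upalpha)$ into Corollary~\ref{effect of p} then gives $p(Z, \scrA_\upalpha) = \upvarphi_L\bigl(\sum_i a_i \mathbf{e}'_i\bigr) = w$, as required.

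The single point that must be checked — and the step I expect to be the main obstacle to phrase cleanly rather than to prove — is that $(Z, \scrA_\upalpha)$ really is a stability condition. This is where finiteness pays off: $\scrA_\upalpha$ is a finite length heart with finitely many simples, being equivalent to $\mod \uEnd_R(L)$, so assigning values in $\mathbb{H}$ to the simples automatically produces the Harder--Narasimhan filtrations and hence a genuine stability condition. This is exactly the criterion already recorded in Corollary~\ref{arb point}, and it is ultimately a consequence of the silting-discreteness of $\Lambda_\con$ underpinning Proposition~\ref{hearts known}. Combining the two inclusions then yields the claim.
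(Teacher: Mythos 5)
Your proof is correct and is essentially the paper's argument: both directions use Proposition~\ref{chamber decomposition}\eqref{chamber decomposition 2} together with Corollary~\ref{effect of p}, and the finite-length heart furnishes the required stability condition in each case. The only cosmetic difference is that the paper constructs the stability condition on $\Db(\uEnd_R(L))$ with its standard heart and pushes it forward by $(F_\upalpha)_*$, whereas you define the central charge directly on $\mathsf{G}_0(\Lambda_\con)$ using the basis $\mathsf{F}_\upalpha[\scrS'_1],\hdots,\mathsf{F}_\upalpha[\scrS'_n]$ --- the two constructions agree via the commutative diagram of Lemma~\ref{stab compatible}.
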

\begin{proof}
By Proposition~\ref{chamber decomposition}\eqref{chamber decomposition 2}, all the sets $ \upvarphi_L( \mathbb{H}_+)$ avoid the complexified hyperplanes, so by Corollary~\ref{effect of p}  the image of $p$ lies in $\mathbb{C}^n \backslash \scrH_{\mathbb{C}}$. Further, given any $z \in \mathbb{C}^n \backslash \scrH_{\mathbb{C}}$, we may write $z=\upvarphi_L(h)$ for some $L \in \Mut_0(N)$ and some $h\in\bH_+$, again by Proposition~\ref{chamber decomposition}\eqref{chamber decomposition 2}.  Since the standard heart in $\Db(\uEnd_R(L))$ has finite length, we can find a stability condition $\upsigma\in\Stab \Db(\uEnd_R(L) )$ which maps to $h$ via the left hand vertical composition in Lemma~\ref{stab compatible}. Then for any $\upalpha \in \Hom_\dsG( C_L, C_+)$, the commutative diagram in Lemma~\ref{stab compatible} shows that $(F_\upalpha)_*(\upsigma)\in\Stab \Db(\Lambda_\con )$ maps, under $p$, to $z$.
\end{proof}

The following shows that \eqref{want iso} is both well-defined, and injective.

\begin{lem}\label{disjoint pieces}
For $\upsigma_1,\upsigma_2\in\Stab\Db(\Lambda_\con)$, then
\[
p(\upsigma_1)=p(\upsigma_2)\iff
\upsigma_1=(F_\upgamma)_*\upsigma_2\mbox{ for some } F_\upgamma\in\Br.
\]
\end{lem}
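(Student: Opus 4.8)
The plan is to combine the explicit formula for $p$ from Corollary~\ref{effect of p} with the chamber combinatorics of Proposition~\ref{chamber decomposition} and the path-independence of K-theory from Proposition~\ref{trivial K}. By Corollary~\ref{arb point} I would first choose representatives $\upsigma_1=(Z_1,\scrA_{\upalpha_1})$ and $\upsigma_2=(Z_2,\scrA_{\upalpha_2})$ with $\upalpha_j\in\Hom_\dsG(C_{L_j},C_+)$. The key observation for the reverse implication is that $p(Z,\scrA_\upalpha)=(Z[\scrS_1],\hdots,Z[\scrS_n])$ depends only on the central charge $Z$ evaluated on the \emph{fixed} simples of $\Lambda_\con$, and not on the heart. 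So if $\upsigma_1=(F_\upgamma)_*\upsigma_2$ with $F_\upgamma\in\Br$, then writing $\upsigma_2=(Z,\scrA_\upalpha)$ the action \eqref{our action} gives $\upsigma_1=(Z,\scrA_{\upgamma\circ\upalpha})$, which has the \emph{same} central charge $Z$ and therefore the same image under $p$. This disposes of $(\Leftarrow)$ in essentially one line.

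For the forward implication, suppose $p(\upsigma_1)=p(\upsigma_2)$. First I would pin down the source chambers: Corollary~\ref{effect of p} gives $p(\upsigma_j)\in\upvarphi_{L_j}(\mathbb{H}_+)$, so the common value lies in $\upvarphi_{L_1}(\mathbb{H}_+)\cap\upvarphi_{L_2}(\mathbb{H}_+)$, and Proposition~\ref{chamber decomposition}\eqref{chamber decomposition 1} forces $L_1=L_2=:L$. Since $\upvarphi_L$ is an isomorphism, the equality of the explicit vectors $h_j=\sum_i Z_j(\mathsf{F}_{\upalpha_j}[\scrS'_i])\mathbf{e}'_i$ supplied by Corollary~\ref{effect of p} then yields $h_1=h_2$. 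Now both $\upalpha_1,\upalpha_2\in\Hom_\dsG(C_L,C_+)$, so Proposition~\ref{trivial K}\eqref{trivial K 3} gives $\mathsf{F}_{\upalpha_1}=\mathsf{F}_{\upalpha_2}=:\mathsf{F}$, and comparing coefficients shows $Z_1(\mathsf{F}[\scrS'_i])=Z_2(\mathsf{F}[\scrS'_i])$ for all $i$. Because $\mathsf{F}$ is an isomorphism, $\{\mathsf{F}[\scrS'_i]\}_{i=1}^n$ is a basis of $\mathsf{G}_0(\Lambda_\con)$, whence $Z_1=Z_2=:Z$.

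Having forced both the source chamber and the central charge to agree, the last step is to manufacture the braid: I would set $\upgamma\colonequals\upalpha_1\circ\upalpha_2^{-1}\in\End_\dsG(C_+)$, so that $F_\upgamma\in\Br$, and compute via \eqref{our action} that $(F_\upgamma)_*\upsigma_2=(Z,\scrA_{\upgamma\circ\upalpha_2})=(Z,\scrA_{\upalpha_1})=\upsigma_1$. The genuine content is entirely imported, so I do not expect a serious obstacle; the two results doing the work are the disjointness of the chambers $\upvarphi_L(\mathbb{H}_+)$ (Proposition~\ref{chamber decomposition}), which recovers $L$, and the path-independence of the induced K-theory maps (Proposition~\ref{trivial K}), which is precisely what allows me to identify $\mathsf{F}_{\upalpha_1}$ with $\mathsf{F}_{\upalpha_2}$ \emph{without} the two paths being equal in $\dsG$. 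The only point I would state with care is that $\upsigma_j=(Z_j,\scrA_{\upalpha_j})$ is merely a choice of representative, so the argument is arranged to be independent of that choice except in the final, purely existential, production of $\upgamma$.
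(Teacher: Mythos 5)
Your proposal is correct and follows essentially the same route as the paper: $(\Leftarrow)$ from the fact that the $\Br$-action in \eqref{our action} preserves central charges (Proposition~\ref{trivial K}\eqref{trivial K 2}), and $(\Rightarrow)$ by combining Corollary~\ref{effect of p} with Proposition~\ref{chamber decomposition}\eqref{chamber decomposition 1} to force $L_1=L_2$ and then composing $\upgamma=\upalpha_1\circ\upalpha_2^{-1}\in\End_\dsG(C_+)$. The only (harmless) deviation is your detour through Proposition~\ref{trivial K}\eqref{trivial K 3} and the basis $\{\mathsf{F}[\scrS'_i]\}$ to deduce $Z_1=Z_2$, which the paper gets immediately from the observation that $p$ is the forgetful map followed by an isomorphism, so equality of $p$-values already means $Z_1$ and $Z_2$ agree on the basis $[\scrS_1],\hdots,[\scrS_n]$ of $\mathsf{G}_0(\Lambda_\con)$.
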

\begin{proof}
Note that ($\Leftarrow$) is clear since, by Proposition \ref{trivial K}\eqref{trivial K 2}, the action in \eqref{our action} of a pure braid does not effect the central charge of a stability condition. 

For ($\Rightarrow$), recall that by Corollary~\ref{arb point} we can assume that $\upsigma_1=(Z_1,\scrA_{\upalpha})$ and $\upsigma_2=(Z_2,\scrA_{\upbeta})$ where $\upalpha \in \Hom_\dsG(C_{L_1}, C_+)$ and $\upbeta \in \Hom_\dsG(C_{L_2},C_+)$. If $p(\upsigma_1)=p(\upsigma_2)$, then certainly $Z_1 =Z_2$ since $p$ is simply the forgetful map followed by an isomorphism.  

Furthermore, by Corollary~\ref{effect of p}, we see that $\upvarphi_{s(\upalpha)}(\mathbb{H}_+)\cap\upvarphi_{s(\upbeta)}(\mathbb{H}_+)\neq \emptyset$, since the intersection contains $p(\upsigma_1)=p(\upsigma_2)$.  Hence by Proposition~\ref{chamber decomposition}\eqref{chamber decomposition 1} it follows that $s(\upalpha)=s(\upbeta)$ and thus we can consider the composition $\upgamma = \upbeta \circ \upalpha^{-1} \in \End_\dsG( C_+)$. Then $F_\upgamma\in\Br$ and 
\begin{align*}
(F_\upgamma)_*(Z_1,\scrA_{\upalpha}) 
&=  (Z_1,\scrA_{\upgamma\circ \upalpha})\tag{by \eqref{our action} since $F_\upgamma\in\Br$}\\
&=(Z_1,\scrA_{\upbeta \circ \upalpha^{-1} \circ \upalpha})\tag{using $\upgamma = \upbeta \circ \upalpha^{-1}$ }\\
&=(Z_2,\scrA_{\upbeta}),\tag{since $Z_1=Z_2$ and $\upalpha^{-1} \circ \upalpha=\Id$} 
\end{align*}
proving the statement.
\end{proof}

\begin{cor} \label{homeomorphism}
The map $p \colon \Stab\Db(\Lambda_\con) \to \mathbb{C}^n \backslash \scrH_{\mathbb{C}}$ induces a homeomorphism
\[
\Stab\Db(\Lambda_\con)/\Br \to \mathbb{C}^n \backslash \scrH_{\mathbb{C}}.
\]
\end{cor}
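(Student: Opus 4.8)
The plan is to show that the continuous surjection $p$ descends to a continuous bijection on the quotient, and then to upgrade this to a homeomorphism by proving the descended map is open. Write $q\colon \Stab\Db(\Lambda_\con)\to \Stab\Db(\Lambda_\con)/\Br$ for the quotient map associated to the $\Br$-action of \eqref{our action}. The whole proof is carried by two earlier results: Lemma~\ref{disjoint pieces}, which identifies the fibres of $p$ with the $\Br$-orbits, and Corollary~\ref{image of p}, which supplies surjectivity; Theorem~\ref{stab thm} then furnishes the topological input needed for openness.

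First I would verify that $p$ descends and is bijective. By the $(\Leftarrow)$ direction of Lemma~\ref{disjoint pieces}, the map $p$ is constant on $\Br$-orbits, so it factors uniquely as $p=\bar p\circ q$ for a map $\bar p\colon \Stab\Db(\Lambda_\con)/\Br\to\bC^n\backslash\scrH_\bC$. The $(\Rightarrow)$ direction of the same lemma shows that distinct orbits have distinct images under $p$, so $\bar p$ is injective, while surjectivity of $\bar p$ onto $\bC^n\backslash\scrH_\bC$ is exactly Corollary~\ref{image of p}. Continuity of $\bar p$ is then automatic: $q$ is a quotient map and $\bar p\circ q=p$ is continuous, so $\bar p$ is continuous by the universal property.

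It remains to prove that $\bar p$ is open, for which I would show that both $p$ and $q$ are open and then combine them. The map $q$ is open because it is the orbit map of a group action: for an open set $\scrU$, its saturation $\bigcup_{F_\upbeta\in\Br} (F_\upbeta)_*(\scrU)$ is a union of open sets, hence open. The map $p$ is open because it is a local homeomorphism: by Theorem~\ref{stab thm} the forgetful map $\Stab\Db(\Lambda_\con)\to\Hom_\bC(\mathsf{G}_0(\Lambda_\con),\bC)$ is a local isomorphism onto an open subspace, and $p$ is this forgetful map post-composed with the fixed linear isomorphism $\Hom_\bC(\mathsf{G}_0(\Lambda_\con),\bC)\simto\bC^n$ from \eqref{forgetful}. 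Now for any open $V\subseteq\Stab\Db(\Lambda_\con)/\Br$, the preimage $q^{-1}(V)$ is open, and since $q$ is surjective we have $\bar p(V)=\bar p\bigl(q(q^{-1}(V))\bigr)=p\bigl(q^{-1}(V)\bigr)$, which is open because $p$ is. Hence $\bar p$ is a continuous open bijection, i.e.\ a homeomorphism.

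The only mildly delicate point is the openness of $p$. This rests on the observation that $\bC^n\backslash\scrH_\bC$ is an open subspace of $\bC^n$: since $p$ is a local homeomorphism onto its image inside $\bC^n$ and, by Corollary~\ref{image of p}, that image is precisely the open set $\bC^n\backslash\scrH_\bC$, the corestriction of $p$ to this subspace remains open. Everything else is formal category-theoretic bookkeeping once Lemma~\ref{disjoint pieces} and Corollary~\ref{image of p} are in hand.
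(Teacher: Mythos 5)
Your proposal is correct and takes essentially the same route as the paper: the paper likewise obtains the bijection from Corollary~\ref{image of p} (surjectivity) and Lemma~\ref{disjoint pieces} (fibres equal $\Br$-orbits), and then concludes via the quotient topology and the fact that $p$ is a local homeomorphism. Your write-up simply makes explicit the openness bookkeeping (openness of the orbit map $q$, openness of $p$ from Theorem~\ref{stab thm}, and the identity $\bar p(V)=p(q^{-1}(V))$) that the paper's one-line conclusion leaves implicit.
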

\begin{proof}
The map $p$ is surjective by Corollary~\ref{image of p}, and by Lemma~\ref{disjoint pieces} it induces the bijection in the statement.  The induced map is itself  a homeomorphism by the definition of the quotient topology, and the fact that $p$ is a local homeomorphism.
\end{proof}

The following is our main result.

\begin{cor}\label{contractible alg}
The map $p\colon\Stab\Db(\Lambda_\con)\to \mathbb{C}^n \backslash \scrH_{\mathbb{C}}$ is the universal cover, with Galois group $\Br$.  Furthermore, $\Stab\Db(\Lambda_\con)$ is contractible.
\end{cor}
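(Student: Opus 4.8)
The plan is to assemble the two immediately preceding corollaries, since essentially all of the substantive work has already been carried out. First I would recall from Corollary~\ref{covering map} that the quotient map $\Stab\Db(\Lambda_\con)\to\Stab\Db(\Lambda_\con)/\Br$ is the universal covering map with Galois group $\Br$, and that the total space $\Stab\Db(\Lambda_\con)$ is contractible. The latter already settles the final assertion of the corollary outright: it follows from the silting-discreteness of $\Lambda_\con$ \cite[4.12]{August1} together with the contractibility theorem for silting-discrete algebras \cite{PSZ}, and requires nothing further.

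For the covering statement, the key observation is that $p$ factors through the quotient. Indeed, by Lemma~\ref{disjoint pieces} the fibres of $p$ are precisely the $\Br$-orbits, so $p$ descends to the quotient, and Corollary~\ref{homeomorphism} identifies the induced map as a homeomorphism. Thus $p$ decomposes as
\[
\Stab\Db(\Lambda_\con)\to\Stab\Db(\Lambda_\con)/\Br\xrightarrow{\sim}\mathbb{C}^n\backslash\scrH_{\mathbb{C}},
\]
where the first arrow is the universal covering map of Corollary~\ref{covering map} and the second is the homeomorphism of Corollary~\ref{homeomorphism}. Post-composing a covering map with a homeomorphism of its base again yields a covering map, with the same total space, the same Galois group, and, since the total space is contractible hence simply connected, the same universality. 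Therefore $p$ is the universal cover with Galois group $\Br$, as claimed.

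There is no genuine obstacle at this final step: the corollary is a purely formal consequence of the results already in hand. The only point requiring a moment's care is verifying that the homeomorphism supplied by Corollary~\ref{homeomorphism} is literally the map induced by $p$, so that re-composing with the quotient projection recovers $p$ itself rather than merely a map in its homotopy class; but this is exactly how Corollary~\ref{homeomorphism} is stated, so the factorisation holds on the nose and the argument closes immediately.
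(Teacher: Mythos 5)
Your proposal is correct and matches the paper's own proof exactly: it composes the universal covering map of Corollary~\ref{covering map} with the homeomorphism of Corollary~\ref{homeomorphism}, and derives contractibility from silting-discreteness \cite[4.12]{August1} together with \cite{PSZ}. Your additional care in checking that the homeomorphism is literally the map induced by $p$ (via Lemma~\ref{disjoint pieces}) is a sound elaboration of what the paper leaves implicit, not a different route.
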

\begin{proof}
The first statement is obtained by composing the universal cover from Corollary~\ref{covering map} with the homeomorphism from Corollary~\ref{homeomorphism}. As already stated, the second part follows from \cite{PSZ} and \cite[4.12]{August1}.
\end{proof}

\section{Corollaries}\label{Section 5}

In this section we prove the five main corollaries stated in the introduction.  For an ADE root system, it is well-known \cite{Brieskorn} that $\uppi_1(\mathfrak{h}_{\mathrm{reg}}/W)$ is isomorphic to the associated ADE braid group.  Recall that the $K(\uppi,1)$-conjecture for ADE braid groups, which is already a theorem in this setting, asserts that the universal cover of $\mathfrak{h}_{\mathrm{reg}}/W$ is contractible.  

\begin{cor}\label{kpi1main}
The $K(\uppi,1)$-conjecture holds for all ADE braid groups.
\end{cor}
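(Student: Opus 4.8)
The plan is to exhibit, for each Dynkin type, a contraction algebra whose arrangement $\scrH$ is the \emph{entire} ADE root system, and then to push the universal cover of Theorem~\ref{contractible alg} down through the quotient $\mathfrak{h}_{\mathrm{reg}}\to\mathfrak{h}_{\mathrm{reg}}/W$. Everything hinges on feeding a concrete geometric input into the otherwise type-agnostic machinery of Section~\ref{Section 4}.

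First I would invoke the realization statement: for each of the finitely many ADE types there is a $3$-fold flopping contraction $f\colon X\to\Spec R$ whose associated simplicial arrangement $\scrH$ is precisely the root system of that type, so that $\mathbb{C}^n\backslash\scrH_{\mathbb{C}}=\mathfrak{h}_{\mathrm{reg}}$. This is the only place where the specific ADE combinatorics enters; one must know that the full root system, rather than a proper intersection subarrangement, is attained by an honest flop. Granting this, let $\Lambda_\con$ be the associated contraction algebra. By Theorem~\ref{contractible alg} the natural map
\[
p\colon\Stab\Db(\Lambda_\con)\longrightarrow\mathbb{C}^n\backslash\scrH_{\mathbb{C}}=\mathfrak{h}_{\mathrm{reg}}
\]
is the universal cover, and $\Stab\Db(\Lambda_\con)$ is contractible.

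Next I would form the composite with the quotient map $q\colon\mathfrak{h}_{\mathrm{reg}}\to\mathfrak{h}_{\mathrm{reg}}/W$. Since $W$ acts freely and properly discontinuously on the complement of its reflection hyperplanes, $q$ is a regular covering with deck group $W$; as $W$ is finite, the composite $q\circ p$ is again a covering map of the manifold $\mathfrak{h}_{\mathrm{reg}}/W$. Because $\Stab\Db(\Lambda_\con)$ is contractible it is simply connected, so $q\circ p$ is the universal cover of $\mathfrak{h}_{\mathrm{reg}}/W$, with contractible total space. Hence $\mathfrak{h}_{\mathrm{reg}}/W$ is a $K(\uppi,1)$ space, and since $\uppi_1(\mathfrak{h}_{\mathrm{reg}}/W)$ is the ADE braid group by Brieskorn~\cite{Brieskorn}, the conjecture follows for every ADE type. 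The hard part is not the topology, which is routine once Theorem~\ref{contractible alg} is in hand, but the realization step: verifying that each full ADE root system genuinely arises as the arrangement of some flopping contraction.
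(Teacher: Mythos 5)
Your proposal is correct and is essentially the paper's own argument: choose a flopping contraction realising the full ADE root system so that $\mathbb{C}^n\backslash\scrH_{\mathbb{C}}=\mathfrak{h}_{\mathrm{reg}}$, compose the universal cover of Corollary~\ref{contractible alg} with the finite regular cover $\mathfrak{h}_{\mathrm{reg}}\to\mathfrak{h}_{\mathrm{reg}}/W$, and conclude from the contractibility (hence simple connectivity) of $\Stab\Db(\Lambda_\con)$ that the composite is the universal cover with contractible total space. The realization step you flag as the hard part is exactly what the paper disposes of by citation, to \cite[\S3]{T08} or \cite[\S4.3]{KM}, where flopping contractions attaining the full ADE root system are constructed.
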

\begin{proof}
As in \cite[\S3]{T08} or \cite[\S4.3]{KM}, we may choose a flopping contraction for which the hyperplane arrangement $\scrH$ is an ADE root system $\mathfrak{h}$.  It is well known that the complexified complement $\scrX=\mathfrak{h}_{\mathrm{reg}}$.  Write $W$ for the Weyl group, which is finite, thus clearly the covering map $ \mathfrak{h}_{\mathrm{reg}}\to \mathfrak{h}_{\mathrm{reg}}/W$ has finite fibres. It follows that the composition 
\[
\Stab\Db(\Lambda_\con)\to \scrX=\mathfrak{h}_{\mathrm{reg}}\to \mathfrak{h}_{\mathrm{reg}}/W
\]
is then also a covering map.  As above, it is well-known that $\uppi_1(\mathfrak{h}_{\mathrm{reg}}/W)$ is the braid group. By Corollary~\ref{contractible alg}, the fact that $\Stab\Db(\Lambda_\con)$ is contractible implies that it is simply connected.  Hence the composition is the universal cover, and furthermore the universal cover is contractible.
\end{proof}

\begin{cor}\label{faith cont alg}
The homomorphism $\uppi_1(\scrX)\to\Auteq\Db(\Lambda_\con)$ sending $\upalpha\mapsto F_\upalpha$ is injective.
\end{cor}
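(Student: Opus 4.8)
The plan is to read off injectivity directly from the covering space structure established in Corollary~\ref{contractible alg}, rather than attempting any further algebra. By that corollary, $p\colon\Stab\Db(\Lambda_\con)\to\scrX$ is the universal cover with Galois group $\Br$, and $\Stab\Db(\Lambda_\con)$ is contractible, hence in particular simply connected. For any universal cover (equivalently, a regular cover with simply connected total space), the standard theory of covering spaces, e.g.\ \cite[1.39]{Hat}, provides a canonical isomorphism between the group of deck transformations and the fundamental group of the base, realised by lifting loops to the total space.

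First I would record that the deck transformations are exactly the maps $(F_\upalpha)_*$. For $\upalpha\in\End_\dsG(C_+)\cong\uppi_1(\scrX)$, Proposition~\ref{trivial K}\eqref{trivial K 2} gives $\mathsf{F}_\upalpha=\Id$, so $(F_\upalpha)_*$ preserves the central charge and therefore satisfies $p\circ(F_\upalpha)_*=p$; thus each $(F_\upalpha)_*$ is a deck transformation, and by the definition of $\Br$ together with Corollary~\ref{free and prop discont} these exhaust the Galois group. The content of the construction of the cover in Corollaries~\ref{free and prop discont}--\ref{homeomorphism}, where the $\Br$-action is defined through \eqref{our action}, is precisely that the canonical covering-theoretic isomorphism $\uppi_1(\scrX)\xrightarrow{\sim}\Br$ is the concrete map $\upalpha\mapsto(F_\upalpha)_*$.

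Granting this, the conclusion is immediate. Suppose $\upalpha\in\uppi_1(\scrX)$ lies in the kernel of $\upalpha\mapsto F_\upalpha$, so $F_\upalpha\cong\Id$ in $\Auteq\Db(\Lambda_\con)$. Since the induced map on stability conditions depends only on the isomorphism class of the functor, $(F_\upalpha)_*=(\Id)_*=\mathrm{id}$ on $\Stab\Db(\Lambda_\con)$. As $\upalpha\mapsto(F_\upalpha)_*$ is the canonical isomorphism $\uppi_1(\scrX)\xrightarrow{\sim}\Br$, and in particular injective, we conclude $\upalpha=1$. Hence $\upalpha\mapsto F_\upalpha$ is injective.

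The main obstacle is the middle step: confirming that the abstract covering-theoretic identification $\uppi_1(\scrX)\cong\Br$ coincides with the concrete homomorphism $\upalpha\mapsto(F_\upalpha)_*$, and is not merely an abstract isomorphism of groups. This compatibility is exactly what upgrades the surjection $\uppi_1(\scrX)\twoheadrightarrow\Br$ to an isomorphism (a surjection between abstractly isomorphic groups need not be injective in general), so it is the point at which care is required; it is encoded in the fact that the deck action underlying $p$ was defined via \eqref{our action}, with the trivial-central-charge input supplied by Proposition~\ref{trivial K}\eqref{trivial K 2}.
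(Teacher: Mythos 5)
Your proof is correct and takes essentially the same approach as the paper: the paper likewise reads injectivity directly off Corollary~\ref{contractible alg}, invoking the short exact sequence $1\to \uppi_1(\Stab\Db(\Lambda_\con))\to\uppi_1(\scrX)\to\Br\to 1$ of \cite[1.40(c)]{Hat} for the regular cover $p$ and then killing the kernel because the contractible total space has trivial fundamental group. The compatibility you rightly flag as the delicate point---that the covering-theoretic surjection $\uppi_1(\scrX)\to\Br$ coincides with the concrete homomorphism $\upalpha\mapsto F_\upalpha$ rather than being a mere abstract isomorphism---is exactly the step the paper handles by asserting that the map in the exact sequence ``as before takes $\upalpha\mapsto F_\upalpha$'', so your elaboration via deck transformations and \eqref{our action} is a more explicit rendering of the same argument, not a different one.
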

\begin{proof}
As in Sections~\ref{Section 3} and \ref{Section 4}, by definition $\Br$ is the image of this homomorphism.  Since $p$ is a regular covering map, as is standard \cite[1.40(c)]{Hat} there is a short exact sequence of groups
\begin{equation}
1\to \uppi_1(\Stab\Db(\Lambda_\con)) \to \uppi_1(\scrX) \to \Br\to 1\label{Hat ses}
\end{equation}
where $\uppi_1(\scrX) \to \Br$ as before takes $\upalpha\mapsto F_\upalpha$. However, since by Corollary~\ref{contractible alg} $\Stab\Db(\Lambda_\con)$ is contractible, its fundamental group is trivial.
\end{proof}

For $\upalpha\in\Hom_{\dsG}(C_L,C_+)$, set $T_\upalpha\colonequals F_{\upalpha}(\uEnd_R(L))$, which is necessarily a tilting complex for $\Lambda_\con$ since equivalences map tilting complexes to tilting complexes.
\begin{cor}\label{cont 1-1}
The map $\upalpha \mapsto T_\upalpha$ is a bijection from morphisms in the Deligne groupoid ending at $C_+$ to the set of basic one-sided tilting complexes of $\Lambda_{\con}$, up to isomorphism.
\end{cor}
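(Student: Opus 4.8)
The plan is to realise $\upalpha\mapsto T_\upalpha$ as the composite of the assignment $\upalpha\mapsto\scrA_\upalpha$ of hearts with the (bijective) passage from hearts to tilting complexes, and thereby to reduce everything to showing that $\upalpha\mapsto\scrA_\upalpha$ is a bijection onto the set of bounded hearts of $\Db(\Lambda_\con)$. Two preliminary remarks make this precise. First, by Subsection~\ref{def of Falpha} the functor $F_\upalpha$, and hence $T_\upalpha=F_\upalpha(\uEnd_R(L))$, depends only on the class of $\upalpha$ in $\dsG$, so the map is well-defined on morphisms of the Deligne groupoid. Second, since $\Lambda_\con$ is symmetric and silting-discrete, silting equals tilting and all bounded hearts have finite length \cite{PSZ}; the Koenig--Yang correspondence \cite{KY} then identifies isomorphism classes of basic one-sided tilting complexes of $\Lambda_\con$ with bounded hearts, via $T\mapsto H(T)\colonequals\{x\mid\Hom_{\Db(\Lambda_\con)}(T,x[i])=0\text{ for all }i\neq0\}$. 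A one-line computation using that $F_\upalpha$ is an equivalence gives $H(T_\upalpha)=F_\upalpha(\mod\uEnd_R(L))=\scrA_\upalpha$, so $T_\upalpha$ is precisely the tilting complex corresponding to $\scrA_\upalpha$ under this bijection. Hence $\upalpha\mapsto T_\upalpha$ is a bijection if and only if $\upalpha\mapsto\scrA_\upalpha$ is a bijection onto bounded hearts.

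Surjectivity of $\upalpha\mapsto\scrA_\upalpha$ is exactly Proposition~\ref{hearts known}. For injectivity, suppose $\scrA_\upalpha=\scrA_\upbeta$ with $\upalpha\in\Hom_\dsG(C_{L_1},C_+)$ and $\upbeta\in\Hom_\dsG(C_{L_2},C_+)$, and I would first show the two sources agree. Because $\scrA_\upalpha$ is a finite-length heart, its simple classes form a basis of $\mathsf{G}_0(\Lambda_\con)$, so there is a central charge $Z$ with $(Z,\scrA_\upalpha)\in\Stab\Db(\Lambda_\con)$ by Corollary~\ref{arb point}. Viewing this point once through $\upalpha$ and once through $\upbeta$, Corollary~\ref{effect of p} places $p(Z,\scrA_\upalpha)$ in both $\upvarphi_{L_1}(\bH_+)$ and $\upvarphi_{L_2}(\bH_+)$. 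These cones therefore intersect, and Proposition~\ref{chamber decomposition}\eqref{chamber decomposition 1} forces $s(\upalpha)=s(\upbeta)$, i.e.\ $L_1=L_2=:L$.

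With a common source in hand, set $g\colonequals\upbeta\circ\upalpha^{-1}\in\End_\dsG(C_+)$, so that $F_g\in\Br$ and, by the action \eqref{our action}, $F_g(\scrA_\upalpha)=\scrA_{g\circ\upalpha}=\scrA_\upbeta=\scrA_\upalpha$; that is, $F_g$ restricts to a self-equivalence of $\scrA_\upalpha$. At this point I would rerun the argument in the proof of Corollary~\ref{free and prop discont}: conjugating by $F_\upalpha$ shows $F_{\upalpha^{-1}\upbeta\upalpha}$ restricts to a self-equivalence of $\mod\uEnd_R(L)$, hence sends the free module to a basic tilting module, which by symmetry of $\uEnd_R(L)$ must be the free module itself; Theorem~\ref{functor is identity} (applied to $\uEnd_R(L)$) then yields $F_{\upalpha^{-1}\upbeta\upalpha}\cong\Id$ and so $F_g\cong\Id$. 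Finally, faithfulness of $\uppi_1(\scrX)\cong\End_\dsG(C_+)\to\Auteq\Db(\Lambda_\con)$, namely Corollary~\ref{faith cont alg}, promotes $F_g\cong\Id$ to $g=1$, whence $\upbeta=g\circ\upalpha=\upalpha$. This proves injectivity, and with it the corollary.

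I expect the injectivity to be the crux, and specifically the need to separate two logically independent sources of non-injectivity: distinct paths with \emph{different} sources could a priori yield the same heart, and distinct paths with the \emph{same} source differ by a loop in $\End_\dsG(C_+)$. The first is handled purely combinatorially by the disjointness of the cones $\upvarphi_L(\bH_+)$ in Proposition~\ref{chamber decomposition}, whereas the second is exactly where the two deep inputs of the paper are used: the rigidity statement Theorem~\ref{functor is identity}, proved by a Fourier--Mukai argument, and the topological faithfulness in Corollary~\ref{faith cont alg}. By contrast, the well-definedness and the dictionary between tilting complexes and hearts are formal once \cite{KY} is invoked.
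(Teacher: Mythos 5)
Your proof is correct and takes essentially the same route as the paper: the paper likewise passes from $T_\upalpha\cong T_\upbeta$ to $\scrA_\upalpha=\scrA_\upbeta$, pins down the common source via Corollary~\ref{effect of p} and Proposition~\ref{chamber decomposition}\eqref{chamber decomposition 1}, and finishes with Theorem~\ref{functor is identity} plus Corollary~\ref{faith cont alg} --- applied directly to $\uEnd_R(L)$ and the loop $\upbeta^{-1}\circ\upalpha\in\End_\dsG(C_L)$, which makes your conjugation detour through the argument of Corollary~\ref{free and prop discont} unnecessary, since $F_{\upbeta^{-1}\circ\upalpha}(\uEnd_R(L))\cong\uEnd_R(L)$ already follows from the isomorphism of tilting complexes. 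The remaining differences are cosmetic (your explicit Koenig--Yang repackaging, versus the paper citing \cite[3.16(2)]{August1} for surjectivity), apart from one harmless notational slip: with your $g=\upbeta\circ\upalpha^{-1}$, the conjugate should be written $F_{\upalpha^{-1}\circ g\circ\upalpha}=F_{\upalpha^{-1}\circ\upbeta}$, as the symbol $\upalpha^{-1}\upbeta\upalpha$ does not compose when both $\upalpha,\upbeta\colon C_L\to C_+$.
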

\begin{proof}
Surjectivity is already known. Indeed, since $\Lambda_\con$ is silting-discrete, by \cite[3.16(2)]{August1} every standard derived equivalence from $\Db(\Lambda_{\con})$ is, up to algebra automorphism, isomorphic to $F_\upbeta$ for some $\upbeta\in \Hom_{\dsG}(C_+,C_L)$.  In particular every one-sided tilting complex for $\Lambda_{\con}$ is isomorphic to $F_{\upbeta}^{-1}(\uEnd_R(L))$ for some $\upbeta \in\Hom_{\dsG}(C_+,C_L)$. Since $\upbeta^{-1} \in\Hom_{\dsG}(C_L,C_+)$ and $F_{\upbeta}^{-1}=F_{\upbeta^{-1}}$, surjectivity of the map follows. 

The content is that the map is also injective.  Suppose that $\upalpha\in\Hom_{\dsG}(C_L,C_+)$ and $\upbeta \in\Hom_{\dsG}(C_M,C_+)$ are such that $T_\upalpha \cong T_\upbeta$, where $L,M \in \Mut_0(N)$. Then, by definition, 
\begin{align}
F_{\upalpha}(\uEnd_R(L)) \cong F_{\upbeta}(\uEnd_R(M)) \label{iso tilting}
\end{align}
and hence $\scrA_\upalpha = \scrA_\upbeta$. In particular, choosing any central charge $Z$, we have $(Z, \scrA_\upalpha) = (Z, \scrA_\upbeta)$ and hence $p(Z, \scrA_\upalpha)=p(Z, \scrA_\upbeta)$. But Corollary~\ref{effect of p} then implies that $\upvarphi_L(\mathbb{H}_+)$ and $\upvarphi_M(\mathbb{H}_+)$ intersect, which by Proposition~\ref{chamber decomposition} implies that $L \cong M$.

Set $\CB\colonequals\uEnd_R(L)$.  Applying $F_{\upbeta}^{-1}=F_{\upbeta^{-1}}$ to \eqref{iso tilting} gives $F_{\upbeta^{-1} \circ \upalpha}(\CB) \cong \CB$.  Thus by applying Theorem~\ref{functor is identity} to the composition $\upbeta^{-1} \circ \upalpha \in \End_\dsG( C_L)$ we deduce that there is an isomorphism $F_{\upbeta^{-1} \circ \upalpha} \cong \Id$. Corollary~\ref{faith cont alg} applied to the contraction algebra $\CB$ then shows that $\upbeta^{-1} \circ \upalpha$ must be the identity, and hence $\upalpha =\upbeta$ in the Deligne groupoid. 
\end{proof}
\begin{rem}
If we instead assign to a path $\upalpha \colon C_+ \to C_L$ the tilting complex $F_{\upalpha}^{-1}(\uEnd_R(L))$,  we equivalently obtain a bijection between the paths in the Deligne groupoid that \emph{start} at $C_+$ and basic one-sided tilting complexes of $\Lambda_{\con}$.
\end{rem}

\begin{cor}\label{faithful geo cor}
The homomorphism $\uppi_1(\scrX)\to\Auteq\Db(\coh X)$ sending $\upalpha\mapsto \Flop_\upalpha$ is injective.
\end{cor}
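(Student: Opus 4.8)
The plan is to show that the kernel of the geometric homomorphism is contained in the kernel of the algebraic homomorphism $\upalpha\mapsto F_\upalpha$, and then to invoke Corollary~\ref{faith cont alg}. So I would start with some $\upalpha\in\End_{\dsG}(C_+)\cong\uppi_1(\scrX)$ satisfying $\Flop_\upalpha\cong\Id$, and aim to deduce that $\upalpha$ is trivial in $\uppi_1(\scrX)$.

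First I would reproduce, for this $\upalpha$, the commutative diagram appearing in the proof of Theorem~\ref{functor is identity}: its left-hand square is the instance of \eqref{commutes for alpha} with $\CA=\CB=\Lambda_\con$ and $\AA=\BB=\Lambda$, and its right-hand square is obtained by iterating the right-hand square of Theorem~\ref{intertwinement}. This diagram is valid for every $\upalpha\in\End_{\dsG}(C_+)$, with no hypothesis on $F_\upalpha$. Since $-\otimes^{\bf L}_{\Lambda}\scrV$ is an equivalence and $\Flop_\upalpha\cong\Id$, commutativity of the right-hand square forces $\Upphi_\upalpha\cong\Id$. Feeding this into the left-hand square gives $\res\circ F_\upalpha\cong\res$ as functors $\Db(\Lambda_\con)\to\Db(\Lambda)$.

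The one subtlety I expect to be the crux is that $\res$ is \emph{not} fully faithful on all of $\Db(\Lambda_\con)$, so $\res\circ F_\upalpha\cong\res$ does not instantly yield $F_\upalpha\cong\Id$. However, I only need this comparison at the free module. Since $\res$ is exact, hence t-exact, and is fully faithful on module categories, evaluating $\res\circ F_\upalpha\cong\res$ at $\Lambda_\con$ shows first that $F_\upalpha(\Lambda_\con)$ is concentrated in degree zero, i.e.\ is a module, and then that this module is isomorphic to $\Lambda_\con$. Thus $F_\upalpha(\Lambda_\con)\cong\Lambda_\con$, which is exactly the hypothesis of Theorem~\ref{functor is identity}; applying it gives $F_\upalpha\cong\Id$.

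Finally, $F_\upalpha\cong\Id$ means $\upalpha$ lies in the kernel of the homomorphism $\uppi_1(\scrX)\to\Auteq\Db(\Lambda_\con)$, which is injective by Corollary~\ref{faith cont alg}. Hence $\upalpha$ is trivial in $\uppi_1(\scrX)$, and so the geometric homomorphism $\upalpha\mapsto\Flop_\upalpha$ is injective, as required. The point worth emphasising is that once Theorem~\ref{functor is identity} and Corollary~\ref{faith cont alg} are in hand, the geometric faithfulness is purely formal: the intertwining diagram transports triviality of $\Flop_\upalpha$ down to triviality of $F_\upalpha$, and all remaining content is borrowed from the contraction algebra.
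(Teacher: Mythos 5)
Your proposal is correct and takes essentially the same route as the paper's own proof: a kernel element gives $\Flop_\upalpha\cong\Id$, the intertwining diagram from Theorem~\ref{intertwinement} transports this first to $\Upphi_\upalpha\cong\Id$ and then to $F_\upalpha(\Lambda_\con)\cong\Lambda_\con$, after which Theorem~\ref{functor is identity} and Corollary~\ref{faith cont alg} conclude. The only difference is cosmetic: where you argue directly that $\res$ is t-exact and fully faithful on module categories to pass from $\res\circ F_\upalpha\cong\res$ to $F_\upalpha(\Lambda_\con)\cong\Lambda_\con$ (a valid argument), the paper delegates exactly this step to the citation \cite[6.6]{August2}.
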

\begin{proof}
Suppose that $\upalpha$ belongs to the kernel, so $\Flop_\upalpha=\Id$. Since $\Upphi_\upalpha$ is functorially isomorphic to $\Flop_\upalpha$ after tilting by Theorem~\ref{intertwinement}, necessarily  $\Upphi_\upalpha\cong\Id$.  The left-hand part of the commutative digram in Theorem~\ref{intertwinement}  then implies that  $F_\upalpha(\Lambda_\con)$ maps, under restriction of scalars, to $\Lambda_\con$.  It follows that $F_\upalpha(\Lambda_\con)\cong\Lambda_\con$ in $\Db(\Lambda_\con)$, see e.g.\ \cite[6.6]{August2}. By Lemma~\ref{functor is identity}, there is a functorial isomorphism $F_\upalpha\cong\Id$.  By Corollary~\ref{faith cont alg} we see that $\upalpha=1$, proving the statement.
\end{proof}

\begin{cor}\label{StabC contractible}
With the notation as in the introduction, $\cStab{}\scrC$ is contractible.
\end{cor}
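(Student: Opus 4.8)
The plan is to realise $\cStab{}\scrC$ as the universal cover of $\scrX=\bC^n\backslash\scrH_\bC$, and then to invoke uniqueness of universal covers together with the already-established contractibility of $\Stab\Db(\Lambda_\con)$ from Corollary~\ref{contractible alg}. The whole argument is purely topological, with the genuine content imported from the faithfulness result Corollary~\ref{faithful geo cor}.

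First I would recall from \cite{HW2} the regular covering map $q\colon\cStab{}\scrC\to\scrX$, together with the single fact about $\cStab{}\scrC$ that the argument consumes: that the Galois group $G$ of $q$ is the image of the geometric action $\uppi_1(\scrX)\to\Auteq\Db(\coh X)$, $\upalpha\mapsto\Flop_\upalpha$. Being a connected component of a space of stability conditions, $\cStab{}\scrC$ is connected, so the regular cover $q$ fits into the short exact sequence
\[
1\to \uppi_1(\cStab{}\scrC)\to\uppi_1(\scrX)\to G\to 1,
\]
in which $\uppi_1(\cStab{}\scrC)$ injects into $\uppi_1(\scrX)$ and $G$ is the quotient.

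Next I would feed in Corollary~\ref{faithful geo cor}: the homomorphism $\uppi_1(\scrX)\to\Auteq\Db(\coh X)$ sending $\upalpha\mapsto\Flop_\upalpha$ is injective, so the surjection $\uppi_1(\scrX)\to G$ onto its image is in fact an isomorphism. By the exact sequence above this forces $\uppi_1(\cStab{}\scrC)=1$, so the connected space $\cStab{}\scrC$ is simply connected, i.e.\ it is a universal cover of $\scrX$. Finally, by Corollary~\ref{contractible alg} the space $\Stab\Db(\Lambda_\con)$ is also a universal cover of $\scrX$, and it is contractible; since universal covers are unique up to homeomorphism, $\cStab{}\scrC$ is homeomorphic to $\Stab\Db(\Lambda_\con)$ and hence contractible.

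The routine steps are the exact sequence and the uniqueness of universal covers. The only point requiring care is the bookkeeping that correctly matches the Galois group $G$ of the \cite{HW2} cover with the image of the flop action $\upalpha\mapsto\Flop_\upalpha$, since it is exactly this identification that lets the faithfulness of Corollary~\ref{faithful geo cor} be read as the statement that $q$ is universal; once this is in place, everything else is formal.
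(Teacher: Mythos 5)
Your proposal is correct and follows the paper's own proof essentially verbatim: both cite the regular cover $\cStab{}\scrC\to\scrX$ from \cite{HW2} with Galois group the image of $\upalpha\mapsto\Flop_\upalpha$, use Corollary~\ref{faithful geo cor} together with the short exact sequence \eqref{Hat ses} to conclude $\uppi_1(\cStab{}\scrC)$ is trivial so the cover is universal, and then invoke uniqueness of universal covers and Corollary~\ref{contractible alg}. No gaps; your remark on carefully matching the Galois group of the \cite{HW2} cover with the image of the flop action is exactly the one place the paper also leans on prior results about $\cStab{}\scrC$.
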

\begin{proof}
By \cite{HW2} there is a regular covering map $\cStab{}\scrC\to\scrX$ with Galois group $G$ equal to the image of $\uppi_1(\scrX)\to\Auteq\Db(\coh X)$.  But by Corollary~\ref{faithful geo cor}, the map $\uppi_1(\scrX)\to G$ is an isomorphism.  By the corresponding version of \eqref{Hat ses}, $\uppi_1(\cStab{}\scrC)$ is trivial and so the cover is universal.  Universal covers are unique, so by Corollary~\ref{contractible alg} it follows that $\cStab{}\scrC$ is contractible.
\end{proof}

\end{document}